%%-------BEGIN HERE---------
\documentclass[reqno, 12pt]{amsbook}
\usepackage{amsthm,amsmath,amsfonts,amssymb,amscd,mathrsfs,graphics}
\usepackage{txfonts,pxfonts}
\usepackage{hyperref,supertabular}
\usepackage{tikz}
\usepackage{stackrel}
\input xy
\xyoption{all}
\usepackage{booktabs}
\usepackage[disable]{todonotes}

%%---- DOCUMENT FORMATTING ---
%\textheight23.9cm
%\textwidth16cm
%\topmargin-1cm
%\oddsidemargin0cm
%\evensidemargin0cm
\newfont{\bbb}{msbm10 scaled\magstep1} % Blackboardbold for 12pt article
\newfont{\bbbsub}{msbm10}     % Blackboardbold for subscripts for 12pt article
 % script O
 % script G
 % script E
 % script F
\newcommand{\M}{\overline{\mathcal{M}}} % moduli space of stable curves/maps
 % script H
 % script Q
 % script A
 % bb A
 % script Z
 % projective space
 % bb E
 % bb B
 % underlined U
 % bb T
 % script F
\newcommand{\D}{\mathcal{D}} % script D
  	% curly R?

%%----Emily's bb commands----
\newcommand{\C}{\ensuremath{\mathbb{C}}}

\newcommand{\Z}{\ensuremath{\mathbb{Z}}}
\newcommand{\Q}{\ensuremath{\mathbb{Q}}}
\newcommand{\R}{\ensuremath{\mathbb{R}}}
\renewcommand{\P}{\ensuremath{\mathbb{P}}}
\renewcommand{\O}{\ensuremath{\mathcal{O}}}

\renewcommand{\d}{\ensuremath{\partial}}
\newcommand{\inv}{^{-1}}
\newcommand{\CCR}{\tb{CCR} }
\newcommand{\id}{\ensuremath{\textrm{id}}}

%----These are shortcuts for writing sums over basis elements
\newcommand{\teau}{\tilde{e}^\al}
\newcommand{\eau}{e^\al}
\newcommand{\tead}{\tilde{e}_\al}
\newcommand{\ead}{e_\al}

\newcommand{\ebu}{e^\beta}

\newcommand{\ebd}{e_\beta}

\newcommand{ \qa}{ {q}^\al}

\newcommand{ \pa}{ {p}_\al}
\newcommand{ \tqb}{ {\tilde{q}}^\beta}
\newcommand{ \qb}{ {q}^\beta}
\newcommand{ \tpb}{ {\tilde{p}}_\beta}
\newcommand{ \pb}{ {p}_\beta}

%%-----ABBREVIATIONS ---------
% In the following, our abbreviations:

\newcommand{\del}{\partial}

\newcommand{\al}{\alpha}
\newcommand{\be}{\beta}

\newcommand{\ba}{\begin{array}}

\newcommand{\ea}{\end{array}}

\newcommand{\tb}{\textbf}

%%---- MATH OPERATORS---------

\DeclareMathOperator{\Aut}{Aut}
\DeclareMathOperator{\Sp}{Sp}
\DeclareMathOperator{\Res}{Res}
\DeclareMathOperator{\Id}{Id}
\DeclareMathOperator{\End}{End}

%--------OTHER CONVENIENCES -----
 % norm
  	% Notation for the set of graphs
      	% defines a set
\newcommand{\pairing}[1]{\left\langle#1\right\rangle}

\newcommand{\mytilde}{{\raise.17ex\hbox{$\scriptstyle\mathtt{\sim}$}}}

%------ Theorems, etc. -------------------

\newtheorem{thm}{Theorem}[chapter]

\newtheorem{prop}[thm]{Proposition}
\newtheorem{crl}[thm]{Corollary}
\newtheorem{conj}[thm]{Conjecture}

%------- Definitions, etc. --------------------
\theoremstyle{definition}
\newtheorem{rem}[thm]{Remark}

\newtheorem{df}[thm]{Definition}
\newtheorem{ex}[thm]{Example}

\theoremstyle{remark}

%% BEGINNING OF DOCUMENT
\begin{document}

\pagestyle{empty}

\title{Geometric Quantization with Applications to Gromov-Witten Theory}
\author{Emily Clader, Nathan Priddis, and Mark Shoemaker}

\maketitle
\thispagestyle{empty}

\setcounter{chapter}{-1}

\noindent The notes that follow are expository, and the authors make no claim to originality of any of the material appearing in them.

\vspace{4in}
\noindent
Please send questions or comments to:\\
\\
\begin{tabular}{ll}
Emily Clader & eclader@umich.edu\\
Nathan Priddis & priddisn@umich.edu\\
%Yefeng Shen & yfschen@umich.edu\\
Mark Shoemaker & markashoe@gmail.com
\end{tabular}

\tableofcontents

\pagestyle{headings}
\setcounter{page}{1}
\pagenumbering{arabic}

\chapter*{Preface}
The following notes were prepared for the ``IAS Program on Gromov-Witten Theory and Quantization'' held jointly by the Department of Mathematics and the Institute for Advanced Study at the Hong Kong University of Science and Technology in July 2013.  Their primary purpose is to introduce the reader to the machinery of geometric quantization with the ultimate goal of computations in Gromov-Witten theory.

First appearing in this subject in the work of Alexander Givental and his students, quantization provides a powerful tool for studying Gromov-Witten-type theories in higher genus.  For example, if the quantization of a symplectic transformation matches two total descendent potentials, then the original symplectic transformation matches the Lagrangian cones encoding their genus-zero theories; we discuss this statement in detail in Section \ref{semiclassical}.  Moreover, according to Givental's Conjecture (see Section \ref{Givental}), the converse is true in the semisimple case.  Thus, if one wishes to study a semisimple Gromov-Witten-type theory, it is sufficient to find a symplectic transformation identifying its genus-zero theory with that of a finite collection of points, which is well-understood.  The quantization of this transformation will carry all of the information about the higher-genus theory in question.

In addition, quantization is an extremely useful combinatorial device for organizing information.  Some of the basic properties of Gromov-Witten theory, for instance, can be succinctly expressed in terms of equations satisfied by quantized operators acting on the total descendent potential.  To give another example, even if one is concerned only with genus zero, the combinatorics of expanding the relation between two theories into a statement about their generating functions can be unmanageable, but when it is expressed via quantization this unwieldy problem obtains a clean expression.  The relationship between a twisted theory and its untwisted analogue, discussed in Section \ref{twistedtheory}, is a key instance of this phenomenon.

At present, there are very few references on the subject of quantization as it is used in this mathematical context.  While a number of texts exist (such as \cite{Blau}, \cite{Echeverria}, and \cite{Ritter}), these tend to focus on the quantization of finite-dimensional, topologically non-trivial symplectic manifolds.  Accordingly, they are largely devoted to explaining the structures of polarization and prequantization that one must impose on a symplectic manifold before quantizing, and less concerned with explicit computations.  These precursors to quantization are irrelevant in applications to Gromov-Witten theory, as the symplectic manifolds one must quantize are simply symplectic vector spaces.  However, other technical issues arise from the fact that the symplectic vector spaces in Gromov-Witten theory are typically infinite-dimensional.  We hope that these notes will fill a gap in the existing literature by focusing on computational formulas and addressing the complications specific to Givental's set-up.

The structure of the notes is as follows.  In Chapter 0, we give a brief overview of preliminary material on symplectic geometry and the method of Feynman diagram expansion.  We then turn in Chapter 1 to a discussion of quantization of finite-dimensional vector spaces.  We obtain formulas for the quantizations of functions on such vector spaces by three main methods: direct computation via the canonical commutation relations (which may also be expressed in terms of quadratic Hamiltonians), Fourier-type integrals, and Feynman diagram expansion.  All three methods yield equivalent results, but this diversity of derivations is valuable in pointing to various generalizations and applications.  We then include an interlude on basic Gromov-Witten theory.  Though this material is not strictly necessary until Chapter 4, %it is worth bearing in mind, as 
it provides motivation and context for the material that follows.

The third chapter of the book is devoted to quantization of infinite-dimensional vector spaces, such as arise in applications to Gromov-Witten theory.  As in the finite-dimensional case, formulas can be obtained via quadratic Hamiltonians, Fourier integrals, or Feynman diagrams, and for the most part the computations mimic those of the first part.  The major difference in the infinite-dimensional setting though, is that issues of convergence arise, which we make an attempt to discuss whenever they come up.  Finally, in Chapter 4, we present several of the basic equations of Gromov-Witten theory in the language of quantization, and mention a few of the more significant appearances of quantization in the subject.

\subsection*{Acknowledgements}
The authors are indebted to Huai-Liang Chang, Wei-Ping Li, and Yongbin Ruan for organizing the workshop at which these notes were presented.  Special thanks are due to Yongbin Ruan for his constant guidance and support, as well as to Yefeng Shen for working through the material with us and assisting in the creation of these notes.  Y.P. Lee and Xiang Tang both gave extremely helpful talks at the RTG Workshop on Quantization organized by the authors in December 2011 at the University of Michigan.  Useful comments on earlier drafts were provided by Pedro Acosta and Weiqiang He.  A number of existing texts were used in the authors' learning process; we have attempted to include references to the literature throughout these notes, but we apologize in advance for any omissions. This work was supported in part by FRG grant DMS 1159265 RTG and NSF grant 1045119 RTG.

\chapter{Preliminaries}

Before we begin our study of quantization, we will give a quick overview of some of the prerequisite background material.  First, we review the basics of symplectic vector spaces and symplectic manifolds.  This material will be familiar to most of our mathematical audience, but we collect it here for reference and to establish notational conventions; for more details, see \cite{Cannas} or \cite{McDuff}.  Less likely to be familiar to mathematicians is the material on Feynman diagrams, so we cover this topic in more detail.  The chapter concludes with the statement of Feynman's theorem, which will be used later to express certain integrals as combinatorial summations.  The reader who is already experienced in the methods of Feynman diagram expansion is encouraged to skip directly to Chapter 1.

\section{Basics of symplectic geometry}

\subsection{Symplectic vector spaces}

A {\bf symplectic vector space} $(V,\omega)$ is a vector space $V$ together with a nondegenerate skew-symmetric bilnear form $\omega$.  We will often denote $\omega(v,w)$ by $\langle v, w\rangle$.

One consequence of the existence of a nondegenerate bilinear form is that $V$ is necessarily even-dimensional.  The standard example of a real symplectic vector space is $\R^{2n}$, with the symplectic form defined in a basis $\{e^{\alpha},e_{\beta}\}_{1 \leq \alpha, \beta \leq n}$ by
\begin{equation}\label{symp_basis}
\pairing{e^{\alpha}, e^{\beta}} = \pairing{e_{\alpha}, e_{\beta}} = 0,\quad \pairing{e^{\alpha}, e_{\beta}} = \delta^{\alpha}_{\beta}.
\end{equation}

In other words, $\langle \; , \; \rangle$ is represented by the (skew-symmetric) matrix
\[
J=\left(\begin{matrix}
0 & I\\
-I & 0
\end{matrix}\right).
\]
In fact, any finite-dimensional symplectic vector space admits a basis in which the symplectic form is expressed this way.  Such a basis is called a {\bf symplectic basis}, and the corresponding coordinates are known as {\bf Darboux coordinates}.

If $U \subset V$ is a linear subspace of a symplectic vector space $(V, \omega)$, then the {\bf symplectic orthogonal} $U^{\omega}$ of $U$ is
\[U^{\omega} = \{ v \in V \; | \; \omega(u,v) = 0 \text{ for all } u \in U\}.\]
One says $U$ is {\bf isotropic} if $U \subset U^{\omega}$ and {\bf Lagrangian} if $U = U^{\omega}$, which implies in particular that, if $V$ is finite-dimensional, $\text{dim}(U) = \frac{1}{2}\text{dim}(V)$. 

A {\bf symplectic transformation} between symplectic vector spaces $(V, \omega)$ and $(V', \omega')$ is a linear map $\sigma: V \rightarrow V'$ such that
\[\omega'( \sigma(v), \sigma(w) ) = \omega( v, w ).\]
In what follows, we will mainly be concerned with the case $V = V'$, and it will be useful to express the symplectic condition on a linear endomorphism $\sigma: V \rightarrow V$ in terms of matrix identities in Darboux coordinates.  Choose a symplectic basis for $V$ and express $\sigma$ in this basis via the matrix
\[
\sigma=\left(\begin{matrix}
A & B \\
C & D
\end{matrix}\right).
\]
Then $\sigma$ is symplectic if and only if $\sigma^TJ\sigma=J$, which in turn holds if and only if the following three identities are satisfied:
\begin{align}\label{smpl}
A^TB&=B^TA\\
C^TD&=D^TC\\
\label{smpl3}
A^TD-&B^TC=I.
\end{align}
Using these facts, one obtains a convenient expression for the inverse:
\[
\sigma^{-1}=\left(\begin{matrix}
D^T & -B^T \\
-C^T & A^T
\end{matrix}\right).
\]
An invertible matrix satisfying (\ref{smpl}) - (\ref{smpl3}) is known as a {\bf symplectic matrix}, and the group of symplectic matrices is denoted $\Sp(2n, \R)$.

\subsection{Symplectic manifolds}

Symplectic vector spaces are the simplest examples of the more general notion of a {\bf symplectic manifold}, which is a smooth manifold equipped with a closed nondegenerate two-form, called a {\bf symplectic form}.

In particular, such a $2$-form makes the tangent space $T_pM$ at any point $p \in M$ into a symplectic vector space.  Just as every symplectic vector space is isomorphic to $\R^{2n}$ with its standard symplectic structure, Darboux's Theorem states that every symplectic manifold is locally isomorphic to $\R^{2n} = \{(x_1, \ldots, x_n, y_1, \ldots, y_n)\}$ with the symplectic form $\omega = \sum_{i=1}^n dx_i \wedge dy_i$.

Perhaps the most important example of a symplectic manifold is the cotangent bundle.  Given any smooth manifold $N$ (not necessarily symplectic), there is a canonical symplectic struture on the total space of $T^*N$.  To define the symplectic form, let $\pi: T^*N \rightarrow N$ be the projection map.  Then one can define a one-form $\lambda$ on $T^*N$ by setting
\[
\lambda|_{\xi_x} = \pi^*(\xi_x)
\]
for any cotangent vector $\xi_x \in T_x^*N \subset T^*N$.  This is known as the {\bf tautological one-form}.  The {\bf canonical symplectic form} on $T^*N$ is $\omega = -d\lambda$.  The choice of sign makes the canonical symplectic structure agree with the standard one in the case where $N = \R^n$, which sits inside of $T^*N \cong \R^{2n}$ as $\text{Span}\{e^1, \ldots, e^n\}$ in the basis notation used previously.

In coordinates, the tautological one-form and canonical two-form appear as follows.  Let $q_1, \ldots, q_n$ be local coordinates on $N$.  Then there are local coordinates on $T^*N$ in which a point in the fiber over $(q_1, \ldots, q_n)$ can be expressed as a local system of coordinates $(q_1, \ldots, q_n, p^1, \ldots, p^n)$, and in these coordinates,
\[
\lambda = \sum_{\alpha} p^{\alpha} dq_{\alpha}
\]
and
\[
\omega = \sum_{\alpha} dq_{\alpha} \wedge dp^{\alpha}.
\]

The definitions of isotropic and Lagrangian subspaces generalize to symplectic manifolds, as well. A submanifold of a symplectic manifold is {\bf isotropic} if the restriction of the symplectic form to the submanifold is zero.  An isotropic submanifold is {\bf Lagrangian} if its dimension is as large as possible---namely, half the dimension of the ambient manifold.

For example, in the case of the symplectic manifold $T^*N$ with its canonical symplectic form, the fibers of the bundle are all Lagrangian submanifolds, as is the zero section.  Furthermore, if $\mu: N \rightarrow T^*N$ is a closed one-form, the graph of $\mu$ is a Lagrangian submanifold.  More precisely, for any one-form $\mu$, one can define
\[X_{\mu} = \{(x, \mu(x)) \; | \; x \in N\} \subset T^*N,\]
and $X_{\mu}$ is Lagrangian if and only if $\mu$ is closed.  In case $N$ is simply-connected, this is equivalent to the requirement that $\mu = df$ for some function $f$, called a {\bf generating function} of the Lagrangian submanifold $X_{\mu}$.

Finally, the notion of symplectic transformation generalizes in an obvious way.  Namely, given symplectic manifolds $(M, \omega_1)$ and $(N, \omega_2)$ a {\bf symplectomorphism} is a smooth map $f: M \rightarrow N$ such that $f^*\omega_2 = \omega_1$.

\section{Feynman diagrams}

The following material is drawn mainly from \cite{Etingof}; other references on the subject of Feynman diagrams include Chapter 9 of \cite{Hori} and Lecture 9 of \cite{Etingof}.

Consider an integral of the form
\begin{equation}\label{feynint}
\hbar^{-\frac{d}{2}}\int_Ve^{-S(x)/\hbar} dx,
\end{equation}
where $V$ is a $d$-dimensional vector space, $\hbar$ is a formal parameter, and
\begin{equation}
\label{S}
S(x) = \frac{1}{2}B(x,x) + \sum_{m \geq 0}\frac{g_m}{m!} B_m(x, \ldots, x)
\end{equation}
for a bilinear form $B$ and $m$-multilinear forms $B_m$ on $V$, where $g_m$ are constants.  The integral (\ref{feynint}) can be understood as a formal series in the parameters $\hbar$ and $g_m$.

%In the absence of the terms $B_m$, such an integral is easily computed by Wick's theorem, as we describe below.  On the other hand, when these other terms are present, expanding the exponential as a series reveals a much more complicated integral.  Feynman's theorem provides a combinatorial device for recording the contributions from the resulting terms, allowing one to compute the integral (\ref{feynint}) as an explicit sum indexed by graphs.

\subsection{Wick's Theorem}

We begin by addressing the simpler question of computing integrals involving an exponential of a bilinear form without any of the other tensors.  

Let $V$ be a vector space of dimension $d$ over $\R$, and let $B$ be a positive-definite bilinear form on $V$.  Wick's theorem will relate integrals of the form
\[\int_V l_1(x) \cdots l_N(x) e^{-B(x,x)/2} dx,\]
in which $l_1, \ldots, l_N$ are linear forms on $V$, to pairings on the set $[2k] = \{1,2, \ldots, 2k\}$. By a {\bf pairing} on  $[2k]$, we mean a partition of the set into $k$ disjoint subsets, each having two elements.  Let $\Pi_{2k}$ denote the set of pairings on $[2k]$.  The size of this set is
\[\left|\Pi_{2k} \right|=\frac{(2k)!}{(2!)^k k!},\]
as the reader can check as an exercise.

An element $\sigma \in \Pi_k$ can be viewed as a special kind of permutation on the set $[2k]$, which sends each element to the other member of its pair.  Write $[2k]/\sigma$ for the set of orbits under this involution.

\begin{thm}[Wick's Theorem]
Let $l_1,\dots,l_N\in V^*$.  If $N$ is even, then
\[
\int_V l_1(x)\dots l_N(x)e^{\tfrac{-B(x,x)}{2}}\ dx=
\frac{(2\pi)^{d/2}}{\sqrt{\det B}}\sum_{\sigma\in \Pi_{N}} \prod_{i\in [N]/\sigma} B\inv(l_i, l_{\sigma(i)}).
\]
If $N$ is odd, the integral is zero.
\end{thm}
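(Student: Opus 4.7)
The plan is to reduce the computation to a single Gaussian generating function and then extract the stated combinatorial identity by differentiating it. The odd case is immediate from a symmetry argument: under the substitution $x \mapsto -x$, the measure $e^{-B(x,x)/2}\,dx$ is preserved while the integrand $l_1(x)\cdots l_N(x)$ acquires a factor $(-1)^N$, so the integral equals its own negative when $N$ is odd.

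For the even case, I would first establish the baseline Gaussian $\int_V e^{-B(x,x)/2}\,dx = (2\pi)^{d/2}/\sqrt{\det B}$ by diagonalizing the positive-definite symmetric form $B$: in an orthonormal eigenbasis the integral factors into one-dimensional Gaussians, each contributing $\sqrt{2\pi/\lambda_i}$. Next, I would introduce the generating function
\[
Z(y) := \int_V e^{-B(x,x)/2 + y(x)}\,dx, \qquad y \in V^*,
\]
and compute it by completing the square (shifting $x$ by the vector dual to $y$ with respect to $B$) to obtain
\[
Z(y) = \frac{(2\pi)^{d/2}}{\sqrt{\det B}}\, e^{B\inv(y,y)/2}.
\]

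The integral in the theorem is then recovered by differentiation: setting $y = t_1 l_1 + \cdots + t_N l_N$ and applying $\del^N/(\del t_1 \cdots \del t_N)|_{t=0}$ to the integrand $e^{-B(x,x)/2 + y(x)}$ brings down exactly the product $l_1(x) \cdots l_N(x)$. Thus the problem reduces to extracting the coefficient of $t_1 t_2 \cdots t_N$ in the Taylor expansion of $\exp\bigl(\tfrac{1}{2}\sum_{i,j} t_i t_j\, B\inv(l_i, l_j)\bigr)$ and multiplying by $Z(0)$.

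Finally, I would expand the exponential and observe that only the $k = N/2$ term of the series contributes to the coefficient of $t_1 \cdots t_N$. In that term, each surviving monomial is indexed by an ordered sequence of $N/2$ pairs $(i_\alpha, j_\alpha)$ whose disjoint union is $\{1,\dots,N\}$, and by the symmetry of $B\inv$ each unordered pairing $\sigma \in \Pi_N$ is counted exactly $(N/2)!\cdot 2^{N/2}$ times; this precisely cancels the prefactor $1/\bigl((N/2)!\, 2^{N/2}\bigr)$ coming from the exponential series, yielding $\sum_{\sigma \in \Pi_N} \prod_{\{i,j\}\in\sigma} B\inv(l_i, l_j)$, as desired. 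The main technical obstacle is this last piece of combinatorial bookkeeping — a classical source of factor-of-two and factorial errors — but it is purely algebraic once the generating-function identity is in place. (An alternative route would be induction on $N$ via integration by parts, peeling off one $l_i$ at a time, but the generating-function derivation is cleaner and foreshadows the Feynman-diagrammatic viewpoint introduced in the next subsection.)
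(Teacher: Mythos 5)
Your proof is correct, but it takes a genuinely different route from the one in the text. The paper's proof normalizes $B$ to the standard form by a linear change of variables, then invokes multilinearity and symmetry in the $x_i$ to reduce to the single case $l_1 = \cdots = l_N = x_1$; the computation then collapses to the one-dimensional moment $\int_{\R} x^{2k} e^{-x^2/2}\,dx = \sqrt{2\pi}\,(2k)!/(2^k k!)$, evaluated via the Gamma function, and the combinatorics of pairings enters only through the closed form $(2k)!/(2^k k!) = |\Pi_{2k}|$. Your proof instead computes the Gaussian generating function $Z(y) = Z(0)\,e^{B\inv(y,y)/2}$ by completing the square, then extracts the mixed coefficient of $t_1\cdots t_N$ by differentiation; here the pairings emerge directly from the multinomial expansion of $\exp\bigl(\tfrac12\sum t_i t_j B\inv(l_i,l_j)\bigr)$ and the $(N/2)!\,2^{N/2}$ overcounting from ordered versus unordered pairs, which you handle correctly. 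The paper's argument is more elementary (no auxiliary source $y$, no completing the square in $d$ variables) but is genuinely special to the structure of Wick's theorem because of the reduction step; your generating-function derivation carries more machinery but is more robust and, as you note, leads naturally into the Feynman-diagram formalism, where one cannot reduce to a single repeated linear form and the source-term trick is the standard device. Both establish the result rigorously. One small point worth making explicit in your write-up: when completing the square you should note that the shift $x \mapsto x + \bar B\inv y$ is over a \emph{real} vector, valid since $y \in V^*$ is real, so no contour-deformation argument is needed.
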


\begin{proof}
First, apply a change of variables such that $B$ is of the form $B(x,x)=x_1^2+\dots+x_d^2$. The reader should check that this change of variables changes both sides of the equation by a factor of $\det (P)$, where $P$ is the change-of-basis matrix, and thus the equality prior to the change of variables is equivalent to the result after the change.  Furthermore, since both sides are multilinear in elements of $V^*$ and symmetric in $x_1, \ldots, x_d$, we may assume $l_1=l_2=\dots=l_N=x_1$.  The theorem is then reduced to computing
\[
\int_V x_1^{N} e^{\tfrac{-(x_1^2+\dots+x_d^2)}{2}}\ dx.
\]
This integral indeed vanishes when $N$ is odd, since the integrand is an odd function. If $N$ is even, write $N = 2k$.  In case $k=0$, the theorem holds by the well-known fact that
\[
\int_{-\infty}^\infty e^{\tfrac{-x^2}{2}}\ dx= \sqrt{2\pi}.
\]
For $k > 0$, we can use this same fact to integrate out the last $d-1$ variables, by which we see that the claim in the theorem is equivalent to
\begin{equation}\label{wickprf}
\int_{-\infty}^\infty x^{2k} e^{-\tfrac{-x^2}{2}}\ dx= \sqrt{2\pi}\frac{(2k)!}{2^k k!}.
\end{equation}

To prove (\ref{wickprf}), we first make the substitution $y=\tfrac{x^2}{2}$.  Recall that the gamma function is defined by
\[
\Gamma(z) = \int_0^{\infty} y^{z-1} e^{-y} dy
\]
and satisfies $\Gamma(\frac{1}{2}) = \sqrt{\pi}$, as well as
\[
\Gamma(z+k) = (z+k-1)(z+k-2) \cdots z\cdot \Gamma(z)
\]
for integers $k$.  Thus, we have:
\begin{align*}
\int_ {-\infty}^\infty x^{2k} e^{\tfrac{-x^2}{2}}\ dx &= 2\int_{0}^\infty x^{2k} e^{\tfrac{-x^2}{2}}\ dx\\
&=2\int_{0}^\infty (2y)^{k-1/2} e^{-y}\ dy\\
&=2^{k+1/2}\Gamma(k+\tfrac 12)\\
&=2^{k+1/2}(k-\tfrac 12)(k-\tfrac 32)\dots(\tfrac 12)\Gamma(\tfrac 12)\\
&=\sqrt{2\pi} \frac{(2k)!}{2^k k!},
\end{align*}
which proves the claim.
\end{proof}

\subsection{Feynman's theorem}

Now let us return to the more general integral
\[Z = \hbar^{- \frac{d}{2}} \int_V e^{-S(x)/\hbar}dx,\]
which, for reasons we will mention at the end of the section, is sometimes called a {\bf partition function}.  Recall that $S$ has an expansion in terms of multilinear forms given by (\ref{S}).

Because a pairing can be represented by a graph all of whose vertices are $1$-valent, Wick's theorem can be seen as a method for expressing certain integrals as summations over graphs, in which each graph contributes an explicit combinatorial term.  The goal of this section is to give a similar graph-sum expression for the partition function $Z$.

Before we can state the theorem, we require a bit of notation.  If $\mathbf{n} = (n_0, n_1, \ldots)$ is a sequence of nonnegative integers, all but finitely many of which are zero, let $G(\mathbf{n})$ denote the set of isomorphism classes of graphs with $n_i$ vertices of valence $i$ for each $i \geq 0$.  Note that the notion of ``graph" here is very broad: they may be disconnected, and self-edges and multiple edges are allowed.  If $\Gamma$ is such a graph, let
\[
b(\Gamma) = |E(\Gamma)| - |V(\Gamma)|,
\]
where $E(\Gamma)$ and $V(\Gamma)$ denote the edge set and vertex set, respectively.  An {\bf automorphism} of $\Gamma$ is a permutation of the vertices and edges that preserves the graph structure, and the set of automorphisms is denoted $\text{Aut}(\Gamma)$.

We will associate a certain number $F_{\Gamma}$ to each graph, known as the {\bf Feynman amplitude}.
The Feynman amplitude is defined by the following procedure:
\begin{enumerate}
\item Put the $m$-tensor $-B_m$ at each $m$-valent vertex of $\Gamma$.
\item For each edge $e$ of $\Gamma$, take the contraction of tensors attached to the vertices of $e$ using the bilinear form $B^{-1}$.  This will produce a number $F_{\Gamma_i}$ for each connected component $\Gamma_i$ of $\Gamma$.
\item If $\Gamma = \bigsqcup_i \Gamma_i$ is the decomposition of $\Gamma$ into connected components, define $F_{\Gamma} = \prod_{i} F_{\Gamma_i}$.
\end{enumerate}
By convention, we set the Feynman amplitude of the empty graph to be $1$.

\begin{thm}[Feynman's Theorem]
One has
\[
Z = \frac{(2\pi)^{d/2}}{\sqrt{\det(B)}} \sum_{\mathbf{n} = (n_0, n_1, \ldots)} \left( \prod_{i=0}^{\infty} g_i^{n_i}\right) \sum_{\Gamma \in G(\mathbf{n})} \frac{\hbar^{b(\Gamma)}}{|\text{Aut}(\Gamma)|} F_{\Gamma},
\]
where the outer summation is over all sequences of nonnegative integers with almost all zero. 
\end{thm}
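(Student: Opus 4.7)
The plan is to expand the integrand as a formal power series in the coupling constants $g_m$, reducing the computation of $Z$ to an infinite sum of integrals of polynomials against a Gaussian measure, each of which can be evaluated by Wick's Theorem. Every pairing that appears on the right-hand side of Wick's formula will then be reinterpreted as a graph, and the identification of combinatorial factors will yield the stated graph sum.

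First I would rescale by $x \mapsto \sqrt{\hbar}\, y$ (so $dx = \hbar^{d/2} dy$), which absorbs the prefactor $\hbar^{-d/2}$ and turns the exponent into
\[
-\frac{1}{2}B(y,y) - \sum_{m \geq 0} \frac{g_m\, \hbar^{m/2-1}}{m!}\, B_m(y,\ldots,y).
\]
Expanding the second exponential as a power series in the $g_m$ (and using a multi-index $\mathbf{n} = (n_0, n_1, \ldots)$ to record how many factors of each $B_m$ appear), the integral becomes
\[
\sum_{\mathbf{n}} \prod_{m} \frac{(-g_m)^{n_m}\, \hbar^{(m/2-1)n_m}}{n_m!\, (m!)^{n_m}} \int_V \prod_{m} B_m(y,\ldots,y)^{n_m}\, e^{-B(y,y)/2}\, dy.
\]
At this stage each integrand is a product of linear forms on $V$, once one polarizes each $B_m$, so Wick's Theorem applies and produces the factor $(2\pi)^{d/2}/\sqrt{\det B}$ together with a sum over pairings of the $\sum_m m\, n_m$ linear slots.

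The heart of the argument, and the main obstacle, is the translation of these pairings into graphs with the correct combinatorial weight. A pairing is naturally encoded by a labeled graph in which each factor $B_m$ contributes one vertex with $m$ labeled half-edges, and the pairing tells us which half-edges to glue; a glued edge contracts the associated tensor slots via $B^{-1}$, exactly the operation defining the Feynman amplitude $F_\Gamma$ (with the minus signs $(-g_m)^{n_m}$ absorbed into the convention that one places $-B_m$ at each $m$-valent vertex). The number of labeled graphs yielding a given isomorphism class $\Gamma \in G(\mathbf{n})$ is $\prod_m n_m!\,(m!)^{n_m} / |\mathrm{Aut}(\Gamma)|$, because $\prod_m n_m!\,(m!)^{n_m}$ counts all orderings of vertices and half-edges and two such orderings produce the same labeled configuration precisely when they differ by an automorphism. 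Multiplying this count against the $1/(n_m!\,(m!)^{n_m})$ coming from the Taylor expansion collapses everything to $F_\Gamma / |\mathrm{Aut}(\Gamma)|$.

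It remains to account for the power of $\hbar$. Summing exponents, $\sum_m (m/2 - 1) n_m = \tfrac{1}{2}\sum_m m\, n_m - \sum_m n_m$; since every edge has two half-edges, $\sum_m m\, n_m = 2|E(\Gamma)|$, while $\sum_m n_m = |V(\Gamma)|$, giving $\hbar^{|E(\Gamma)| - |V(\Gamma)|} = \hbar^{b(\Gamma)}$. Collecting the factors $\prod_m g_m^{n_m}$, the prefactor $(2\pi)^{d/2}/\sqrt{\det B}$, and the graph sum $\sum_{\Gamma \in G(\mathbf{n})} \hbar^{b(\Gamma)} F_\Gamma / |\mathrm{Aut}(\Gamma)|$ yields the formula for $Z$. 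Throughout, the identities are to be read as an equality of formal power series in $\hbar$ and the $g_m$, as promised after (\ref{feynint}), so convergence issues never arise; the only subtlety requiring care is the symmetry-factor count described above.
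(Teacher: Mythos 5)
Your proposal is correct and follows the same route as the paper: rescale by $\sqrt{\hbar}$, expand the exponential in the $g_m$ to reduce to Gaussian moments, apply Wick's theorem, encode each pairing as a graph built from $m$-valent ``flowers'' of labeled half-edges, and divide out the symmetry factor $\prod_m n_m!\,(m!)^{n_m}/|\mathrm{Aut}(\Gamma)|$ via an orbit-stabilizer count, with the exponent of $\hbar$ simplifying to $|E(\Gamma)|-|V(\Gamma)|$. The only cosmetic difference is that the paper phrases the symmetry-factor step explicitly as a transitive action of the half-edge-relabeling group with stabilizer $\mathrm{Aut}(\Gamma)$, whereas you state the resulting count directly; the content is identical.
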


Before we prove the theorem, let us compute a few examples of Feynman amplitudes to make the procedure clear.

\begin{ex} Let $\Gamma$ be the following graph:
\tikzstyle vertex=[circle, draw, fill=black!50,
inner sep=0pt, minimum width=4pt]
\begin{figure}[h]
\centering
\begin{tikzpicture}[thick,scale=1]
\draw \foreach \x in {0,2}
{
(\x, 0) node[vertex] {} -- (\x+1,0) node[vertex] {}
};
\end{tikzpicture}
\label{dscnnctgrph1}
\end{figure}
\end{ex}

\noindent Given $B: V \otimes V \rightarrow \R$, we have a corresponding bilinear form $B^{-1}: V^{\vee} \otimes V^{\vee} \rightarrow \R$.  Moreover, $B_1 \in V^{\vee}$, and so we can write the Feynman amplitude of this graph as
\[F_{\Gamma} = (B^{-1}(-B_1, -B_1))^2.\]

\begin{ex} Consider now the graph
\tikzstyle vertex=[circle, draw, fill=black!50,
inner sep=0pt, minimum width=4pt]
\begin{figure}[h]
\centering
\begin{tikzpicture}[thick,scale=1]
\draw \foreach \x in {0,1}
{
(\x,0) node[vertex] {} -- (\x+1,0)
};
\draw {(1,0) -- (1,1) node[vertex] {} };
\node[vertex] at (2,0){};
\end{tikzpicture}
\label{tritree1}
\end{figure}
\end{ex}

\noindent Associated to $B$ is a map $\overline{B}: V \rightarrow V^{\vee}$, and in this notation, the Feynman amplitude of the graph can be expressed as
\[F_{\Gamma} = -B_3(\overline{B}^{-1}(-B_1), \overline{B}^{-1}(-B_1), \overline{B}^{-1}(-B_1)).\]

\begin{proof}[Proof of Feynman's Theorem]

After a bit of combinatorial fiddling, this theorem actually follows directly from Wick's theorem.  First, perform the change of variables $y = x/\sqrt{\hbar}$, under which
\[Z = \int_V e^{-B(y,y)/2} e^{\sum_{m \geq 0} g_m (\frac{-\hbar^{\frac{m}{2} -1} B_m(y,\ldots, y)}{m!})} dy.\]
Expanding the exponential as a series gives
\begin{align*}
Z &= \int_V e^{-B(y,y)/2} \prod_{i \geq 0} \sum_{n_i \geq 0} \frac{g_i^{n_i}}{(i!)^{n_i} n_i!}\big(-\hbar^{\frac{i}{2}-1} B_i(y, \ldots, y)\big)^{n_i} dy\\
&= \sum_{\mathbf{n} = (n_0, n_1, \ldots)} \bigg(\prod_{i \geq 0} \frac{g_i^{n_i}}{(i!)^{n_i} n_i!} \hbar^{n_i(\frac{i}{2}-1)}\bigg) \int_V e^{-B(y,y)/2} \prod_{i \geq 0} \big(-B_i(y, \ldots, y)\big)^{n_i} dy.
\end{align*}
Denote
\[Z_{\mathbf{n}} = \int_V e^{-B(y,y)/2} \prod_{i \geq 0} (-B_i(y, \ldots, y))^{n_i} dy.\]
Each of the factors $-B_i(y, \ldots, y)$ in this integral can be expressed as a sum of products of $i$ linear forms on $V$.  After unpacking the expresion in this way, $Z_{\mathbf{n}}$ becomes a sum of integrals of the form
\[
\int_V e^{-B(y,y)/2} \; \big(\text{one linear form}\big)^{n_1} \big(\text{product of two linear forms}\big)^{n_2}\cdots,
\]
each with an appropriate coefficient, so we will be able to apply Wick's theorem. 

Let $N = \sum_i i\cdot n_i$, which is the number of linear forms in the above expression for $Z_{\mathbf{n}}$. We want to express the integral $Z_{\mathbf{n}}$ using graphs. To this end, we draw $n_0$ vertices with no edges, $n_1$ vertices with $1$ half-edge emanating from them, $n_2$ vertices with $2$ half-edges, $n_3$ vertices with $3$ half-edges, et cetera; these are sometimes called ``flowers".  We place $-B_i$ at the vertex of each $i$-valent flower.  A pairing $\sigma \in \Pi_N$ can be understood as a way of joining pairs of the half-edges to form full edges, which yields a graph $\Gamma(\sigma)$, and applying Wick's theorem we get a number $F(\sigma)$ from each such pairing $\sigma$. One can check that all of the pairings $\sigma$ giving rise to a particular graph $\Gamma = \Gamma(\sigma)$ combine to contribute the Feynman amplitude $F_{\Gamma(\sigma)}$. In particular, by Wick's theorem, only even $N$ give a nonzero result.

At this point, we have
\[
Z = \frac{(2\pi)^{d/2}}{\sqrt{\det(B)}}\sum_{\mathbf{n}} \bigg(\prod_{i \geq 0} \frac{g_i^{n_i}}{(i!)^{n_i} n_i!}\bigg) \sum_{\sigma \in \Pi_N} \hbar^{b(\Gamma(\sigma)} F_{\Gamma(\sigma)},
\]
where we have used the straightforward observation that the exponent $\sum n_i(\frac{i}{2} - 1) = \frac{N}{2} - \sum n_i$ on $\hbar$ is equal to the number of edges minus the number of vertices of any of the graphs appearing in the summand corresponding to $\mathbf{n}$.  All that remains is to account for the fact that many pairings can yield the same graph, and thus we will obtain a factor when we re-express the above as a summation over graphs rather than over pairings.

To compute the factor, fix a graph $\Gamma$, and consider the set $P(\Gamma)$ of pairings on $[N]$ yielding the graph $\Gamma$.  Let $H$ be the set of half-edges of $\Gamma$, which are attached as above to a collection of flowers.  Let $G$ be the group of permutations of $H$ that perserve flowers; this is generated by permutations of the edges within a single flower, as well as swaps of two entire flowers with the same valence.  Using this, it is easy to see that
\[|G|= \prod_{i \geq 0} (i!)^{n_i} n_i!.\]
The group $G$ acts transitively on the set $P(\Gamma)$, and the stabilizer of this action is equal to $\text{Aut}(\Gamma)$.  Thus, the number of distinct pairings yielding the graph $\Gamma$ is
\[\frac{\prod_i (i!)^{n_i} n_i!}{|\text{Aut}(\Gamma)|},\]
and the theorem follows.
\end{proof}

We conclude this preliminary chapter with a bit of ``generatingfunctionology" that motivates the term ``partition function" for $Z$.

\begin{thm}
\label{connected}
Let $Z_0 = \frac{(2\pi)^{d/2}}{\det(B)}$.  Then one has
\[\log(Z/Z_0) = \sum_{\mathbf{n} = (n_0, n_1, \ldots )} \left(\prod_{i=0}^{\infty} g_i^{n_i}\right) \sum_{\Gamma \in G_c(\mathbf{n})} \frac{\hbar^{b(\Gamma)}}{|\text{Aut}(\Gamma)|} F_{\Gamma},\]
where the outer summation is over $\mathbf{n}$ as before, and $G_c(\mathbf{n})$ denotes the set of isomorphism classes of connected graphs in $G(\mathbf{n})$.
\end{thm}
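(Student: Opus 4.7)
The plan is to apply the standard exponential formula for graph-indexed weighted sums: any generating series of the form $\sum_\Gamma w(\Gamma)/|\mathrm{Aut}(\Gamma)|$ over all graphs, where the weight $w$ is multiplicative under disjoint union, equals the exponential of the analogous sum restricted to connected graphs. Since Feynman's theorem already identifies $Z/Z_0$ with exactly such a sum, the content of the theorem is verifying that every factor appearing in the summand is multiplicative under decomposition into connected components.

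First I would check that, for a graph $\Gamma$ with connected components $\Gamma_1,\ldots,\Gamma_k$, all relevant data are multiplicative. The Feynman amplitude satisfies $F_\Gamma=\prod_j F_{\Gamma_j}$ by the very definition given in step (3) of the construction of $F_\Gamma$. The exponent $b(\Gamma)=|E(\Gamma)|-|V(\Gamma)|$ is additive over components, hence $\hbar^{b(\Gamma)}=\prod_j \hbar^{b(\Gamma_j)}$. The valence sequence $\mathbf{n}$ is also additive: if $\Gamma_j$ contributes $\mathbf{n}^{(j)}$, then $\mathbf{n}=\sum_j \mathbf{n}^{(j)}$, so $\prod_i g_i^{n_i}=\prod_j \prod_i g_i^{n_i^{(j)}}$. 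Therefore the full summand of Feynman's theorem (apart from $1/|\mathrm{Aut}(\Gamma)|$) is a product over connected components.

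The step I expect to require the most care is the automorphism factor. Group the connected components of $\Gamma$ by isomorphism type: write $\Gamma=\bigsqcup_{\Gamma'} (\Gamma')^{\sqcup m_{\Gamma'}}$, where the index runs over isomorphism classes of connected graphs and only finitely many $m_{\Gamma'}$ are nonzero. Then an automorphism of $\Gamma$ consists of a permutation of the $m_{\Gamma'}$ copies of each $\Gamma'$ together with an automorphism of each copy, so
\[
|\mathrm{Aut}(\Gamma)| \;=\; \prod_{\Gamma' \text{ connected}} |\mathrm{Aut}(\Gamma')|^{m_{\Gamma'}}\, m_{\Gamma'}!.
\]
This is the crucial combinatorial identity. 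Substituting it into the Feynman sum and reorganizing the sum over all graphs as a sum over multiplicity functions $(m_{\Gamma'})$, the summand factors, and we obtain
\[
Z/Z_0 \;=\; \sum_{(m_{\Gamma'})} \prod_{\Gamma' \text{ connected}} \frac{1}{m_{\Gamma'}!}\left(\frac{\bigl(\prod_i g_i^{n_i(\Gamma')}\bigr)\hbar^{b(\Gamma')} F_{\Gamma'}}{|\mathrm{Aut}(\Gamma')|}\right)^{m_{\Gamma'}} \;=\; \prod_{\Gamma' \text{ connected}} \exp\!\left(\frac{\bigl(\prod_i g_i^{n_i(\Gamma')}\bigr)\hbar^{b(\Gamma')} F_{\Gamma'}}{|\mathrm{Aut}(\Gamma')|}\right),
\]
where the empty-graph term gives the $1$ needed to start the product. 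Taking $\log$ turns the product into the claimed sum over connected graphs.

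A brief remark on convergence: all the manipulations above are performed in the ring of formal power series in $\hbar$ and the $g_i$, exactly as $Z$ was defined in Section 0.2; at each fixed multidegree in $(g_i)$ only finitely many graphs contribute, so reordering the sums and expanding the exponential is justified term by term. With that bookkeeping in place, the proof is complete.
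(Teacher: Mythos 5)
Your proof is correct. Note that the paper itself omits the proof of this theorem, remarking only that it is ``a combinatorial exercise,'' so there is no paper proof to compare against; but your argument is exactly the standard one, and it is sound. The two ingredients you isolate are indeed what is needed: multiplicativity of the weight $w(\Gamma)=\left(\prod_i g_i^{n_i(\Gamma)}\right)\hbar^{b(\Gamma)}F_\Gamma$ under disjoint union (following from additivity of $\mathbf{n}(\Gamma)$ and $b(\Gamma)$, together with the definition of $F_\Gamma$ for disconnected graphs), and the wreath-product identity
\[
|\mathrm{Aut}(\Gamma)| \;=\; \prod_{\Gamma'\ \text{connected}} |\mathrm{Aut}(\Gamma')|^{m_{\Gamma'}}\, m_{\Gamma'}!,
\]
for $\Gamma = \bigsqcup_{\Gamma'}(\Gamma')^{\sqcup m_{\Gamma'}}$. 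Once both are in hand, reindexing the sum over graphs by the multiplicity function $(m_{\Gamma'})$ and factoring gives the exponential, and your remark about working in the ring of formal power series in $\hbar$ and the $g_i$ disposes of any concern about rearrangement.

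One incidental observation, not a defect in your argument: the statement defines $Z_0=\frac{(2\pi)^{d/2}}{\det(B)}$, but Feynman's Theorem earlier in the chapter has the prefactor $\frac{(2\pi)^{d/2}}{\sqrt{\det(B)}}$, so the $Z_0$ in the statement is missing a square root. Your proof correctly treats $Z_0$ as whatever constant makes $Z/Z_0$ equal the Feynman graph sum, which is the intended reading.
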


The proof of this theorem is a combinatorial exercise and will be omitted.  Note that if we begin from this theorem, then the terms in Feynman's Theorem can be viewed as arising from ``partitioning" a disconnected graph into its connected components.   This explains the terminology for $Z$ in this particular situation.

\chapter{Quantization of finite-dimensional symplectic vector spaces} \label{finite}

From a physical perspective, geometric quantization arises from an attempt to make sense of the passage from a classical theory to the corresponding quantum theory.  The state space in classical mechanics is represented by a symplectic manifold, and the observables (quantities like position and momentum) are given by smooth real-valued functions on that manifold.  Quantum mechanics, on the other hand, has a Hilbert space as its state space, and the observables are given by self-adjoint linear operators.  Thus, quantization should associate a Hilbert space to each symplectic manifold and a self-adjoint linear operator to each smooth real-valued function, and this process should be functorial with respect to symplectic diffeomorphisms.

By considering certain axioms required of the quantization procedure, we show in Section \ref{setup} that the Hilbert space of states can be viewed as a certain space of functions.  Careful study of these axioms leads, in Section \ref{CCR}, to a representation of a quantized symplectic transformation as an explicit expression in terms of multiplication and differentiation of the coordinates.  However, as is explained in Section \ref{integrals}, it can also be expressed as a certain integral over the underlying vector space.  This representation has two advantages.  First, the method of Feynman diagrams allows one to re-write it as a combinatorial summation, as is explained in Section \ref{Feynman}.  Second, it can be generalized to the case in which the symplectic diffeomorphism is nonlinear.  Though the nonlinear case will not be addressed in these notes, we conclude this chapter with a few comments on nonlinear symplectomorphisms and other possible generalizations of the material developed here.

\section{The set-up}
\label{setup}

The material of this section is standard in the physics literature, and can be found, for example, in \cite{Blau} or \cite{Echeverria}.

\subsection{Quantization of the state space}

Let $V$ be a real symplectic vector space of dimension $2n$, whose elements are considered to be the classical states.  Roughly speaking, elements of the associated Hilbert space of quantum states will be square-integrable functions on $V$.  It is a basic physical principle, however, that quantum states should depend on only half as many variables as the corresponding classical states; there are $n$ position coordinates and $n$ momentum coordinates describing the classical state of a system, whereas a quantum state is determined by either position or momentum alone.  Thus, before quantizing $V$ it is necessary to choose a {\bf polarization}, a decomposition into half-dimensional subspaces.  Because these two subspaces should be thought of as position and momentum, which mathematically are the zero section and the fiber direction of the cotangent bundle to a manifold, they should be Lagrangian subspaces of $V$.

The easiest way to specify a polarization in the context of vector spaces is to choose a symplectic basis $e = \{e^{\alpha},e_{\beta}\}_{1 \leq \alpha, \beta \leq n}$.  Recall, such a basis satisfies
\[
\pairing{e^{\alpha}, e^{\beta}} = \pairing{e_{\alpha}, e_{\beta}} = 0,\quad \pairing{e^{\alpha}, e_{\beta}} = \delta^{\alpha}_{\beta},
\]
where $\langle \cdot, \cdot \rangle$ is the symplectic form on $V$.  The polarization may be specified by fixing the subspace $R = \text{Span}\{e_{\alpha}\}_{1 \leq \alpha \leq n}$ and viewing $V$ as the cotangent bundle $T^*R$ in such a way that $\langle \cdot, \cdot \rangle$ is identified with the canonical symplectic form.\footnote{Note that in this identification, we have chosen for the fiber coordinates to be the {\it first} $n$ coordinates.  It is important to keep track of whether upper indices or lower indices appear first in the ordering of the basis to avoid sign errors.}  An element of $V$ will be written in the basis $e$ as
\[
\sum_{\alpha} p_{\alpha} e^{\alpha} + \sum_{\beta} q^{\beta} e_{\beta}.
\]
For the remainder of these notes, we will suppress the summation symbol in expressions like this, adopting Einstein's convention that when Greek letters appear both up and down, they are automatically summed over all values of the index. For example, the above summation would be written simply as $p_\alpha e^\alpha + q^\beta e_\beta$. 

Let $V \cong \R^{2n}$ be a symplectic vector space with symplectic basis $e = \{e^{\alpha}, e_{\beta}\}_{1 \leq \alpha, \beta \leq n}$.  Then the {\bf quantization} of $(V,e)$ is the Hilbert space $\mathscr{H}_e$ of square-integrable functions on $R$ which take values in $\C[[\hbar, \hbar^{-1}]]$.  Here, $\hbar$ is considered as a formal parameter, although physically, it denotes Plank's constant.

It is worth noting that, while it is necessary to impose square-integrability in order to obtain a Hilbert space, in practice one often needs to consider formal functions on $R$ that are not square-integrable.  The space of such formal functions from $R$ to $\C[[\hbar, \hbar^{-1}]]$ is called the {\bf Fock space}.

\subsection{Quantization of the observables}

Observables in the classical setting are smooth functions $f \in C^{\infty}(V)$, and the result of a measurement is the value taken by $f$ on a point of $V$.  In the quantum framework, observables are operators $U$ on $\mathscr{H}_e$, and the result of a measurement is an eigenvalue of $U$.  In order to ensure that these eigenvalues are real, we require that the operators be self-adjoint.

There are a few other properties one would like the quantization $Q(f)$ of observables $f$ to satisfy.  We give one possible such list below, following Section 3 of \cite{Blau}.  Here, a set of observables is called {\bf complete} if any function that ``Poisson-commutes" (that is, has vanishing Poisson bracket) with every element of the set is a constant function.  Likewise, a set of operators is called {\bf complete} if any operator that commutes with every one of them is the identity.

The quantization procedure should satisfy:
\begin{enumerate}
\item {\bf Linearity}: $Q(\lambda f + g) = \lambda Q(f) + Q(g)$ for all $f,g \in C^{\infty}(V)$ and all $\lambda \in \R$.
\item {\bf Preservation of constants}: $Q(\mathbf{1}) = \id$.
\item {\bf Commutation}: $[Q(f), Q(g)]= \hbar Q(\{f,g\})$, where $\{ \; , \;\}$ denotes the Poisson bracket.\footnote{This convention differs by a factor of $i$ from what is taken in \cite{Blau}, but we choose it to match with what appears in the Gromov-Witten theory literature.}
\item {\bf Irreducibility}: If $\{f_1, \ldots, f_k\}$ is a complete set of observables, then $\{Q(f_1), \ldots, Q(f_k)\}$ is a complete set of operators.
\end{enumerate}
However, it is in general not possible to satisfy all four of these properties simultaneously.  In practice, this forces one to restrict to quantizing only a certain complete subset of the observables, or to relax the properties required.  We will address this in our particular case of interest shortly.

One complete set of observables on the state space $V$ is given by the coordinate functions $\{p_{\alpha}, q^{\beta}\}_{\alpha,\beta = 1,\ldots, n}$, and one can determine a quantization of these observables by unpacking conditions (1) - (4) above.  Indeed, when $f$ and $g$ are coordinate functions, condition (3) reduces to the {\bf canonical commutation relations} (\CCR), where we write $\hat{x}$ for $Q(x)$:
\[
[\hat{p}_{\alpha}, \hat{p}_{\beta}] = [\hat{q}^{\alpha}, \hat{q}^{\beta}] = 0, \; \; [\hat{p}_{\alpha}, \hat{q}^{\beta}] = \hbar \delta_{\alpha}^{\beta}.
\]
The algebra generated by elements $\hat{q}^{\alpha}$ and $\hat{p}_{\beta}$ subject to these commutation relations is known as the Heisenberg algebra.  Thus, the above can be understood as requiring that the quantization of the coordinate functions defines a representation of the Heisenberg algebra.  By Schur's Lemma, condition (4) is equivalent to the requirement that this representation be irreducible.  The following definition provides such a representation.

The {\bf quantization} of the coordinate functions\footnote{To be precise, these operators do not act on the entire quantum state space $\mathscr{H}_e$, because elements of $\mathscr{H}_e$ may not be differentiable.  However, this will not be an issue in our applications, because quantized operators will always act on power series.} is given by
\begin{align*}
\hat{q}^{\alpha}\Psi = q^{\alpha} \Psi,\\
\hat{p}_{\alpha} \Psi = \hbar\frac{\d \Psi}{\d q^{\alpha}}
\end{align*}
for $\Psi \in \mathscr{H}_e$.

In fact, the complete set of observables we will need to quantize for our intended applications consists of the quadratic functions in the Darboux coordinates.  To quantize these, order the variables of each quadratic monomial with $q$-coordinates on the left, and quantize each variable as above.  That is:
\begin{align*}
Q(q^{\alpha}q^{\beta}) &= q^{\alpha}q^{\beta}\\
Q(q^{\alpha}p_{\beta}) & = \hbar q^{\alpha} \frac{\d}{\d q^{\beta}}\\
Q(p_{\alpha}p_{\beta}) & = \hbar^2 \frac{\d^2}{\d q^{\alpha} \d q^{\beta}}.
\end{align*}

There is one important problem with this definition: the commutation condition (3) holds only up to an additive constant when applied to quadratic functions.  More precisely,
\[[Q(f) Q(g)] = \hbar Q(\{f,g\}) + \hbar^2 \mathcal{C}(f,g),\]
where $\mathcal{C}$ is the cocycle given by
\begin{equation}
\label{cocycle}
\mathcal{C}(p_{\alpha}p_{\beta}, q^{\alpha} q^{\beta}) = \begin{cases}1 & \alpha \neq \beta\\ 2 & \alpha = \beta\end{cases}
\end{equation}
and $\mathcal{C}(f,g) = 0$ for any other pair of quadratic monomials $f$ and $g$.  This ambiguity is sometimes expressed by saying that the quantization procedure gives only a projective representation of the Lie algebra of quadratic functions in the Darboux coordinates.

\subsection{Quantization of symplectomorphisms}

All that remains is to address the issue of functoriality.  That is, a symplectic diffeomorphism $T: V \rightarrow V$ should give rise to an operator $U_T: \mathscr{H}_{e} \rightarrow \mathscr{H}_{\tilde{e}}$.  In fact, we will do something slightly different: we will associate to $T$ an operator $U_T: \mathscr{H}_{e} \rightarrow \mathscr{H}_{e}$.  In certain cases, such as when $T$ is upper-triangular, there is a natural identification between $\mathscr{H}_e$ and $\mathscr{H}_{\tilde{e}}$, so our procedure does the required job.  More generally, the need for such an identification introduces some ambiguity into the functoriality of quantization.

Furthermore, we will consider only the case in which $T$ is linear and is the exponential of an infinitesimal symplectic transformation, which is simply a linear transformation whose exponential is symplectic. For applications to Gromov-Witten theory later, such transformation are the only ones we will need to quantize.

The computation of $U_T$ is the content of the next three sections.

\section{Quantization of symplectomorphisms via the \CCR}
\label{CCR}

The following section closely follows the presentation given in \cite{XT}.

Let $T: V \rightarrow V$ be a linear symplectic isomorphism taking the basis $e$ to a new basis $\tilde{e} = \{\tilde{e}^{\alpha}, \tilde{e}_{\beta}\}$ via the transformation
\[
\teau = A^\al_\beta \ebu + C^{\beta\al} \ebd,
\]
\[
\tead = B_{\beta\al} \ebu + D^\beta_\al \ebd.
\]
Let $\tilde{p}_{\alpha}$ and $\tilde{q}^{\beta}$ be the corresponding coordinate functions for the basis vectors $\tilde{e}^{\alpha}$ and $\tilde{e}_{\beta}$. Then $\widehat{ \tilde{\pb}}$ and $\widehat{ \tilde{\qa}}$ are defined in the same way as above.  The relation between the two sets of coordinate functions is:
\begin{align}
\label{coordinates}
 \pa &=  A_\al^\beta  \tpb + B_{\al \beta}  \tqb  \\
 \qa &=   C^{\al \beta}  \tpb + D^\al_\beta  \tqb , \nonumber
\end{align}
and the same equations give relations between their respective quantizations.  We will occasionally make use of the matrix notation
\begin{align*}
p &= A \tilde{p} + B \tilde{q}\\
q &= C\tilde{p} + D\tilde{q}
\end{align*}
to abbreviate the above.

To define the operator $\mathscr{H}_e \rightarrow \mathscr{H}_e$ associated to the transformation $T$, observe that by inverting the relationship \eqref{coordinates} and quantizing, one can view both $\hat{p}_{\alpha}, \hat{q}^{\beta}$ and $\widehat{\tilde{p}}_{\alpha}, \widehat{\hat{q}^{\beta}}$ as representations of $\mathscr{H}_e$.  The operator $U_T$ will be defined by the requirement that
\begin{align}
\label{satisfy}
{\hat{\tilde{q}}^\al} U_T &= U_T  \hat{  \qa} \\  
\label{satisfy2}
{\hat{\tilde{p}}_\al} U_T &= U_T  \hat{ \pa}.
\end{align}
As the computation will show, these equations uniquely specify $U_T$ up to a multiplicative constant.

To obtain an explicit formula for $U_T$, we restrict as mentioned above to the case in which
\[
T = \left(\begin{matrix}
A & B\\C & D
\end{matrix}\right) 
= \exp \left(\begin{matrix}
a & b\\ c & d
\end{matrix}\right).
\]
In other words, if
\[
T_t = \left(\begin{matrix}
A(t) & B(t)\\C(t) & D(t)
\end{matrix}\right)
=\exp \left(\begin{matrix}
t a & t b\\ t c & td
\end{matrix}\right),
\]
then $T$ is the derivative at $t=0$ of the family of transformations $T_t$.  The matrix $ \left(\begin{smallmatrix}a & b\\ c & d \end{smallmatrix}\right)$ satisfies the infinitesimal symplectic relations
\begin{equation}\label{infrel}
b^T=b, \qquad c^T=c, \qquad a^T=-d, 
\end{equation}
which follow from the relations defining $T$ by differentiation.

This family of transformations yields a family of bases $\{\eau_t, \ead^t\}$ defined by 
\begin{align*} 
\eau_t &= A(t)^\al_\beta \ebu + C(t)^{\beta\al} \ebd \\ 
\ead^t &= B(t)_{\beta\al} \ebu + D(t)^\beta_\al \ebd,
\end{align*}
with corresponding coordinate functions $\{\pa^t, \qa_t\}$.  Note that we obtain the original transformation $T$, as well as the original basis $\tilde{e}$ and coordinate functions $\tilde{p}_{\alpha}, \tilde{q}^{\beta}$, by setting $t=1$.

Using the fact that
\[
T^{-1}_t = \exp\left(\begin{matrix} 
-at & -bt\\ -ct & -dt
\end{matrix}\right) 
= \left(\begin{matrix}
I - ta & - tb \\ -tc & I - td
\end{matrix}\right) + O(t^2),
\]
we obtain the relations
\begin{align*}%\label{trans} 
e^{\alpha} &= e^{\alpha}_t - ta^{\alpha}_{\beta} e^{\beta}_t - tc^{\beta\alpha} e^t_{\beta} + O(t^2),\\
e_{\alpha} &= e_{\alpha}^t - tb_{\beta\alpha}e^{\beta}_t - td^{\beta}_{\alpha}e^t_{\beta} + O(t^2).
\end{align*}
This implies
\begin{align*}
 \pa^t &=  \pa  - t a_\al^\beta  \pb - t b_{\al \beta}  \qb + O(t^2)\\
 \qa_t &=  \qa - t c^{\al \beta}  \pb - t d^\al_\beta  \qb + O(t^2)
\end{align*}
and consequently
\begin{align}
 \hat{\pa^t} =  \hat{\pa}  - t a_\al^\beta  \hat{\pb} - t b_{ \al \beta}  \hat{\qb} + O(t^2) \label{xbd} \\
 \hat{\qa_t} =  \hat{\qa}  - t c^{ \al \beta}  \hat{\pb} - t d^\al_\beta  \hat{\qb} + O(t^2). \nonumber
\end{align}

Let $U_t = U_{T_t}$, so that
\begin{align}
\hat{\pa^t} U_t &= U_t  \hat{\pa}\label{
To depart while seated or standing is all one.
All I shall leave behind me
Is a heap of bones.
In empty space I twist and soar
And come down with the roar of thunder
To the sea.} \\
\hat{\qa_t} U_t &= U_t  \hat{\qa}. \nonumber
\end{align}
Denote by $u$ the {\bf infinitesimal variation of $U_T$}, 
\[\left. u  = \frac{d}{dt}\right|_{t=0} U_t.
\]
By plugging \eqref{xbd} %and \eqref{xbu} 
into the above equations and taking the derivative at $t= 0$, we derive commutation relations satisfied by $u$:
\begin{align*} 
[\hat{ \pa},u] &= a^\beta_\al  \hat{\pb} + b_{  \al \beta}  \hat{\qb} \\ 
 [\hat{\qa},u] &= c^{ \al \beta}  \hat{\pb} + d^\al_\beta  \hat{\qb}.
\end{align*}
These equations allow us to determine the infinitesimal variation of $U$ up to a constant:
\[ 
u =  -\frac{1}{2 \hbar}  c^{\alpha\beta}  \hat{\pa} \hat{\pb} +  \frac{1}{\hbar}  a^{\beta}_{\alpha}  \hat{\qa} \hat{\pb} +  \frac{1}{2\hbar} b_{\alpha\beta} \hat{\qa}  \hat{\qb} + C.
\]
After identifying $\hat{\pa}$ and $\hat{\qa}$ with the operators  $\hbar\frac{\del }{\del q^\al}$  and $\qa$, respectively, we obtain
\begin{align}\label{don't stop believing}
u =  -\frac{ \hbar}{2}  c^{\alpha\beta}  \frac{\del }{\del q^\al} \frac{\del}{\del q^\beta} +  a_{\alpha}^{\beta}  \qa  \frac{\del}{\del q^\beta} + \frac{1}{2\hbar} b_{\alpha\beta} \qa  \qb + C.
\end{align}

Expanding \eqref{
To depart while seated or standing is all one.
All I shall leave behind me
Is a heap of bones.
In empty space I twist and soar
And come down with the roar of thunder
To the sea.} with respect to $t$ yields formulas for $\left[\hat{p_\alpha}, \left. \left( (\tfrac{d}{dt})^k U_t\right) \right|_{t=0}\right]$ and $\left[\hat{q^\alpha}, \left. \left( (\tfrac{d}{dt})^k U_t\right) \right|_{t=0}\right]$.  Using these, one can check that if $T_t$ takes the form
 $T_t = exp\left(t \left(\begin{smallmatrix}a & b\\ c & d \end{smallmatrix}\right)\right)$, then $U_t$ will take the form $U_t = \exp(tu)$, so in particular, $U_T = \exp(u)$.  Thus, equation (\ref{don't stop believing}) gives a general formula for $U_T$.

The formula simplifies significantly when $T$ takes certain special forms.  Let us look explicitly at some particularly simple cases.  
%In each case, for $\psi(q^{\alpha}) \in \HH_e$, we will determine $U_T(\psi)$, which is again a function of $q$.
\begin{ex}
Consider first the case where $b=c=0$.  In this case, we obtain
\[
(U_T\psi)(q) = \exp\left(  a^{\beta}_{\alpha} q^\al \frac{\del}{\del q^\beta} \right)\psi(q) = \psi(A^Tq).
\]

Let us verify this formula in a very easy case.  Suppose that $a$ has only one nonzero entry, and that this entry is off the diagonal, so that $a^{\beta}_{\alpha} = \delta^i_{\alpha}\delta^{\beta}_j$ for some fixed $i \neq j$.  Assume furthermore that $\psi$ is a monomial:
\[
\psi(q) = \prod_{\alpha} (q^{\alpha})^{c_{\alpha}}.
\]
Then
\begin{align*}
(U_T\psi)(q) &= \exp\left(  a^j_i q^i \frac{\del}{\del q^j} \right)\prod \left(q^\al\right)^{c_\al} \\
 	&= \sum_{k = 0}^{c_j} \frac{1}{k!}\left(  a^j_i q^i \frac{\del}{\del q^j} \right)^k\prod \left(q^\al\right)^{c_\al} \\
 	&= \sum_{k = 0}^{c_j} \frac{c_j!}{(k!)(c_j - k)!}\left(a^j_i \right)^k (q^i)^{k} (q^j)^{c_j - k} \prod_{\al \neq j}\left(q^\al\right)^{c_\al}\\
 	&= \prod_{\al \neq j}\left(q^\al\right)^{c_\al}\left(q^j + a^j_i q^i\right)^{c_j} \\ 
 	&= \psi(A^Tq),
\end{align*}
where we use that $A = \exp(a) = I + a$ in this case.
\end{ex}

\begin{ex}
In the case where $a=c= 0$, or in other words, $T$ is upper-triangular, the formula above directly implies
\[
(U_T\psi)(q) = \exp\left( \frac{1}{2\hbar}b_{\al \be} q^\al q^\beta \right)\psi(q).
\]
However, in this case, it is easy to check that $B = b$, and we obtain
\[
(U_T\psi)(q) = \exp\left( \frac{1}{2\hbar}B_{\al \be} q^\al q^\beta \right)\psi(q).
\]
\end{ex}

\begin{ex}
Finally, consider the case where $a=b=0$, so that $T$ is lower-triangular.  Then
\[
(U_T\psi)(q) = \exp\left(-\frac{ \hbar}{2} C^{\al \be} \frac{\del}{\del q^\al} \frac{\del}{\del q^\beta} \right)\psi(q).
\]
\begin{rem}\label{lowert}
It is worth noting that this expression can be evaluated using Feynman diagram techniques, in which each diagram corresponds to a term in the Taylor series expansion of the exponential; we will discuss this further in Section~\ref{ltc}.
\end{rem}
\end{ex}

In fact, the above three examples let us compute $U_T$ for any symplectic matrix $T= \left(\begin{array}{cc} A& B \\ C&D \end{array} \right) = \exp\left(\begin{array}{cc} a& b\\ c & d \end{array}\right)$ for which the lower-right submatrix $D$ is invertible.\footnote{Throughout this text, we will assume for convenience that $D$ is invertible.  However, if this is not the case, one can still obtain similar formulas by decomposing the matrix differently.}  To do so, decompose $T$ as
\[T = \left(\begin{array}{cc} A & B\\ C & D\end{array}\right) = \left(\begin{array}{cc}I & BD^{-1}\\ 0 & I \end{array}\right)\cdot \left(\begin{array}{cc}I & 0 \\ DC^T & I \end{array}\right)\cdot\left(\begin{array}{cc}D^{-T} & 0 \\ 0 & D\end{array}\right).\]
Each of the matrices on the right falls into one of the cases we calculated above.  Furthermore, the quantization procedure satisfies $U_{T_1 \circ T_2} = U_{T_1} \circ U_{T_2}$ (up to a constant), since both sides satisfy (\ref{satisfy}) and (\ref{satisfy2}).  Thus, the formula for the quantization $U_T$ of any such $T$ is as follows:
\[(U_T\psi)(q) = \exp\left(\frac{1}{2\hbar}(BD^{-1})_{\alpha\beta}q^{\alpha}q^{\beta}\right) \exp\left(-\frac{\hbar}{2}(DC^T)^{\alpha\beta}\frac{\d}{\d q^{\alpha}} \frac{\d}{\d q^{\beta}}\right)\psi((D^{-T})^{\alpha}_{\beta}q),\]
or, in matrix notation,
\[(U_T\psi)(q) = \exp\left(\frac{1}{2\hbar}(BD^{-1}q)\cdot q\right)\exp\left(-\frac{\hbar}{2}\left(DC^T\frac{\d}{\d q}\right)\cdot \frac{\d}{\d q}\right)\psi(D^{-1}q).\]

One should be somewhat careful with this expression, since the two exponentials have, respectively, infinitely many negative powers of $\hbar$ and infinitely many positive powers of $\hbar$, so {\it a priori} their composition may have some powers of $\hbar$ whose coefficients are divergent series.  Avoiding this issue requires one to apply each quantized operator to $\psi(D^{-1}q)$ in turn, verifying at each stage that the coefficient of every power of $\hbar$ converges.  A similar issue will arise when dealing with powers of the variable $z$ in the infinite-dimensional setting; see Section \ref{convergence} of Chapter 3.

\section{Quantization via quadratic Hamiltonians}
\label{Ham}

Before moving on to other expressions for the quantization $U_T$, let us briefly observe that the formulas obtained in the previous section can be described in a much simpler fashion by referring to the terminology of Hamiltonian mechanics.  We have preferred the longer derivation via the \CCR because it more clearly captures the ``obvious" functoriality one would desire from the quantization procedure, but the Hamiltonian perspective is the one that is typically taken in discussions of quantization in Gromov-Witten theory (see, for example, \cite{Coates} or \cite{GiventalQuadraticHamiltonians}).

Let $T = \exp(F)$ be a symplectomorphism as above, where
\[
F = \left(\begin{matrix} 
a & b\\ c & d 
\end{matrix}\right)
\]
is an infinitesimal symplectic transformation.  Because the tangent space to a symplectic vector space at any point is canonically identified with the vector space itself, we can view $F: V \rightarrow V$ as a vector field on $V$.  If $\omega$ is the $2$-form giving the symplectic structure, the contraction $\iota_F \omega$ is a $1$-form on $V$.  Since $V$ is topologically contractible, we can write $\iota_F \omega = dh_F$ for some function $h_F: V \rightarrow \R$.  This function is referred to as the {\bf Hamiltonian} of $F$.  Concretely, it is described by the formula
\[
h_F(v) = \frac{1}{2} \langle Fv, v \rangle
\]
for $v \in V$, where $\langle \;, \; \rangle$ is the symplectic pairing.

Being a classical observable, the quantization of the function $h_F: V \rightarrow \R$ has already been defined.  Define the quantization of $F$ by
\[
\hat{F} = \frac{1}{\hbar} \hat{h_F}.
\]
The quantization of the symplectomorphism $T$ is then defined as
\[
U_T = \exp(\hat{F}).
\]

It is an easy exercise to check that $\hat{F}$ agrees with the general formula given by \eqref{don't stop believing}, so the two definitions of $U_T$ coincide.

One advantage of the Hamiltonian perspective is that it provides a straightforward way to understand the noncommutativity of the quantization procedure for infinitesimal symplectic transformations.  Recall, the quantization of quadratic observables obeys the commutation relation
\[[Q(f), Q(g)] = \hbar Q(\{f,g\}) + \hbar^2 \mathcal{C}(f,g)\]
for the cocycle $\mathcal{C}$ defined in \eqref{cocycle}.  It is easy to check (for example, by working in Darboux coordinates) that the Hamiltonian $h_A$ associated to an infinitesimal symplectic transformation satisfies
\[\{h_A, h_B\} = h_{[A,B]}.\]
Thus, the commutation relation for infinitesimal symplectic transformations is
\[[\hat{A}, \hat{B}] = \widehat{[A,B]} +  \mathcal{C}(h_A, h_B).\]
For an explicit computation of this cocycle in an (infinite-dimensional) case of particular interest, see Example 1.3.4.1 of \cite{Coates}.

\section{Integral formulas}
\label{integrals}

The contents of this section are based on lectures given by Xiang Tang at the RTG Workshop on Quantization at the University of Michigan in December 2011.  In a more general setting, the material is discussed in \cite{BatesWeinstein}.

Our goal is to obtain an alternative expression for $U_T$ of the form
\begin{equation}
\label{intformula}
(U_T\psi)(q) = \lambda \int_{\R^n} \int_{\R^n} e^{\frac{1}{\hbar}(\phi(q, p') - q' \cdot p')} \psi(q')dq'dp'
\end{equation}
for a function $\phi: \R^{2n} \rightarrow \R$ and a constant $\lambda \in \R[[\hbar, \hbar^{-1}]]$ to be determined.  Such operators, since they generalize the Fourier transform of $\psi$, are known as {\bf Fourier integral operators}.

The advantage of this alternate expression for $U_T$ is twofold.  First, they allow quantized operators to be expressed as sums over Feynman diagrams, and this combinatorial expansion will be useful later, especially in Section \ref{semiclassical}.  Second, the notion of a Fourier integral operator generalizes to the case when the symplectic diffeomorphism is not necessarily linear, as well as to the case of a Lagrangian submanifold of the cotangent bundle that is not the graph of any symplectomorphism; we will comment briefly on these more general settings in Section \ref{generalizations} below.

To define $\phi$, first let $\Gamma_T$ be the graph of $T$:
\[
\Gamma_T = \left\{(p, q, \tilde{p}, \tilde{q})\; \left| \; \begin{array}{c} p_{\alpha} = A_{\alpha}^{\beta} \tilde{p}_{\beta} + B_{\alpha\beta}\tilde{q}^{\beta}\\ q^{\alpha} = C^{\alpha\beta}\tilde{p}_{\beta} + D^{\alpha}_{\beta} \tilde{q}^{\beta} \end{array}\right.\right\} \subset \overline{\R^{2n}} \times \R^{2n}.
\]
Here, $\overline{\R^{2n}}$ denotes the symplectic vector space obtained by equipping $\R^{2n}$ with the opposite of the standard symplectic form, so that the symplectic form on the product is given by
\[
\langle(p, q, \tilde{p}, \tilde{q}), (P, Q, \tilde{P}, \tilde{Q})\rangle = \sum_i\big(-p_iQ^i + P_iq^i + \tilde{p}_i \tilde{Q}^i - \tilde{P}_i\tilde{q}^i\big).
\]
Under this choice of symplectic form, $\Gamma_T$ is a Lagrangian submanifold of the product.

There is an isomorphism of symplectic vector spaces
\begin{align*}
\overline{\R^{2n}} \times \R^{2n} &\xrightarrow{\sim} T^*(\R^{2n})\\
(p,q,\tilde{p}, \tilde{q}) &\mapsto (q, \tilde{p}, p, \tilde{q}),
\end{align*}
where $T^*(\R^{2n})$ is equipped with the canonical symplectic form
\[
\langle(q, p, \pi, \xi), (Q, P, \Pi, \Xi)\rangle = \sum_i \big(q^i \Pi_i + p_i\Xi^i - Q^i\pi_i - P_i\xi^i\big).
\]
Thus, one can view $\Gamma_T$ as a Lagrangian submanifold of the cotangent bundle.  Define $\phi = \phi(q, p')$ as the generating function for this submanifold.  Explicitly, this says that
\[
 \left\{(q, \tilde{p}, p, \tilde{q}))\; \left| \; \begin{array}{c} p_{\alpha} = A_{\alpha}^{\beta} \tilde{p}_{\beta} + B_{\alpha\beta}\tilde{q}^{\beta}\\ q^{\alpha} = C^{\alpha\beta}\tilde{p}_{\beta} + D^{\alpha}_{\beta} \tilde{q}^{\beta} \end{array}\right.\right\} = \left\{ \left(q, p',  \frac{\d \phi}{\d q}, \frac{\d \phi}{\d p'}\right)\right\}.
\]

Let us restrict, similarly to Section \ref{CCR}, to the case in which $D$ is invertible.  The relations defining $\Gamma_T$ can be rearranged to give
\begin{align*}
\tilde{q} &=D^{-1}q - D^{-1}C\tilde{p}\\
p &= BD^{-1}q + D^{-T}\tilde{p}.
\end{align*}
Therefore, $\phi(q, p')$ is defined by the system of partial differential equations
\begin{align*}
\frac{\d \phi}{\d q} &= BD^{-1}q + D^{-T} p'\\
\frac{\d \phi}{\d p'} &= D^{-1} q - D^{-1}Cp'.
\end{align*}
These are easily solved; up to an additive constant, one obtains
\[
\phi(q,p') =\frac{1}{2}(BD^{-1}q)\cdot q + (D^{-1}q) \cdot p' - \frac{1}{2}(D^{-1}Cp')\cdot p'.
\]

The constant $\lambda$ in the definition of $U_T$ is simply a normalization factor, and is given by
\[
\lambda = \frac{1}{\hbar^n},
\]
as this will be necessary to make the integral formulas match those computed in the previous section.  We thus obtain the following definition for $U_T$:
\begin{equation}
\label{integralformula}
(U_T\psi)(q) = \frac{1}{\hbar^n} \int_{\R^n}\int_{\R^n} e^{\frac{1}{\hbar}( \frac{1}{2}(BD^{-1}q)\cdot q + (D^{-1}q) \cdot p' - \frac{1}{2}(D^{-1}Cp')\cdot p' -q' \cdot p')} \psi(q')dq' dp'.
\end{equation}

We should verify that this formula agrees with the one obtained in Section \ref{CCR}.  This boils down to properties of the Fourier transform, which we will define as
\[\hat{\psi}(y) = \int_{\R^n} e^{-ix \cdot y} \psi(x) dx.\]
Under this definition,
\begin{align*}
(U_T\psi)(q) &= \frac{ e^{\frac{1}{2\hbar}(BD^{-1}q)\cdot q}}{\hbar^n} \int_{\R^n}\int_{\R^n} e^{\frac{1}{\hbar}(D^{-1}q)\cdot p'} e^{-\frac{1}{2\hbar}(D^{-1}Cp')\cdot p'} e^{-\frac{1}{\hbar}q' \cdot p'} \psi(q')dq'dp'\\
&=i^n e^{\frac{1}{2\hbar}(BD^{-1}q)\cdot q}\int_{i\R^n} e^{i(D^{-1}q)\cdot p''} e^{\frac{\hbar}{2}(D^{-1}C p'') \cdot p''} \hat{\psi}(p'') dp''\\
&= e^{\frac{1}{2\hbar}(BD^{-1}q)\cdot q}\int_{\R^n} e^{i Q \cdot p'''} e^{-\frac{\hbar}{2} (D^{-1}Cp''') \cdot p'''} \widehat{\psi \circ i}(p''') dp'''\\
&= e^{\frac{1}{2\hbar}(BD^{-1}q)\cdot q}\int_{\R^n} e^{i Q \cdot p'''} \bigg(e^{-\frac{\hbar}{2} (D^{-1}C\frac{\d}{\d q'}) \cdot \frac{\d}{\d q'}} (\psi) \circ i\bigg)^{\wedge}(p''') dp'''\\
&=e^{\frac{1}{2\hbar}(BD^{-1}q)\cdot q} e^{-\frac{\hbar}{2}\left(DC^T \frac{\d}{\d q}\right)\cdot \frac{\d}{\d q}} \psi(D^{-1}q),
\end{align*}
where we use the changes of variables $\frac{1}{\hbar}p' = ip''$, $ip'' = p'''$, and $D^{-1}q = iQ$.  The integral formula therefore matches the one defined via the \CCR.

\section{Expressing integrals via Feynman diagrams}
\label{Feynman}

The formula given for $U_T\psi$ in \eqref{integralformula} bears a striking resemblace to the type of integral computed by Feynman's theorem, and in this section, we will make the connection precise.

\subsection{Genus-modified Feynman's Theorem}

In order to apply Feynman's theorem, the entire integrand must be an exponential, so we will assume that
\[
\psi(q) = e^{\frac{1}{\hbar}f(q)}.
\]
Furthermore, let us assume that $f(q)$ is $\hbar^{-1}$ times a power series in $\hbar$, so $\psi$ is of the form
\[
\psi(q) = e^{\sum_{g \geq 0} \hbar^{g-1} \mathcal{F}_g(q)}.
\]
Then
\[
(U_T\psi)(q) = e^{\frac{1}{2\hbar}(BD^{-1}q) \cdot q}\; \hbar^{-n} \int_{\R^{2n}} e^{-\frac{1}{\hbar}S(p',q')}dp'dq'
\]
with
\[
S(p',q') = \bigg(- (D^{-1}q) \cdot p' + \frac{1}{2}(D^{-1}Cp') \cdot p' + q'\cdot p'\bigg) - \sum_{g \geq 0} \hbar^g \mathcal{F}_g(q').
\]
Note that if we let $y = (p',q')$, then the bilinear leading term of $S(p',q')$ (in parentheses above) is equal to 
\[
\frac{1}{2}(D^{-1}Cp') \cdot p' + \frac{1}{2}(q' - D^{-1}q) \cdot p' + \frac{1}{2}p' \cdot (q' - D^{-1}q) = \frac{\mathcal{B}(y - D^{-1}q, y - D^{-1}q)}{2},
\]
where $\mathcal{B}(y_1, y_2)$ is the bilinear form given by the block matrix
\[
\left(\begin{matrix} 
D^{-1}C & I\\ I & 0
\end{matrix}\right)
\]
and $q = (0, q) \in \R^{2n}$.

Changing variables, we have
\[
(U_T\psi)(q) = e^{\frac{1}{2\hbar}(BD^{-1}q) \cdot q} \; \hbar^{-n} \int_{\R^{2n}} e^{-\frac{1}{\hbar}\left(\frac{-\mathcal{B}(y,y)}{2} - \sum_{g \geq 0} \hbar^g \mathcal{F}_g(q' + D^{-1}q)\right)}dy.
\]
Each of the terms $-\mathcal{F}_g(q' + D^{-1}q)$ can be decomposed into pieces that are homogeneous in $q'$:
\[
-\mathcal{F}_g(q' + D^{-1}q) = \sum_{m \geq 0} \frac{1}{m!}\left(- \d^m\left.\mathcal{F}_g\right|_{q' = D^{-1}q} \cdot (q')^m\right),
\]
where $ -\d^m\mathcal{F}_g|_{q' = -D^{-1}q} \cdot (q')^m$ is short-hand for the $m$-tensor
\[
B_{g,m} = -\sum_{|\mathbf{m}| = m} \frac{m!}{m_1! \cdots m_n!} \left.\frac{\d^m \mathcal{F}_g}{(\d q_1')^{m_1} \cdots (\d q_n')^{m_n}}\right|_{q' = D^{-1}q}  (q_1')^{m_1} \cdots (q_n')^{m_n},
\]
in which the sum is over all $n$-tuples $\mathbf{m} = (m_1, \ldots, m_n) \in \Z_{\geq 0}^n$ such that $|\mathbf{m}| = m_1 + \cdots + m_n = m$.  We consider this as an $m$-tensor in $q'$ whose coefficients involve a formal parameter $q$.

Thus, we have expressed the quantized operator as
\[
(U_T\psi)(q) =  e^{\frac{1}{2\hbar}(BD^{-1}q) \cdot q} \; \hbar^{-n} \int_{\R^{2n}} e^{-\frac{1}{\hbar}\left( \frac{-\mathcal{B}(y,y)}{2} + \sum_{g,m \geq 0} \frac{\hbar^g}{m!} B_{g,m}(q', \ldots, q')\right)}dy.
\]
This is essentially the setting in which Feynman's theorem applies, but we  must modify Feynman's theorem to allow for the presence of powers of $\hbar$ in the exponent. This is straightforward, but nevertheless interesting, as it introduces a striking interpretation of $g$ as recording the ``genus" of vertices in a graph.

\begin{thm}
\label{modFeynman}
Let $V$ be a vector space of dimension $d$, and let
\[
S(x) = \frac{1}{2}B(x,x) + \sum_{g,m \geq 0} \frac{\hbar^g}{m!}B_{g,m}(x, \ldots, x),
\]
in which each $B_{g,m}$ is an $m$-multilinear form and $B_{0,0} = B_{0,1} = B_{0,2} = 0$.  Consider the integral
\[
Z = \hbar^{-\frac{d}{2}} \int_V e^{-S(x)/\hbar} dx.
\]
Then
\[
Z = \frac{(2\pi)^{d/2}}{\sqrt{\det(B)}} \sum_{ \mathbf{n} = (n_0, n_1, \ldots )} \sum_{\Gamma \in G'(\mathbf{n})} \frac{\hbar^{-\chi_{\Gamma}}}{|\text{Aut}(\Gamma)|} F_{\Gamma},
\]
where $F_{\Gamma}$ is the genus-modified Feynman amplitude, defined below.
\end{thm}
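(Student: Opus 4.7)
The plan is to mimic the proof of the original Feynman's theorem essentially verbatim, treating the genus label on each vertex as an additional bookkeeping decoration. First, I would perform the change of variables $y = x/\sqrt{\hbar}$, which, together with $dx = \hbar^{d/2}\,dy$, cancels the prefactor $\hbar^{-d/2}$ and yields
\[
Z = \int_V \exp\!\left(-\tfrac{1}{2}B(y,y) - \sum_{g,m\geq 0} \tfrac{\hbar^{g+m/2-1}}{m!} B_{g,m}(y,\ldots,y)\right) dy.
\]
The hypothesis $B_{0,0} = B_{0,1} = B_{0,2} = 0$ ensures that every exponent $g + m/2 - 1$ appearing here is a nonnegative (half-)integer, so expanding the second exponential as a Taylor series in its argument and interchanging sums produces a well-defined formal power series in $\hbar$ whose coefficients are Gaussian integrals of the form needed for Wick's theorem.

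Next, I would apply Wick's theorem term by term. After expansion one obtains, for each finitely supported array $\mathbf{n} = (n_{g,m})$, an integral whose integrand is a product of $N = \sum_{g,m} m\,n_{g,m}$ linear forms against $e^{-B(y,y)/2}$. Representing each $-B_{g,m}(y,\ldots,y)$ as a ``flower'' with $m$ half-edges and label $(g,m)$, a pairing $\sigma \in \Pi_N$ becomes a way of joining half-edges and produces a genus-and-valence-decorated graph $\Gamma(\sigma)$; contracting the vertex tensors along edges via $B^{-1}$ (as in Wick's theorem) is exactly the recipe for the genus-modified Feynman amplitude $F_\Gamma$. The combinatorial accounting is identical to the ungraded case: the group
\[
G = \prod_{g,m} \bigl(S_m^{n_{g,m}} \rtimes S_{n_{g,m}}\bigr)
\]
of order $\prod_{g,m}(m!)^{n_{g,m}} n_{g,m}!$ acts transitively on the set of pairings producing a fixed $\Gamma$, with stabilizer $\text{Aut}(\Gamma)$, so the multiplicity $|G|/|\text{Aut}(\Gamma)|$ exactly cancels the Taylor-coefficient prefactor $\prod 1/\bigl((m!)^{n_{g,m}} n_{g,m}!\bigr)$, leaving only $1/|\text{Aut}(\Gamma)|$.

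The one genuinely new step, and the place I expect the main bookkeeping obstacle, is tracking the total power of $\hbar$. For a graph $\Gamma$ with $n_{g,m}$ vertices of type $(g,m)$, the Taylor expansion contributes
\[
\hbar^{\sum_{g,m} n_{g,m}(g + m/2 - 1)} = \hbar^{G_\Gamma + N/2 - |V(\Gamma)|} = \hbar^{G_\Gamma + |E(\Gamma)| - |V(\Gamma)|},
\]
where $G_\Gamma := \sum_{g,m} g\,n_{g,m}$ is the total genus. Defining $\chi_\Gamma := |V(\Gamma)| - |E(\Gamma)| - G_\Gamma$, i.e.\ the topological Euler characteristic of the $2$-complex obtained by attaching a closed orientable surface of genus $g$ at each genus-$g$ vertex, this exponent is precisely $-\chi_\Gamma$. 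Combining this with the overall Wick normalization $(2\pi)^{d/2}/\sqrt{\det B}$ produced by each Gaussian integral then yields the stated formula. Note that when all vertices have genus zero, $G_\Gamma = 0$ and $-\chi_\Gamma = |E| - |V| = b(\Gamma)$, recovering the original Feynman's theorem as a special case.
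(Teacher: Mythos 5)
Your proposal is correct and follows essentially the same route as the paper: change variables by $y=x/\sqrt{\hbar}$, expand the exponential, apply Wick's theorem, run the orbit--stabilizer count on pairings to cancel the $\prod (m!)^{n_{g,m}} n_{g,m}!$, and track the power of $\hbar$ as $\sum_{g,m}n_{g,m}\bigl(g+\tfrac{m}{2}-1\bigr) = \sum_v g(v) + |E(\Gamma)| - |V(\Gamma)| = -\chi_\Gamma$. The only difference is cosmetic: the paper simply says ``reiterate the proof of Feynman's theorem'' and records the final $\hbar$-exponent, while you spell out the flower/pairing bookkeeping and note explicitly why the hypothesis $B_{0,0}=B_{0,1}=B_{0,2}=0$ keeps the exponents nonnegative.
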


Here, $G'(\mathbf{n})$ denotes the set of isomorphism classes of graphs with $n_i$ vertices of valence $i$ for each $i \geq 0$, in which each vertex $v$ is labeled with a genus $g(v) \geq 0$.  Given $\Gamma \in G'(\mathbf{n})$, the {\bf genus-modified Feynman amplitude} is defined by the following procedure:
\begin{enumerate}
\item Put the $m$-tensor $-B_{g,m}$ at each $m$-valent vertex of genus $g$ in $\Gamma$.
\item For each edge $e$ of $\Gamma$, take the contraction of tensors attached to the vertices of $e$ using the bilinear form $B^{-1}$.  This will produce a number $F_{\Gamma_i}$ for each connected component $\Gamma_i$ of $\Gamma$.
\item If $\Gamma = \bigsqcup_{i} \Gamma_i$ is the decomposition of $\Gamma$ into connected components, define $F_{\Gamma} = \prod_i F_{\Gamma_i}$.
\end{enumerate}
Furthermore, the {\bf Euler characteristic} of $\Gamma$ is defined as
\[\chi_{\Gamma} =- \sum_{v \in V(\Gamma)} g(v) + |V(\Gamma)| - |E(\Gamma)|.\]

Having established all the requisite notation, the proof of the theorem is actually easy.

\begin{proof}[Proof of Theorem \ref{modFeynman}]
Reiterate the proof of Feynman's theorem to obtain
\[
Z = \sum_{\mathbf{n}= (n_{g,m})_{g,m \geq 0}} \left(\prod_{g,m \geq 0} \frac{\hbar^{gn_{g,m} + (\frac{m}{2} -1)n_{g,m}}}{(m!)^{n_{g,m}} n_{g,m}!}\right) \int_V e^{-B(y,y)/2} \prod_{g,m \geq 0} (-B_{g,m})^{n_{g,m}}dy.
\]
As before, Wick's theorem shows that the integral contributes the desired summation over graphs, modulo factors coming from over-counting.  A similar orbit-stabilizer argument shows that these factors precisely cancel the factorials in the denominator.  The power of $\hbar$ is
\[
\sum_{g \geq 0} g\left(\sum_{m \geq 0} n_{g,m}\right) + \sum_{m \geq 0}\left(\frac{m}{2} -1\right) \left(\sum_{g\geq 0} n_{g,m}\right),
\]
which is the sum of the genera of the vertices plus the number of edges minus the number of vertices, or in other words, $-\chi_{\Gamma}$, as required.
\end{proof}

It should be noted that for the proof of the theorem, there is no particular reason to think of $g$ as recording the genus of a vertex---it is simply a label associated to the vertex that records which of the $m$-tensors $B_{g,m}$ one attaches.  The convenience of the interpretation of $g$ as genus comes only from the fact that it simplifies the power of $\hbar$ neatly.

\subsection{Feynman diagram formula for $U_T$}

We are now ready to give an expression for $U_T \psi$ in terms of Feynman diagrams.  In order to apply Theorem \ref{modFeynman}, we must make one more assumption: that $\mathcal{F}_0$ has no terms of homogeneous degree less than $3$ in $q'$.  Assuming this, we obtain the following expression by directly applying the theorem:
\begin{equation}
\label{Feynmanfinal}
(U_T\psi)(q) = \frac{(2\pi)^n e^{\frac{1}{2\hbar}(BD^{-1}q) \cdot q}}{\sqrt{\det(D^{-1}C)}} \sum_{\mathbf{n} = (n_0, n_1, \ldots )} \sum_{\Gamma \in G'(\mathbf{n})} \frac{\hbar^{-\chi_{\Gamma}}}{|\text{Aut}(\Gamma)|} F_{\Gamma}(q),
\end{equation}
where $F_{\Gamma}(q)$ is the genus-modified Feynman amplitude given by placing the $m$-tensor
\[
\sum_{|\mathbf{m}| = m} \frac{m!}{m_1! \cdots m_n!} \left.\frac{\d^m \mathcal{F}_g}{(\d q_1')^{m_1} \cdots (\d q_n')^{m_n}}\right|_{q' = D^{-1}q}  (q_1')^{m_1} \cdots (q_n')^{m_n}
\]
at each $m$-valent vertex of genus $g$ in $\Gamma$ and taking the contraction of tensors using the bilinear form
\[
\left(\begin{matrix} 
D^{-1}C & I\\ I & 0 
\end{matrix}\right)^{-1} 
= -\left(\begin{matrix}
 0 & I\\ I & D^{-1}C
 \end{matrix}\right).
\]
In fact, since this bilinear form is only ever applied to vectors of the form $(0, q')$, we are really only taking contraction of tensors using the bilinear form $-D^{-1}C$ on $\R^n$.

\subsection{Connected graphs}

Recall from Theorem \ref{connected} that the logarithm of a Feynman diagram sum yields the sum over only connected graphs.  That is:
\[
(U_T\psi)(q) = \frac{(2\pi)^n e^{\frac{1}{2\hbar}(BD^{-1}q)\cdot q}}{\sqrt{\det(D^{-1}C})} \exp\left( \sum_{\Gamma \in G'_c} \frac{\hbar^{-\chi_{\Gamma}}}{|\text{Aut}(\Gamma)|} F_{\Gamma}(q)\right),
\]
where $G'_c$ denotes the set of isomorphism classes of {\it connected} genus-labeled graphs.  Thus, writing
\[
\overline{\mathcal{F}}_g(q) = \sum_{\Gamma \in G'_c(g)} \frac{1}{|\text{Aut}(\Gamma)|}F_{\Gamma}(q)
\]
with $G'_c(g)$ collecting connected graphs of genus $g$, we have:
\[
(U_T \psi)(q) = \frac{(2\pi)^n e^{\frac{1}{2\hbar}(BD^{-1}q)\cdot q}}{\sqrt{\det(D^{-1}C})} \exp\left(\sum_{g \geq 0} \hbar^{g-1}\overline{\mathcal{F}}_g(q) \ \right).
\]
For those familiar with Gromov-Witten theory, this expression should be salient---we will return to it in Chapter~\ref{infinite} of the book.

\subsection{The lower-triangular case}\label{ltc}
At this point, we can return to a remark made previously (Remark ~\ref{lowert}), regarding the computation of $U_T \psi$ when $T$ is lower-triangular.  In that case, we may express the quantization formula obtained via the \CCR  as a graph sum in a rather different way.  We explain how to do this below, and show that we ultimately obtain the same graph sum as that which arises from applying Feynman's Theorem to the integral operator. 

Let
\[
T = \left( \begin{matrix} 
I & 0 \\ C & I 
\end{matrix}\right).
\]
We showed in Section \ref{CCR} that
\[
(U_T\psi)(q) = \exp\left(-\frac{ \hbar}{2} C^{\alpha\beta} \frac{\d}{\d q^{\alpha}} \frac{\d}{\d q^{\beta}}\right) \psi(q).
\]
Suppose that $\psi(q) = e^{\sum_{g \geq 0} \hbar^{g-1}\mathcal{F}_g(q)}$ as above and expand both exponentials in Taylor series.  Then $(U_T\psi)(q)$ can be expressed as:
\begin{equation}
\label{UTpsi}
\sum_{\{i_{\alpha,\beta}\}, \{\ell_g\}} \frac{\hbar^{\sum i_{\alpha\beta} + \sum \ell_g(g-1)}}{\prod i_{\alpha\beta}!\prod \ell_g!} \prod_{\alpha,\beta}\left(-\frac{C^{\alpha\beta}}{2}\right)^{i_{\alpha\beta}} \left(\frac{\d}{\d q^{\alpha}} \frac{\d}{\d q^{\beta}}\right)^{i_{\alpha\beta}} \prod_g(\mathcal{F}_g(q))^{\ell_g}.
\end{equation}

Whenever a product of quadratic differential operators acts on a product of functions, the result can be written as a sum over diagrams.  As an easy example, suppose one wishes to compute
\[
\frac{\d^2}{\d x \d y}(fgh)
\]
for functions $f,g,h$ in variables $x$ and $y$.  The product rule gives nine terms, each of which can be viewed as a way of attaching an edge labeled

\noindent
\tikzstyle vertex=[circle, draw, fill=black!50,
inner sep=0pt, minimum width=4pt]
\begin{minipage}{\linewidth}
\centering
\begin{tikzpicture}[thick,scale=1]
\draw \foreach \x in {0}
{
(\x, 0) node[vertex] {} -- (\x+2,0) node[vertex] {}
};
\foreach \y in {0}
{
\node[above] at (\y,0){$\frac{\d}{\d x}$};
}
\foreach \y in {2}
{
\node[above] at (\y,0){$\frac{\d}{\d y}$};
}
\end{tikzpicture}\quad
\end{minipage}

\vspace{0.1cm}

\noindent to a collection of vertices labeled $f, g,$ and $h$.  (We allow both ends of an edge to be attached to the same vertex.)

Applying this general principle to the expression (\ref{UTpsi}), one can write each of the products $\prod (-\frac{C^{\alpha\beta}}{2})^{i_{\alpha\beta}}\prod\left(\frac{\d}{\d q^{\alpha}} \frac{\d }{\d q^{\beta}}\right)^{i_{\alpha\beta}} \mathcal{F}_g(q)$ as a sum over graphs obtained by taking $\ell_g$ vertices of genus $g$ for each $g$, with vertices of genus $g$ labeled $\mathcal{F}_g$, and attaching $i_{\alpha\beta}$ edges labeled
\tikzstyle vertex=[circle, draw, fill=black!50,
inner sep=0pt, minimum width=4pt]

\noindent
\begin{minipage}{\linewidth}
\centering
\begin{tikzpicture}[thick,scale=1]
\draw \foreach \x in {0}
{
(\x, 0) node[vertex] {} -- (\x+3,0) node[vertex] {}
};
\foreach \y in {0}
{
\node[above] at (\y,0){$\frac{\d}{\d x}$};
}
\foreach \y in {1.5}
{
\node[above] at (\y,0){$-C^{\alpha\beta}/2$};
}
\foreach \y in {3}
{
\node[above] at (\y,0){$\frac{\d}{\d y}$};
}
\end{tikzpicture}\quad
\end{minipage}
in all possible ways.  Each possibility gives a graph $\hat{Gamma}$.  It is a combinatorial exercise to check that the contributions from all choices of $\hat{\Gamma}$ combine to give a factor of $\frac{\prod i_{\alpha\beta}! \cdot \prod \ell_g!}{|\text{Aut}(\hat{\Gamma)}|}$.

Thus, we have expressed $(U_T\psi)(q)$ as
\[
\sum_{\{i_{\alpha\beta}\}, \{\ell_g\}, \hat{\Gamma}} \frac{\hbar^{-\chi_{\hat{\Gamma}}}}{|\text{Aut}(\hat{\Gamma})|} G_{\mathbf{i}, \mathbf{\ell}, \hat{\Gamma}}(q),
\]
where $G_{\mathbf{i}, \mathbf{\ell},\hat{\Gamma}}(q)$ is obtained by way of the above procedure.  This is essentially the Feynman amplitude computed previously, but there is one difference: $G_{\mathbf{i}, \mathbf{\ell}, \Gamma}(q)$ is computed via edges whose two ends are {\it labeled}, whereas Feynman diagrams are unlabeled.  By summing up all possible labelings of the same unlabeled graph $\Gamma$, we can rewrite this as
\[
(U_T\psi)(q) = \sum_{\Gamma} \frac{\hbar^{-\chi_{\Gamma}}}{|\text{Aut}(\Gamma)|} G_{\Gamma},
\]
where $G_{\Gamma}$ is the Feynman amplitude computed by placing the $m$-tensor
\[
\sum_{|\mathbf{m}| = m} \frac{m!}{m_1! \cdots m_n!} \left.\frac{\d^m \mathcal{F}_g}{(\d q_1')^{m_1} \cdots (\d q_n')^{m_n}}\right|_{q' = q}  (q_1')^{m_1} \cdots (q_n')^{m_n}
\]
at each $m$-valent vertex of genus $g$ in $\Gamma$ and taking the contraction of tensors using the bilinear form $-\frac{1}{2}(C + C^T) = - C$.  Up to a multiplicative constant, which we can ignore, this matches the Feynman diagram expansion obtained previously.

\section{Generalizations}
\label{generalizations}

As remarked in Section \ref{integrals}, one advantage of the integral formula representation of a quantized operator is that it generalizes in at least two ways beyond the cases considered here.

First, if $T$ is a symplectic diffeomorphism that is not necessarily linear, one can still define $\phi$ as the generating function of the graph of $T$, and under this definition, the integral in \eqref{intformula} still makes sense.  Thus, in principle, integral formulas allow one to define the quantization of an arbitrary symplectic diffeomorphism.  As it turns out, the formula in (\ref{intformula}) is no longer quite right in this more general setting; the constant $\lambda$ should be allowed to be a function $b = b(q, p', \hbar)$ determined by $T$ and its derivatives.  Nevertheless, an integral formula can still be obtained.  This is very important from a physical perspective, since the state space of classical mechanics is typically a nontrivial symplectic manifold.  While one can reduce to the case of $\R^{2n}$ by working locally, the quantization procedure should be functorial with respect to arbitrary symplectic diffeomorphisms, which can certainly be nonlinear even in local coordinates.

A second possible direction for generalization is that the function $\phi$ need not be the generating function of the graph of a symplectomorphism at all.  Any Lagrangian submanifold $L \subset T^*(\R^{2n})$ has a generating function, and taking $\phi$ to be the generating function of this submanifold, (\ref{intformula}) gives a formula for the quantization of  $L$.

\chapter{Interlude: Basics of Gromov-Witten theory}

In order to apply the formulas for $U_T\psi$  to obtain results in Gromov-Witten theory, it is necessary to quantize infinite-dimensional symplectic vector spaces.  Thus, we devote Chapter \ref{infinite} to the infinite-dimensional situation, discussing how to adapt the finite-dimensional formulas and how to avoid issues of convergence.  Before doing this, however, we pause to give a brief overview of the basics of Gromov-Witten theory.  Although this material will not be strictly necessary until Chapter 4, we include it now to motivate our interest in the infinite-dimensional case and the specific assumptions made in the next chapter.

\section{Definitions}

The material of this section can be found in any standard reference on Gromov-Witten theory, for example \cite{CoxKatz} or \cite{Hori}.

Let $X$ be a projective variety.  Roughly speaking, the Gromov-Witten invariants of $X$ encode the number of curves passing through a prescribed collection of subvarieties.  In order to define these invariants rigorously, we will first need to define the moduli space $\M_{g,n}(X, d)$ of stable maps.

\begin{df}
A genus-$g$, $n$-pointed {\bf pre-stable curve} is an algebraic curve $C$ with $h^1(C, \O_C) = g$ and at worst nodal singularities, equipped with a choice of $n$ distinct ordered marked points $x_1, \ldots, x_n \in C$.
\end{df}

Fix non-negative integers $g$ and $n$, and a cycle $d \in H_2(X; \Z)$.

\begin{df}
A {\bf pre-stable map} of genus $g$ and degree $d$ with $n$ marked points is an algebraic map $f: C \rightarrow X$ whose domain is a genus-$g$, $n$-pointed pre-stable curve, and for which $f_*[C] = d$.  Such a map is {\bf stable} if it has only finitely many automorphisms as a pointed map; concretely, this means that every irreducible component of genus zero has at least three special points (marked points or nodes) and every irreducible component of genus one has at least one special point.
\end{df}

As eluded to in this definition, there is a suitable notion of isomorphism of stable maps.  Namely, stable maps $f: C \rightarrow X$ and $f': C' \rightarrow X$ are {\bf isomorphic} if there is an isomorphism of curves $s: C \rightarrow C'$ which preserves the markings and satisfies $f' \circ s = f$.

There is a moduli space $\M_{g,n}(X, d)$ whose points are in bijection with isomorphism classes of stable maps of genus $g$ and degree $d$ with $n$ marked points.  To be more precise, $\M_{g,n}(X, d)$ is only a coarse moduli scheme, but it can be given the structure of a Deligne-Mumford stack, and with this extra structure it is a fine moduli stack.  In general, this moduli space is singular and may possess components of different dimensions.  However, there is always an ``expected" or ``virtual" dimension, denoted vdim,
and a class $[\M_{g,n}(X,d)]^{\text{vir}} \in H_{2\text{vdim}}(\M_{g,n}(X,d))$ , called the {\bf virtual fundamental class}, which plays the role of the fundamental class for the purpose of intersection theory.  The following theorem collects some of the important (and highly non-trivial) properties of this moduli space.  

\begin{thm}
There exists a compact moduli space $\M_{g,n}(X, d)$, of virtual dimension equal to
\[\text{vdim} = (\dim(X) - 3)(1-g) + \int_d c_1(T_X) + n.\]
It admits a virtual fundamental class $[\M_{g,n}(X,d)]^{\text{vir}} \in H_{2\text{vdim}}(\M_{g,n}(X,d))$.
\end{thm}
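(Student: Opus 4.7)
The plan is to establish the existence, compactness, dimension formula, and virtual class separately, each appealing to foundational work of Kontsevich, Behrend-Manin, Li-Tian, and Behrend-Fantechi. First, to construct $\M_{g,n}(X,d)$ as a Deligne-Mumford stack, I would realize it as a substack of the relative Hom-stack over the smooth Artin stack $\mathfrak{M}_{g,n}$ of pre-stable pointed curves: over the universal curve one forms the stack of maps into $X$, restricts to those of class $d$, and imposes the stability condition on automorphisms. The key input is boundedness, which one obtains by choosing a polarization of the form $\omega_{C}(\sum x_i)^{\otimes 3} \boxtimes \O_X(1)$ on graphs of stable maps; the Hilbert polynomial then depends only on $g$, $n$, and $d$, so $\M_{g,n}(X,d)$ embeds into a quotient of an appropriate Hilbert scheme. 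Compactness follows from the valuative criterion, whose verification is the stable reduction theorem for maps: given a family over a punctured trait, perform semistable reduction on the domain, extend the map across the closed point, and contract unstable rational components (or insert rational bubbles) to obtain a unique stable limit.

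The virtual dimension is computed via deformation theory. At a stable map $[f\colon C \to X]$, the tangent-obstruction complex decomposes into two contributions: the deformation theory of the pointed pre-stable curve is governed by the smooth Artin stack $\mathfrak{M}_{g,n}$ of dimension $3g - 3 + n$, while deformations of $f$ at fixed domain are controlled by $R\Gamma(C, f^*T_X)$. Applying Riemann-Roch to the rank-$\dim(X)$ locally free sheaf $f^*T_X$ on the genus-$g$ curve $C$ yields
\[
\chi(f^*T_X) = \int_C c_1(f^*T_X) + \dim(X)(1-g) = \int_d c_1(T_X) + \dim(X)(1-g),
\]
so summing gives the stated formula $(\dim(X)-3)(1-g) + \int_d c_1(T_X) + n$.

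Finally, the virtual fundamental class arises from the Behrend-Fantechi construction applied to the relative perfect obstruction theory
\[
E^{\bullet} = (R\pi_* f^* T_X)^{\vee}[1] \longrightarrow L^{\bullet}_{\M_{g,n}(X,d)/\mathfrak{M}_{g,n}},
\]
where $\pi$ is the universal curve and $f$ the universal map; the morphism is built from the cotangent complex of $f$ together with Serre duality. The amplitude is $[-1,0]$ because $C$ is one-dimensional, so Behrend-Fantechi produce a cycle class $[\M_{g,n}(X,d)]^{\text{vir}}$ in degree $2\text{vdim}$ as the intersection of the intrinsic normal cone with the zero section of the obstruction bundle. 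The hardest part is unquestionably the properness step: stable reduction for maps is genuinely more delicate than for curves alone, since one must simultaneously extend the morphism across the central fiber and restabilize the domain by contracting or bubbling, and this requires a careful local analysis of the behavior of the map near nodes. By contrast, once the relative obstruction theory is in place the virtual class construction is essentially formal, and the dimension computation is a bookkeeping exercise in Riemann-Roch.
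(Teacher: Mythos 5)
The paper itself offers no proof of this theorem: it is stated as a summary of foundational results ("collects some of the important (and highly non-trivial) properties of this moduli space"), and the surrounding text explicitly declines to enter into the deformation theory ("which we will omit"). So there is no argument in the source to compare against; your sketch must be judged on its own terms.

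Judged that way, your outline is correct and is the standard one. The construction via the relative Hom-stack over the Artin stack $\mathfrak{M}_{g,n}$, boundedness via a polarization on graphs, properness by stable reduction for maps (correctly identified by you as the genuinely hard point), the deformation-theoretic dimension count $\dim \mathfrak{M}_{g,n} + \chi(C, f^{*}T_{X}) = (3g-3+n) + \bigl(\int_{d} c_{1}(T_{X}) + \dim(X)(1-g)\bigr)$, which regroups to the stated formula, and the Behrend--Fantechi intrinsic normal cone construction applied to a relative perfect obstruction theory of amplitude $[-1,0]$ — all of this is the accepted route. One small slip: the relative obstruction theory should be $E^{\bullet} = (R\pi_{*}f^{*}T_{X})^{\vee} \to L^{\bullet}_{\M_{g,n}(X,d)/\mathfrak{M}_{g,n}}$ \emph{without} the shift $[1]$; since $R\pi_{*}f^{*}T_{X}$ has cohomology in degrees $0$ and $1$, its derived dual already lives in amplitude $[-1,0]$, and the extra $[1]$ you wrote would push it to $[-2,-1]$. (The shift appears if one instead writes the complex via relative Serre duality as $R\pi_{*}(f^{*}T_{X}\otimes\omega_{\pi})[1]$, so this is likely just a conflation of the two presentations.) With that correction, the sketch accurately records what a full proof, distributed across Kontsevich, Behrend--Manin, Li--Tian, and Behrend--Fantechi, would require.
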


Expected dimension can be given a precise meaning in terms of deformation theory, which we will omit.  In certain easy cases, though, the moduli space is smooth and pure-dimensional, and in these cases the expected dimension is simply the ordinary dimension, and the virtual fundamental class is the ordinary fundamental class.  For example, this occurs when $g=0$ and $X$ is convex (such as the case $X = \P^r$) or when $g=0$ and $d=0$.

Gromov-Witten invariants will be defined as integrals over the moduli space of stable maps.  The classes we will integrate will come from two places.  First, there are {\bf evaluation maps}
\[\text{ev}_i: \M_{g,n}(X, d) \rightarrow X\]
for $i=1, \ldots, n$, defined by sending $(C; x_1, \ldots, x_n; f)$ to $f(x_i)$.  In fact, these set-theoretic maps are morphisms of schemes (or stacks).  Second, there are {\bf $\psi$ classes}.  To define these, let $\mathcal{L}_i$ be the line bundle on $\M_{g,n}(X, d)$ whose fiber over a point $(C; x_1, \ldots, x_n; f)$ is the cotangent line\footnote{Of course, this is only a heuristic definition, as one cannot specify a line bundle by prescribing its fibers.  To be more precise, one must consider the universal curve $\pi: \mathcal{C} \rightarrow \M_{g,n}(X,d)$.  This carries a relative cotangent line bundle $\omega_{\pi}$.  Furthermore, there are sections $s_i: \M_{g,n}(X, d) \rightarrow \mathcal{C}$ sending $(C; x_1, \ldots, x_n; f)$ to $x_i \in C \subset \mathcal{C}$.  We define $\mathcal{L}_i = s_i^*\omega_{\pi}$.} to the curve $C$ at $x_i$.  Then
\[\psi_i = c_1(\mathcal{L}_i)\]
for $i=1, \ldots, n$.

\begin{df}
Fix cohomology classes $\gamma_1, \ldots, \gamma_n \in H^*(X)$ and integers $a_1, \ldots, a_n \in \Z_{\geq 0}$.  The  corresponding {\bf Gromov-Witten invariant} (or {\bf correlator}) is
\[\langle \tau_{a_1}(\gamma_1), \cdots, \tau_{a_n}(\gamma_n) \rangle_{g,n,d}^X = \int_{[\M_{g,n}(X, d)]^{\textit{vir}}} ev_1^*(\gamma_1)\psi_1^{a_1} \cdots ev_n^*(\gamma_n) \psi_n^{a_n}.\]
\end{df}

While the enumerative significance of this integral is not immediately obvious, there is an interpretation in terms of curve-counting in simple cases.  Indeed, suppose that $\M_{g,n}(X, d)$ is smooth and its virtual fundamental class is equal to its ordinary fundamental class.  Suppose, further, that the classes $\gamma_i$ are Poincar\'e dual to transverse subvarieties $Y_i \subset X$, and that $a_i = 0$ for all $i$.  Then the Gromov-Witten invariant above is equal to the number of genus-$g$, $n$-pointed curves in $X$ whose first marked point lies on $Y_1$, whose second marked point lies on $Y_2$, et cetera.  Thus, the invariant indeed represents (in some sense) a count of the number of curves passing thorugh prescribed subvarieties.

In order to encode these invariants in a notationally parsimonious way, we write
\[\mathbf{a}^i(z) = a^i_0 + a^i_1z + a^i_2z^2 + \cdots\]
for $a^i_j \in H^*(X)$.  Then, given $\mathbf{a}^1, \ldots, \mathbf{a}^n \in H^*(X)[[z]]$, define
\[\langle \mathbf{a}^1(\psi), \ldots, \mathbf{a}^n(\psi)\rangle_{g,n,d}^X = \int_{[\M_{g,n}(X, d)]^{\text{vir}}} \left(\sum_{j=0}^{\infty} ev_1^*(a_j^1)\psi_1^j\right) \cdots \left(\sum_{j=0}^{\infty} ev_n^*(a_j^n)\psi_n^j\right).\]

Equipped with this notation, we can describe all of the genus-$g$ Gromov-Witten invariants of $X$ in terms of a generating function
\[\mathcal{F}^g_X(\mathbf{t}(z)) = \sum_{n,d \geq 0} \frac{Q^d}{n!} \langle \mathbf{t}(\psi), \ldots, \mathbf{t}(\psi)\rangle^X_{g,n,d},\]
which is a formal function of $\mathbf{t}(z) \in H^*(X)[[z]]$ taking values in the {\bf Novikov ring} $\C[[Q]]$.  Introducing another parameter $\hbar$ to record the genus, we can combine all of the genus-$g$ generating functions into one:
\[\mathcal{D}_X= \exp\left(\sum_g \hbar^{g-1} \mathcal{F}_X^g\right).\]
This is referred to as the {\bf total descendent potential} of $X$.

It is convenient to sum invariants with certain fixed insertions over all ways of adding additional insertions, as well as all choices of degree.  Thus, we define:
\[\langle \langle \mathbf{a}_1(\psi), \ldots, \mathbf{a}_n(\psi) \rangle \rangle_{g,m}^X(\tau) = \sum_{n,d} \frac{Q^d}{n!} \langle \mathbf{a}_1(\psi), \ldots, \mathbf{a}_n(\psi), s, \ldots, s \rangle_{g,n+m,d}^X\]
for specified $s \in H^*(X)$.

\section{Basic Equations}

In practice, Gromov-Witten invariants are usually very difficult to compute by hand.  Instead, calculations are typically carried out combinatorially by beginning from a few easy cases and applying a number of relations.  We state those relations in this section.  The statements of the relations can be found, for example, in \cite{Hori}, while their expressions as differential equations are given in \cite{Lee}.

\subsection{String Equation}

The string equation addresses invariants in which one of the insertions is $1 \in H^0(X)$ with no $\psi$ classes.  It states:
\begin{equation}\label{str1}\langle \tau_{a_1}(\gamma_1), \cdots ,\tau_{a_n}(\psi_n)\; 1 \rangle_{g,n+1,d}^X = \hspace{6cm} \end{equation}
\[\hspace{2cm}\sum_{i=1}^n\langle \tau_{a_1}(\gamma_1), \cdots, \tau_{a_{i-1}}(\gamma_{i-1}), \tau_{a_i -1}(\gamma_i), \tau_{a_{i+1}}(\gamma_{i+1}), \cdots, \tau_{a_n}(\gamma_n)\rangle_{g,n,d}^X\]
whenever $\M_{g,n}(X, d)$ is nonempty.

The proof of this equation relies on a result about pullbacks of $\psi$ classes under the forgetful morphism $\M_{g,n+1}(X, d) \rightarrow \M_{g,n}(X,d)$ that drops the last marked point.  Because this morphism involves contracting irreducible components of $C$ that becoming unstable after the forgetting operation, $\psi$ classes on the target do not pull back to $\psi$ classes on the source.  However, there is an explicit comparison result, and this is the key ingredient in the proof of \eqref{str1}.  See \cite{Hori} for the details.

It is a basic combinatorial fact that differentiation of the generating function with respect to the variable $t_j^i$ corresponds to adding an additional insertion of $\tau_j(\phi_i)$.  Starting from this, one may express the string equation in terms of a differential equation satisfied by the Gromov-Witten generating function.  To see this, fix a basis $\{\phi_1, \ldots, \phi_k\}$ for $H^*(X)$, and write
\[t_j = t_j^i \phi_i.\]
Then
\begin{align*}
\frac{\d }{\d t^1_0}\sum_g \hbar^{g-1}\mathcal{F}^g_X & = \sum_{g,n,d} \frac{Q^d\hbar^{g-1}}{n!} \left\langle \mathbf{t}(\psi), \ldots, \mathbf{t}(\psi), 1 \right\rangle^X_{g,n+1,d}\\
&= \sum_{g,n,d} \frac{Q^d\hbar^{g-1}}{(n-1)!} \left\langle \mathbf{t}(\psi), \ldots, \mathbf{t}(\psi), \sum_{i,j} t_{j+1}^i \tau_j(\phi_i )\right\rangle_{g,n,d}^X\\
&\hspace{2cm}+\frac{1}{2\hbar}\langle \mathbf{t}(\psi), \mathbf{t}(\psi), 1\rangle_{0,3,0}^X+ \langle 1 \rangle_{1,1,0} \\
&= \sum_{i,j} t_{j+1}^i \frac{\d }{\d t^i_j}\sum_g \hbar^{g-1} \mathcal{F}^g_X +\frac{1}{2\hbar}\langle \mathbf{t}(\psi), \mathbf{t}(\psi), 1\rangle_{0,3,0}^X+ \langle 1 \rangle_{1,1,0}. \\
\end{align*}
The ``exceptional" terms at the end arise from reindexing the summation, because the moduli spaces $\M_{g,n}(X,d)$ do not exist for $(g,n,d) = (0,2,0)$ or $(1,0,0)$.  The first  exceptional term is equal to
\[\frac{1}{2\hbar} \langle t_0, t_0\rangle^X,\]
where $\langle \; , \; \rangle^X$ denotes the Poincar\'e pairing on $X$, because the $\psi$ classes are trivial on $\M_{0,3}(X, 0)$.  The second exceptional term vanishes for dimension reasons.

Taking the coefficient of $\hbar^{-1}$, we find that $\mathcal{F}^0_X$ satisfies the following differential equation:
\begin{equation}
\label{SE}
\frac{\d \mathcal{F}^0_X}{\d t_0^1} = \frac{1}{2} \langle t_0, t_0 \rangle^X + \sum_{i,j} t^i_{j+1} \frac{\d \mathcal{F}^0_X}{\d t^i_j}.
\end{equation}

Before we continue, let us remark that the total-genus string equation can be presented in the following alternative form:
\begin{equation*}
\sum_{g,n,d} \frac{Q^d\hbar^{g-1}}{(n-1)!}\langle 1, \mathbf{t}(\psi), \ldots, \mathbf{t}(\psi)\rangle_{g,n,d}^X = \sum_{g,n,d}  \frac{Q^d\hbar^{g-1}}{(n-1)!}\left\langle \left[ \frac{\mathbf{t}(\psi)}{\psi}\right]_+ \hspace{-0.25cm},\mathbf{t}(\psi), \ldots, \mathbf{t}(\psi) \right\rangle_{g,n,d}^X
\end{equation*}
\begin{equation}
\label{trunc}
\hspace{6cm}+\frac{1}{2\hbar}\langle t_0, t_0 \rangle^X.
\end{equation}
This expression will be useful later.

\subsection{Dilaton Equation}

The dilaton equation addresses the situation in which there is an insertion of $1 \in H^*(X)$ with a first power of $\psi$ attached to it:
\[\langle \tau_{a_1}(\gamma_1), \cdots, \tau_{a_n} \tau_1(T_0) \rangle_{g,n,d}^X = (2g-2+n)\langle \tau_{a_1}(\gamma_1), \ldots, \tau_{a_n}(\gamma_n)\rangle_{g,n,d}^X.\]
Again, it can be expressed as a differential equation on the generating function.  In genus zero, the equation is:
\begin{equation}
\label{DE}
\frac{\d \mathcal{F}^0_X}{\d t_1^1} = \sum_{\substack{1 \leq i \leq k\\ j \geq 0}} t^i_j \frac{\d \mathcal{F}^0_X}{\d t_j^i} - 2\mathcal{F}^0_X.
\end{equation}
The proof is similar to the above, so we omit it.

A simple but extremely important device known as the {\bf dilaton shift} allows us to express this equation in a simpler form.  Define a new parameter $\mathbf{q}(z) = q_0 + q_1z + \cdots \in H^*(X)[[z]]$ by
\begin{equation}
\label{dilatonshift}
\mathbf{q}(z) = \mathbf{t}(z) - z,
\end{equation}
so that $q_i = t_i$ for $i \neq 1$ and $q_1 = t_1 - 1$.  If we perform this change of variables, then the dilaton equation says precisely that 
\[\sum_{\substack{1 \leq i \leq k\\ j \geq 0}} q^i_j \frac{\d \mathcal{F}^0_X}{\d q^i_j} = 2 \mathcal{F}^0_X,\]
or in other words that $\mathcal{F}^0_X(\mathbf{q}(z))$ is {\it homogeneous} of degree two.

\subsection{Topological Recursion Relations}

A more general equation relating Gromov-Witten invariants to ones with lower powers of $\psi$ is given by the topological recursion relations.  In genus zero, the relation is:
\begin{align*}
\langle \langle \tau_{a_1 + 1} (\gamma_1), \tau_{a_2}(\gamma_2), &\tau_{a_3}(\gamma_3)\rangle \rangle_{0,3}^X(\tau) \\
&=\sum_a \langle \langle \tau_{a_1}(\gamma_1), \phi_a \rangle \rangle_{0,2}^X \langle \langle \phi^a, \tau_{a_2}(\gamma_2), \tau_{a_3}(\gamma_3)\rangle \rangle_{0,3}^X,
\end{align*}
where, as above, $\{\phi_a\}$ is a basis for $H^*(X)$, and $\{\phi^a\}$ denotes the dual basis under the Poincar\'e pairing.  There are also topological recursion relations in higher genus (see \cite{EguchiXiong}, \cite{Getzler1}, and \cite{Getzler2}), but we omit them here as they are more complicated and not necessary for our purposes.

In terms of a differential equation, the genus-zero topological recursion relations are given by
\begin{equation}
\label{TRR}
\frac{\d^3 \mathcal{F}^0_X}{\d t_{j_1}^{i_1} \d t_{j_2}^{i_2} \d t_{j_3}^{i_3}} = \sum_{\mu, \nu} \frac{\d^2 \mathcal{F}^0_X}{\d t^{i_1}_{j_1} \d t^{\mu}_0} g^{\mu \nu} \frac{\d \mathcal{F}^0_X}{\d t^{i_2}_{j_2} \d t^{i_3}_{j_3} \d t^{\nu}_0}.
\end{equation}
Here, we use $g_{\mu\nu}$ to denote the matrix for the Poincar\'e pairing on $H^*(X)$ in the basis $\{\phi_{\alpha}\}$ and $g^{\mu \nu}$ to denote the inverse matrix.

\subsection{Divisor Equation}

The divisor equation describes invariants in which one insertion lies in $H^2(X)$ (with no $\psi$ classes) in terms of invariants with fewer insertions.  For $\rho \in H^2(X)$, it states:
\[\langle \tau_{a_1}(\gamma_1), \cdots ,\tau_{a_n}(\gamma_n) \; \rho \rangle^X_{g,n+1,d} = \langle \rho, d \rangle \langle \tau_{a_1}(\gamma_1), \cdots, \tau_{a_n}(\gamma_n)\rangle^X_{g,n,d}\hspace{3cm}\]
\[\hspace{2cm} + \sum_{i=1}^n \langle \tau_{a_1}(\gamma_1), \cdots, \tau_{a_{i-1}}(\gamma_{i-1}), \tau_{a_i -1}(\gamma_i \rho), \tau_{a_{i+1}}(\gamma_{i+1}), \cdots, \tau_{a_n}(\gamma_n)\rangle^X_{g,n,d},\]
or equivalently,
\begin{equation}
\label{divisoreqn}
\langle \mathbf{a}_1(\psi), \ldots, \mathbf{a}_{n-1}(\psi), \rho \rangle_{g,n,d}^X = \langle \rho, d \rangle \langle \mathbf{a}_1(\psi), \ldots, \mathbf{a}_{n-1}(\psi)\rangle_{g,n-1,d}^X
\end{equation}
\begin{equation*}
\hspace{2cm} + \sum_{i=1}^{n-1} \left\langle \mathbf{a}_1(\psi), \ldots, \left[\frac{\rho \mathbf{a}_i(\psi)}{\psi}\right]_+, \ldots, \mathbf{a}_{n-1}(\psi)\right\rangle_{g,n-1,d}^X.
\end{equation*}
This equation, too, can be expressed as a differential equation on the generating function.  The resulting equation is not needed for the time being, but we will return to it in Chapter 4.

\section{Axiomatization}
\label{axiomatic}

Axiomatic Gromov-Witten theory attempts to formalize the structures which arise in a genus-zero Gromov-Witten theory.  One advantage of such a program is that any properties proved in the framework of axiomatic Gromov-Witten theory will necessarily hold for any of the variants of Gromov-Witten theory that share the same basic properties, such as the orbifold theory or FJRW theory.  See \cite{Lee}, for a more detailed exposition of the subject of axiomatization.

Let $H$ be an arbitrary $\Q$-vector space equipped with a distinguished element $1$ and a nondegenerate inner product $(\; , \;)$.  Let
\[\mathbb{H} = H((z^{-1})),\]
which is a symplectic vector space with symplectic form $\Omega$ defined by
\[\Omega(f,g) = \text{Res}_{z=0}\bigg((f(-z), g(z))\bigg).\]
An arbitrary element of $\mathbb{H}$ can be expressed as
\begin{equation}
\label{darboux}
\sum_{k \geq 0} p_{k,\alpha}\phi^{\alpha}(-z)^{-1-k} + \sum_{\ell \geq 0} q_{\ell}^{\beta} \phi_{\beta} z^{\ell},
\end{equation}
in which $\{\phi_1, \ldots, \phi_d\}$ is a basis for $H$ with $\phi_1 = 1$.  Define a subspace $\mathbb{H}_+ = H[[z]]$ of $\mathbb{H}$, which has coordinates $q^i_j$.  Elements of $\mathbb{H}_+$ are identified with $\mathbf{t}(z) \in H[[z]]$ via the dilaton shift
\[t^i_j = q^i_j + \delta^{i1}\delta_{j1}.\]

\begin{df}
A {\bf genus-zero axiomatic theory} is a pair $(\mathbb{H}, G_0)$, where $\mathbb{H}$ is as above and $G_0= G_0(\mathbf{t})$ is a formal function of $\mathbf{t}(z) \in H[[z]]$ satisfying the differential equations (\ref{SE}), (\ref{DE}), and (\ref{TRR}).
\end{df}

In the case where $H = H^*(X ; \Lambda)$ equipped with the Poincar\'e pairing and $G_0 = \mathcal{F}^0_X$, one finds that the genus-zero Gromov-Witten theory of $X$ is an axiomatic theory.

Note that we require an axiomatic theory to satisfy neither the divisor equation (for example, this fails in orbifold Gromov-Witten theory), nor the WDVV equations, both of which are extremely useful in ordinary Gromov-Witten theory.  While these properties are computationally desirable for a theory, they are not necessary for the basic axiomatic framework.

\chapter{Quantization in the infinite-dimensional case} \label{infinite}

Axiomatization reduces the relevant structures of Gromov-Witten theory to a special type of function on an infinite-dimensional symplectic vector space
\begin{equation}
\label{H}
\mathbb{H} = H((z^{-1})).
\end{equation}
As we will see in Chapter \ref{GW}, the actions of quantized operators on the quantization of $\mathbb{H}$ have striking geometric interpretations in the case where $H = H^*(X; \Lambda)$ for a projective variety $X$.  Because our ultimate goal is the application of quantization to the symplectic vector space (\ref{H}), we will assume throughout this chapter that the infinite-dimensional symplectic vector space under consideration has that form.

\section{The symplectic vector space}

Let $H$ be a vector space of finite dimension equipped with a nondegenerate inner product $( \; , \; )$ and let $\mathbb{H}$ be given by (\ref{H}).  As explained in Section \ref{axiomatic}, $\mathbb{H}$ is a symplectic vector space under the symplectic form $\Omega$ defined by
\[\Omega(f,g) = \Res_{z=0} \bigg( (f(-z), g(z)) \bigg).\]
The subspaces
\[\mathbb{H}_+ = H[z]\]
and
\[\mathbb{H}_- = z^{-1}H[[z^{-1}]]\]
are Lagrangian. A choice of basis $\{\phi_1, \ldots, \phi_d\}$ for $H$ yields a symplectic basis for $\mathbb{H}$, in which the expression for an arbitray element in Darboux coordinates is
\[\sum_{k \geq 0} p_{k,\alpha} \phi^{\alpha} (-z)^{-1 - k} + \sum_{\ell \geq 0} q_{\ell}^{\beta}\phi_{\beta} z^{\ell}.\]
Here, as before, $\{\phi^{\alpha}\}$ denotes the dual basis to $\{\phi_{\alpha}\}$ under the pairing $(\; , \;)$. We can identify $\mathbb{H}$ as a symplectic vector space with the cotangent bundle $T^*\mathbb{H}_+$.

Suppose that $T: \mathbb{H} \rightarrow \mathbb{H}$ is an endomorphism of the form
\begin{equation}
\label{M}
T = \sum_{m} B_m z^m,
\end{equation}
where $B_m : H \rightarrow H$ are linear transformations.  Let $T^*$ denote the endomorphism given by taking the adjoint $B_m^*$ of each transformation $B_m$ with respect to the pairing.  Then the symplectic adjoint of $T$ is
\[T^{\dagger}(z) = T^*(-z) = \sum_m B_m^*(-z)^m.\]

As usual, a symplectomorphism is an endomorphism $T$ of $\mathbb{H}$ that satisfies $\Omega(Tf, Tg) = \Omega(f,g)$ for any $f,g\in \mathbb{H}$. When $T$ has the form $\eqref{M}$, one can check that this is equivalent to the condition 
\[
T^*(-z)T(z) = \Id.
\]

We will specifically be considering symplectomorphisms of the form
\[T = \exp(A)\]
in which $A$ also has the form $\eqref{M}$.  In this case, we will require that $A$ is an infinitesimal symplectic transformation, or in other words that $\Omega(Af, g) + \Omega(f, Ag) = 0$ for any $f,g \in \mathbb{H}$.  Using the expression for $A$ as a power series as in $\eqref{M}$, one finds that this condition is equivalent to $A^*(-z) + A(z) = 0$, which in turn implies that
\begin{equation}
\label{infsympl}
A^*_m=(-1)^{m+1}A_m.
\end{equation}

There is another important restriction we must make: the transformation $A$ (and hence $T$) will be assumed to contain {\it either} only nonnegative powers of $z$ {\it or} only nonpositive powers of $z$.  Because of the ordering of the coordinates $p_{k,\alpha}$ and $q^{\beta}_{\ell}$, a transformation
\[R = \sum_{m \geq 0} R_m z^m\]
with only nonnegative powers of $z$ will be referred to as lower-triangular, and a transformation
\[S = \sum_{m \leq 0} S_m z^m\]
with only nonpositive powers of $z$ will be referred to as upper-triangular.\footnote{This naming convention disagrees with what is typically used in Gromov-Witten theory, but we choose it for consistency with the finite-dimensional setting discussed in Chapter \ref{finite}.}  The reason for this is that if $A$ had both positive and negative powers of $z$, then exponentiating $A$ would yield a series in which a single power of $z$ could have a nonzero contribution from infinitely many terms, and the result would not obviously be convergent.  There are still convergence issues to be addressed when one composes lower-triangular with upper-triangular operators, but we defer discussion of this to Section \ref{convergence}.
 
\section{Quantization via quadratic Hamiltonians}

Although in the finite-dimensional case we needed to start by choosing a symplectic basis, by expressing our symplectic vector space as $\mathbb{H} = H((z^{-1}))$ we have already implicity chosen a polarization $\mathbb{H} = \mathbb{H}_+ \oplus \mathbb{H}_-$.  Thus, the quantization of the symplectic vector space $\mathbb{H}$ should be thought of as the Hilbert space $\mathscr{H}$ of square-integrable functions on $\mathbb{H}_+$ with values in $\C[[\hbar, \hbar^{-1}]]$.  As in the finite-dimensional case, we will sometimes in practice allow $\mathscr{H}$ to contain formal functions that are not square-integrable, and the space of all such formal functions will be referred to as the Fock space.

\subsection{The quantization procedure}
Observables, which classically are functions on $\mathbb{H}$, are quantized the same way as in the finite-dimensional case, by setting
\[
\hat{q}^{\alpha}_k = q^{\alpha}_k
\]
\[
\hat{p}_{k,\alpha} = \hbar \frac{\d}{\d q^{\alpha}_k},
\]
and quantizing an arbitrary analytic function by expanding it in a Taylor series and ordering the variables within each monomial in the form $q^{\alpha_1}_{k_1} \cdots q^{\alpha_n}_{k_n} p_{\ell_1,\beta_1} \cdots p_{\ell_m,\beta_{m}}$.

In order to quantize a symplectomorphism $T = \exp(A)$ of the form discussed in the previous section, we will mimic the procedure discussed in Section \ref{Ham}.  Namely, define a function $h_A$ on $\mathbb{H}$ by
\[h_A(f) = \frac{1}{2}\Omega(Af, f).\]
Since $h_A$ is a classical observable, it can be quantized by the above formula.  We define the quantization of $A$ via
\[
\hat{A} = \frac{1}{\hbar}\widehat{h_A},
\]
and $U_T$ is defined by
\[
U_T = \exp(\hat{A}).
\]
The next section is devoted to making this formula more explicit.

\subsection{Basic examples}
Before turning to the general upper-triangular and lower-triangular cases, we will begin with two simple but crucial examples.  These computations follow Example 1.3.3.1 of \cite{Coates}.

\begin{ex}
Suppose that the infinitesimal symplectic transformation $A$ is of the form
\[
A = A_m z^{m},
\]
where $A_m: H \rightarrow H$ is a linear transformation and $m> 0$.

To compute $\hat{A}$, one must first compute the Hamiltonian $h_A$.  Let
\[
f(z) = \sum_{k \geq 0} p_{k,\alpha} \phi^{\alpha} (-z)^{-1-k} + \sum_{\ell \geq 0} q_{\ell}^{\beta} \phi_{\beta} z^{\ell} \in \mathbb{H}.
\]
Then
\begin{align*}
h_A(f) &= \frac{1}{2} \Omega(Af, f)\\
&=\frac{1}{2}\Res_{z=0}\left((-z)^{m}\sum_{k_1 \geq 0} p_{k_1, \alpha}(A_m \phi^{\alpha})\ z^{-1-k_1} + (-z)^m\sum_{\ell_1 \geq 0} q_{\ell_1}^{\beta}(A_m \phi_{\beta})(-z)^{\ell_1},\right.\\
&\hspace{5cm} \left.\sum_{k_2 \geq 0} p_{k_2, \alpha} \phi^{\alpha} (-z)^{-1-k_2} + \sum_{\ell_2 \geq 0} q_{\ell_2}^{\beta} \phi_{\beta}z^{\ell_2}\right)
\end{align*}
Since only the $z^{-1}$ terms contribute to the residue, the right-hand side is equal to
\begin{align*}
\frac{1}{2}\sum_{k \geq 0}^{m-1} (-1)^k p_{k,\alpha} p_{m-k-1, \beta} (A_m \phi^{\alpha}, \phi^{\beta}) +& \frac{1}{2}\sum_{k \geq 0}(-1)^m p_{m+k, \alpha} q_{k}^{\beta}(A_m \phi^{\alpha}, \phi_{\beta})\hspace{1cm}\\
	&-\frac{1}{2}\sum_{k \geq 0} q_{k}^{\beta}p_{m+k, \alpha}(A_m \phi_{\beta}, \phi^{\alpha}).
\end{align*}
By (\ref{infsympl}), we have 
\[
(\phi^{\alpha}, A_m\phi_{\beta})=(-1)^{m+1}(A_m \phi^{\alpha}, \phi_{\beta}).
\]
Thus, denoting $(A_m)^{\alpha}_{\beta} = (A_m \phi^{\alpha}, \phi_{\beta})$ and $(A_m)^{\alpha \beta} = (A_m \phi^{\alpha}, \phi^{\beta})$, we can write
\[
h_A(f) = \frac{1}{2}\sum_{k=0}^{m-1} (-1)^k p_{k,\alpha} p_{m-k-1,\beta} (A_m)^{\alpha\beta}+(-1)^m\sum_{k \geq 0} p_{m+k, \alpha} q_{k}^{\beta}(A_m)^{\alpha}_{\beta}.
\]
This implies that
\[
\hat{A} = \frac{\hbar}{2} \sum_{k=0}^{m-1} (-1)^k (A_m)^{\alpha\beta} \frac{\d}{\d q^{\alpha}_k}\frac{\d}{\d q^{\beta}_{m-k-1}} + (-1)^m \sum_{k \geq 0} (A_m)^{\alpha}_{\beta} q^{\beta}_{k} \frac{\d}{\d q_{m+k}^{\alpha}}.
\]
\end{ex}

\begin{ex}
\label{Bzm}
Similarly to the previous example, let
\[
A = A_m z^{m},
\]
this time with $m < 0$.  Let  $(A_{m})_{ \alpha\beta} = (A_m\phi_{\alpha}, \phi_{\beta})$.  An analogous computation shows that
\[
\hat{A} = \frac{1}{2\hbar} \sum_{k=0}^{-m-1}(-1)^{k+1} (A_m)_{\alpha\beta} q_{-m-k-1}^{\alpha} q_{k}^{\beta}
+(-1)^m\sum_{k \geq -m}(A_m)^{\alpha}_{\beta}q^{\beta}_{k} \frac{\d}{\d q_{k+m}^{\alpha}}.
\]
\end{ex}

\begin{ex}
\label{lower-quad}
More generally, let 
\[
A=\sum_{m<0}A_m z^{m}
\] 
be an infinitesimal symplectic endomorphism.  Then the quadratic Hamiltonian associated to $A$ is 
\[
h_A(f)=\frac{1}{2}\sum_{k,m} (-1)^{m+1} (A_{-k-m-1})_{\alpha\beta} q_{k}^{\alpha}q_{m}^{\beta}
+\sum_{k,m}(-1)^{m} (A_m)^{\alpha}_{\beta}  p_{k+m,\alpha} q_{k}^{\beta}.
\]
Thus, after quantization, we obtain 
\begin{equation}
\hat{A}=\frac{1}{2\hbar}\sum_{k,m}(-1)^{m+1} (A_{-k-m-1})_{\alpha\beta} q_{k}^{\alpha}q_{m}^{\beta} 
+\sum_{k,m}(-1)^{m} (A_m)^{\alpha}_{\beta} q_{k}^{\beta}\frac{\d}{\d q_{k+m}^{\alpha}}.
\end{equation}
\end{ex}

It is worth noting that some of the fairly complicated expressions appearing in these formulas can be written more succinctly, as discussed at the end of Example 1.3.3.1 of \cite{Coates}.  Indeed, the expression
\begin{equation}
\label{dA}
\d_A := \sum_{k} (A_m)^{\alpha}_{\beta} q_k^{\beta} \frac{\d}{\d q^{\alpha}_{k+m}}
\end{equation}
that appears in $\hat{A}$ for both $m > 0$ and $m < 0$ acts on $\mathbf{q}(z) \in \mathbb{H}_+$ by
\[
\left(\sum_{k} (A_m)^{\alpha}_{\beta} q^{\beta}_k \frac{\d}{\d q^{\alpha}_{k+m}} \right)\left(\sum_{\ell} q_{\ell}^{\gamma} \phi_{\gamma} z^{\ell}\right) = \left[\sum_{k} (A_m)^{\alpha}_{\beta} q_k^{\beta} \phi_{\alpha} z^{k+m}\right]_+,
\]
where $[ \cdot ]_+$ denotes the power series truncation with only nonnegative powers of $z$.  In other words, if $\mathbf{q} = \sum_{\ell} q^i_{\ell}\phi_i z^{\ell}$, then
\begin{equation}
\label{qplus}
\d_A \mathbf{q} = [A\mathbf{q}]_+.
\end{equation}

By the same token, consider the expression
\[
\sum_{k \geq 0} (-1)^k (A_m)^{\alpha\beta} \frac{\d}{\d q_k^{\alpha}} \frac{\d}{\d q^{\beta}_{m-k-1}}
\]
appearing in $\hat{A}$ for $m>0$.  The quadratic differential operator $\frac{\d}{\d q_k^{\alpha}} \frac{\d}{\d q^{\beta}_{m-k-1}}$ can be thought of as a bivector field on $\mathbb{H}_+$, and since $\mathbb{H}_+$ is a vector space, a bivector field can be identified with a tensor product of two maps $\mathbb{H}_+ \rightarrow \mathbb{H}_+$.  Specifically, $\frac{\d}{\d q_k^{\alpha}} \frac{\d}{\d q^{\beta}_{m-k-1}}$ is the bivector field corresponding to the constant map
\[
\phi_{\alpha} z_+^k \otimes \phi_{\beta} z_-^{m-k-1} \in H[z_+] \otimes H[z_-] \cong \mathbb{H}_+ \otimes \mathbb{H}_+.
\]
Using the identity
\[
\sum_{k=0}^{m-1}(-1)^k z_+^k z_-^{m-1-k} = \frac{z_+^m +(-1)^{m-1} z_-^m}{z_+ + z_-},
\]
then, it follows that
\begin{equation}
\label{plusminus}
\sum_{k \geq 0} (-1)^k (A_m)^{\alpha\beta} \frac{\d}{\d q_k^{\alpha}} \frac{\d}{\d q^{\beta}_{m-k-1}} = \left[\frac{A(z_+) + A^*(z_-)}{z_+ + z_-}\right]_+,
\end{equation}
\noindent where we use the pairing to identify $(A_m)^{\alpha\beta}\phi_{\alpha}\otimes \phi_{\beta} \in \mathbb{H}_+ \otimes \mathbb{H}_+$ with $A_m \in \End(H)$.  Here, the power series truncation is included to ensure that (\ref{plusminus}) is trivially valid when $m$ is negative.

The modified expressions (\ref{qplus}) and (\ref{plusminus}) can be useful in recognizing the appearance of a quantized operator in computations.  For example, (\ref{qplus}) will come up in Proposition \ref{LT} below, and both (\ref{qplus}) and (\ref{plusminus}) arise in the context of Gromov-Witten theory in Theorem 1.6.4 of \cite{Coates}.

\subsection{Formulas for the lower-triangular and upper-triangular cases}

Extending the above two examples carefully, one obtains formulas for the quantization of any upper-triangular or lower-triangular symplectomorphism.  The following results are quoted from \cite{GiventalQuadraticHamiltonians}; see also \cite{LeePandh} for an exposition.

\begin{prop}
\label{LT}
Let $S$ be an upper-triangular symplectomorphism of $\mathbb{H}$ of the form $S = \exp(A)$, and write
\[
S(z) = I + S_1/z + S_2/z^2 + \cdots.
\]
Define a quadratic form $W_S$ on $\mathbb{H}_+$ by the equation
\[
W_S(\mathbf{q}) = \sum_{k,\ell\geq 0}(W_{k\ell}\mathbf{q}_{k}, \mathbf{q}_{\ell}),
\]
where 
$\mathbf{q}_k=q_k^{\alpha}\phi_{\alpha}$
and $W_{k\ell}$ is defined by
\[
\sum_{k,\ell \geq 0} \frac{W_{k\ell}}{z^k w^{\ell}} = \frac{S^*(w)S(z) - I}{w^{-1}+z^{-1}}.
\]
Then the quantization of $S^{-1}$ acts on the Fock space by
\[
(U_{S^{-1}}\Psi)(\mathbf{q}) = \exp\left(\frac{W_S(\mathbf{q})}{2\hbar}\right)\Psi([S\mathbf{q}]_+)
\]
for any function $\Psi$ of $\mathbf{q} \in \mathbb{H}_+$.  Here, as above, $[S\mathbf{q}]_+$ denotes the truncation of $S(z)\mathbf{q}$ to a power series in $z$.
\end{prop}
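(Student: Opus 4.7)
The plan is to establish the formula via a one-parameter family and a linear ODE. Write $S = \exp(A)$ for $A = \sum_{m<0}A_m z^m$, an upper-triangular infinitesimal symplectic transformation, and set $S_t := \exp(tA)$, so that $U_{S_t^{-1}} = \exp(-t\hat{A})$ by the definition of quantization via quadratic Hamiltonians. Define the candidate right-hand side
\[
\Phi_t\Psi(\mathbf{q}) := \exp\!\bigl(W_{S_t}(\mathbf{q})/(2\hbar)\bigr)\, \Psi\bigl([S_t \mathbf{q}]_+\bigr).
\]
At $t=0$ both sides equal $\Psi(\mathbf{q})$, since $S_0=I$ gives $W_{S_0}=0$ and $[\mathbf{q}]_+=\mathbf{q}$. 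It therefore suffices to show $\tfrac{d}{dt}\Phi_t\Psi = -\hat{A}\,\Phi_t\Psi$ in the Fock space of formal functions; uniqueness for linear first-order ODEs identifies $\Phi_t\Psi$ with $U_{S_t^{-1}}\Psi$, and setting $t=1$ yields the proposition.

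Using Example~\ref{lower-quad}, decompose $\hat{A} = \tfrac{1}{2\hbar}Q_A(\mathbf{q}) + D_A$, where $Q_A$ is an explicit quadratic form in $\mathbf{q}$ acting by multiplication and $D_A$ is a first-order derivation. The crucial geometric observation, an analog of equation~(\ref{qplus}) adapted to the signs in Example~\ref{lower-quad}, is that $D_A$ acts on the vector-valued function $\mathbf{q}$ by $\mathbf{q} \mapsto [\tilde{A}\mathbf{q}]_+$, where $\tilde{A}$ is an upper-triangular operator related to $A$ via the infinitesimal symplectic relation $A^*(-z) = -A(z)$. Because $A$ and $\tilde A$ each preserve $\mathbb{H}_- = z^{-1}H[[z^{-1}]]$, the positive-part truncation commutes with multiplication by either operator, so the chain rule yields $D_A\bigl(\Psi([S_t\mathbf{q}]_+)\bigr) = (\nabla\Psi)([S_t\mathbf{q}]_+)\cdot [\tilde{A} S_t \mathbf{q}]_+$.

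Applying $-\hat{A}$ to $\Phi_t\Psi$ by the Leibniz rule and matching the result against $\tfrac{d}{dt}\Phi_t\Psi$—whose time derivative features $\tfrac{d}{dt}[S_t\mathbf{q}]_+=[A S_t\mathbf{q}]_+$ by $\dot S_t = AS_t$—the operator ODE collapses to a purely scalar identity among functions of $\mathbf{q}$. The plan is to verify this identity by differentiating the defining generating series
\[
\sum_{k,\ell\geq 0} \frac{W_{k\ell}(t)}{z^k w^\ell} \;=\; \frac{S_t^*(w)S_t(z) - I}{w^{-1} + z^{-1}}
\]
in $t$, using $\dot S_t = A S_t$ together with the symplectic adjoint relation, and comparing the extracted $q_k^\alpha q_m^\beta$-coefficients with the explicit formulas for $Q_A$ and $D_A(W_{S_t})$ obtained from Examples~\ref{Bzm}--\ref{lower-quad}.

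The main obstacle is precisely this scalar generating-function identity: one must carefully reconcile the discrepancy between the substitution direction $[A S_t\mathbf{q}]_+$ appearing in $\tfrac{d}{dt}\Phi_t\Psi$ and the direction $[\tilde{A} S_t\mathbf{q}]_+$ arising from $D_A$, and show that the resulting mismatch is absorbed by the combination of $\tfrac{d}{dt}W_{S_t}$ and $Q_A$ on the exponential prefactor. The signs $(-1)^m$ appearing in Example~\ref{lower-quad}, together with the specific form $(S^*(w)S(z)-I)/(w^{-1}+z^{-1})$ for the generating series of $W_S$, are engineered precisely so that this cancellation occurs, and the heart of the proof is tracking this residue-style calculation to completion.
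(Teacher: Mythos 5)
Your proposal is correct and follows essentially the same route as the paper's proof: introduce the one-parameter family $S_t=\exp(tA)$, compare the candidate $\Phi_t\Psi = \exp(W_{S_t}/2\hbar)\,\Psi([S_t\mathbf{q}]_+)$ with $e^{-t\hat A}\Psi$ by showing both solve the same linear first-order ODE in $t$, and reduce the verification to differentiating the generating series for $W_{k\ell}(t)$, exactly as the paper does (the paper merely passes to logarithms first). One small simplification you can make: the derivation part of $-\hat{A}$ from Example~\ref{lower-quad}, once the signs $(-1)^{\ell-1}(A_\ell)^{\alpha}_{\beta}$ are unwound using $A_\ell^*=(-1)^{\ell+1}A_\ell$, acts on $\mathbf{q}$ precisely as $\mathbf{q}\mapsto[A\mathbf{q}]_+$ — no auxiliary $\tilde A$ is needed — and since $A$ commutes with $S_t=\exp(tA)$ the two ``substitution directions'' you worry about agree on the nose, leaving the $W_t$ identity as the only genuine computation.
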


\begin{proof}
Let $A=\displaystyle\sum_{m<0} A_m z^m$.  Introduce a real parameter $t$ and denote
\[
G(t,\mathbf{q})=e^{-t\hat{A}}\Psi(\mathbf{q}).
\]
Define a $t$-dependent analogue of $W_S$ via
\begin{equation}
\label{p-sol}
W_{t}(\mathbf{q}):=\sum_{k,\ell\geq0}(W_{k\ell}(t)\mathbf{q}_{k},\mathbf{q}_{\ell}),
\end{equation}
where
\[
\sum_{k,\ell\geq0}\frac{W_{k\ell}(t)}{z^{k}w^{\ell}}=\frac{e^{t\cdot A^*(w)} e^{t\cdot A(z)}-1}{z^{-1}+w^{-1}}.
\]
Note that $W_{k\ell}(t) = W^*_{\ell k}(t)$.

We will prove that
\begin{equation}
\label{star}
G(t, \mathbf{q}) = \exp\left(\frac{W_t(\mathbf{q})}{2\hbar}\right) \psi\left(\left[e^{tA}\mathbf{q}\right]_+\right)
\end{equation}
for all $t$.  The claim follows by setting $t=1$.

To prove (\ref{star}), let
\[g(t, \mathbf{q}) = \log(G(t, \mathbf{q}))\]
and write $\Psi = \exp(f)$.  Then, taking logarithms, it suffices to show
\begin{equation}
\label{starstar}
g(t, \mathbf{q}) = \frac{W_t(\mathbf{q})}{2\hbar} + f\left(\left[e^{tA}\mathbf{q}\right]_+\right).
\end{equation}

Notice that
\begin{align*}
\frac{d}{dt}G(t,\mathbf{q})&=-\hat{A}\ G(t,\mathbf{q})\\
&=\frac{1}{2\hbar}\sum_{k,\ell}(-1)^{\ell}(A_{-k-\ell-1})_{\alpha\beta}q_{k}^{\alpha}q_{\ell}^{\beta} G(t,\mathbf{q})+\sum_{k,\ell}(-1)^{\ell-1}(A_\ell)^{\alpha}_{\beta}q_{k}^{\beta}\frac{\d}{\d q_{k+\ell}^{\alpha}} G(t,\mathbf{q}),
\end{align*}
using Example \eqref{lower-quad}.  This implies that $g(t, \mathbf{q})$ satisfies the differential equation
\begin{equation}\label{quad-pde}
\frac{d}{dt}g(t,\mathbf{q})=
\frac{1}{2\hbar}\sum_{k,\ell}(-1)^{\ell}A_{-k-\ell-1,\alpha\beta}q_{k}^{\alpha}q_{\ell}^{\beta}
+
\sum_{k,\ell}(-1)^{\ell-1}(A_\ell)^{\alpha}_{\beta} q_{k+\ell}^{\beta}
\frac{\d g}{\d q_{k}^{\alpha}}
\end{equation}
We will prove that the right-hand side of (\ref{starstar}) satisfies the same differential equation.

The definition of $W_{k\ell}(t)$ implies that
\[
\frac{d}{d t}W_{k \ell}(t)=
\sum_{\ell'=0}^{\ell}A^*_{\ell'-\ell}W_{k \ell'}(t)
+\sum_{k'=0}^{k}W_{k'\ell}(t)A_{k'-k}
+(-1)^{k} A_{-k-\ell-1}.
\]
Therefore,
\begin{align*}
\frac{1}{2\hbar}\frac{d}{dt}W_t(\mathbf{q}) &= \frac{1}{2\hbar}\sum_{k,\ell}\Bigg(\sum_{\ell'}\left(A_{\ell'-\ell}^*W_{k\ell'}(t)\mathbf{q}_{k},\mathbf{q}_{\ell}\right)+\sum_{k'}\left(W_{k'\ell}(t)A_{k'-k}\mathbf{q}_{k},\mathbf{q}_{\ell}\right)\\
    &\hspace{3cm} +(-1)^{k}(A_{-k-\ell-1}\mathbf{q}_{k},\mathbf{q}_{\ell})\Bigg)\\
   &= \frac{1}{2\hbar}\sum_{k,\ell}\left(2\sum_{\ell'}\left(W_{k\ell'}(t)\mathbf{q}_{k},A_{\ell'-\ell}\mathbf{q}_{\ell}\right)+(-1)^{k}\left(A_{-k-\ell-1}\mathbf{q}_{\ell},\mathbf{q}_{k}\right)\right)\\
   &= \frac{1}{2\hbar}\sum_{k,\ell}\bigg(2\sum_{\ell'}(-1)^{\ell'-\ell-1}(A_{\ell'-\ell})^{\alpha}_\beta q_{\ell}^\beta\left(W_{k\ell'}(t)\mathbf{q}_{k},\phi_\alpha\right)\\
   &\hspace{3cm}(-1)^{k}\left(A_{-k-\ell-1}\mathbf{q}_{\ell},\mathbf{q}_{k}\right)\bigg)\\
     &=\frac{1}{2\hbar}\sum_{k, \ell} (-1)^{\ell} (A_{-k-\ell-1})_{\alpha\beta}q^{\alpha}_{k} q^{\beta}_{\ell}\\
     &\hspace{3cm} +\frac{1}{\hbar} \sum_{k,\ell,\ell'} (-1)^{\ell'-\ell-1} (A_{\ell'-\ell})^{\alpha}_{\beta} q^{\beta}_{\ell} \left(W_{k\ell'}(t)\mathbf{q}_k, \frac{\d}{\d q^{\alpha}_{\ell'}} \mathbf{q}_{\ell'}\right)\\
   &=\frac{1}{2\hbar}\sum_{k, \ell} (-1)^{\ell} (A_{-k-\ell-1})_{\alpha\beta}q^{\alpha}_{k} q^{\beta}_{\ell}  + \sum_{k,\ell}(-1)^{\ell-1} (A_{\ell})^{\alpha}_{\beta} q^{\beta}_k \frac{\d}{\d^{\alpha}_{k+\ell}}\left(\frac{W_t(\mathbf{q})}{2\hbar}\right).
\end{align*}
Furthermore, using Equation (\ref{qplus}), it can be shown that
\[\frac{df\left(\left[e^{tA}\mathbf{q}\right]_+\right)}{dt} = \sum_{k,\ell} (-1)^{\ell-1} (A_{\ell})^{\alpha}_{\beta} q^{\beta}_k \frac{\d}{\d q^{\alpha}_{k+\ell}} f\left(\left[e^{tA}\mathbf{q}\right]_+\right).\]

Thus, both sides of (\ref{starstar}) satisfy the same differential equation.  Since they agree when $t=0$, and each monomial in $\mathbf{q}$ and $\hbar$ depends polynomially on $t$, it follows that the two sides of (\ref{starstar}) are equal.

\end{proof}

The lower-triangular case has an analogous proposition, but we omit the proof in this case.

\begin{prop}
\label{UT}
Let $R$ be a lower-triangular symplectomorphism of $\mathbb{H}$ of the form $R = \exp(B)$, and denote
\[R(z) = I + R_1z + R_2z^2 + \cdots.\]
Define a quadratic form $V_R$ on $\mathbb{H}_-$ by the equation
\[V_R(p_0(-z)^{-1} + p_1(-z)^{-2} + p_2(-z)^{-3} + \cdots ) = \sum_{k, \ell \geq 0} (p_k, V_{k\ell}p_{\ell}),\]
where $V_{k \ell}$ is defined by
\[\sum_{k, \ell \geq 0} (-1)^{k + \ell} V_{k \ell} w^k z^{\ell} = \frac{R^*(w) R(z) - I}{z + w}.\]
Then the quantization of $R$ acts on the Fock space by
\[(U_R \Psi)(\mathbf{q}) = \left[\exp\left(\frac{\hbar V_R(\d_{\mathbf{q}})}{2}\right)\Psi\right](R^{-1}\mathbf{q}),\]
where $V_R(\d_{\mathbf{q}})$ is the differential operator obtained from $V_R(\mathbf{p})$ by replacing $p_k$ by $\frac{\d}{\d q_k}$.
\end{prop}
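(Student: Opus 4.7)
The approach mirrors that of Proposition \ref{LT}. Introduce the one-parameter family of lower-triangular symplectomorphisms $R_t = \exp(tB)$, together with the associated quadratic form $V_{R_t}$ defined exactly as in the statement but with $R$ replaced by $R_t$, so that
\[\sum_{k,\ell \geq 0}(-1)^{k+\ell}V_{k\ell}(t)\,w^k z^\ell = \frac{R_t^*(w)R_t(z) - I}{z+w}.\]
Set
\[G(t,\mathbf{q}) := e^{t\hat B}\Psi(\mathbf{q}) = (U_{R_t}\Psi)(\mathbf{q}), \qquad H(t,\mathbf{q}) := \left[\exp\!\bigl(\tfrac{\hbar}{2}V_{R_t}(\d_\mathbf{q})\bigr)\Psi\right]\!(R_t^{-1}\mathbf{q}).\]
Since $R_0 = \Id$ and $V_{k\ell}(0) = 0$, both $G$ and $H$ reduce to $\Psi(\mathbf{q})$ at $t = 0$. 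The plan is to show that $G$ and $H$ satisfy the same first-order ODE in $t$; because each monomial in $\mathbf{q}$ and $\hbar$ depends polynomially on $t$, equality will then follow for all $t$, and the proposition is the specialization at $t = 1$.

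On the left, $\tfrac{dG}{dt} = \hat B\, G$ by definition. The operator $\hat B$ is computed just as in Example \ref{lower-quad}, but with positive powers of $z$: generalizing the first of the explicit examples in Section 3.2 to $B = \sum_{m > 0}B_m z^m$ satisfying the infinitesimal symplectic condition gives
\[\hat B = \tfrac{\hbar}{2}\sum_{m>0}\sum_{k=0}^{m-1}(-1)^k (B_m)^{\alpha\beta}\tfrac{\d^2}{\d q^\alpha_k\,\d q^\beta_{m-k-1}} + \sum_{m>0,\,k\geq 0}(-1)^m (B_m)^{\alpha}_{\beta}\, q^\beta_k \tfrac{\d}{\d q^\alpha_{m+k}}.\]
Decompose this as $\hat B = \tfrac{\hbar}{2}Q(\d_\mathbf{q}) + D$, the constant-coefficient quadratic part plus the first-order Euler-type piece.

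For the right side, apply the chain rule to $H(t,\mathbf{q}) = F_t(R_t^{-1}\mathbf{q})$ with $F_t := \exp\!\bigl(\tfrac{\hbar}{2}V_{R_t}(\d_\mathbf{q})\bigr)\Psi$. Since $V_{R_t}(\d)$ and $\dot V_{R_t}(\d)$ are constant-coefficient and commute, $\tfrac{dF_t}{dt} = \tfrac{\hbar}{2}\dot V_{R_t}(\d)\,F_t$. Combined with $\tfrac{d}{dt}(R_t^{-1}\mathbf{q}) = -B R_t^{-1}\mathbf{q}$ (using $\dot R_t = B R_t$ and that $B$ commutes with $R_t$), this yields two contributions to $\tfrac{dH}{dt}$: a ``transport'' term which, after the chain rule is passed through the linear change of variables $R_t^{-1}$, is precisely the action of $D$ on $H$; and the operator $\tfrac{\hbar}{2}\dot V_{R_t}(\d_\mathbf{q})$ acting on $F_t(R_t^{-1}\mathbf{q})$. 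Differentiating the generating series and simplifying using the infinitesimal symplectic relation $B^*(w) = -B(-w)$ together with the telescoping identity $\tfrac{z^m-(-w)^m}{z+w} = \sum_{j=0}^{m-1}(-1)^{m-1-j}z^j w^{m-1-j}$, one obtains
\[\tfrac{d}{dt}\sum_{k,\ell}(-1)^{k+\ell}V_{k\ell}(t)w^kz^\ell \;=\; R_t^*(w)\,\bigg(\sum_{m>0}B_m\sum_{j=0}^{m-1}(-1)^{m-1-j}z^jw^{m-1-j}\bigg)R_t(z),\]
identifying the symbol of $\dot V_{R_t}$ as the symbol of $Q$ conjugated by $R_t^*$ and $R_t$.

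The main obstacle is this final identification. When $Q(\d_\mathbf{q})$ acts on $F_t(R_t^{-1}\mathbf{q})$, the chain rule produces a contraction of the symbol of $Q$ with two copies of $R_t^{-1}$, converting the action to one in the $\tilde{\mathbf{q}}$-variables; this is precisely the inverse of the $R_t^*,R_t$ sandwich appearing in $\dot V_{R_t}$, so the two cancel and leave $\tfrac{\hbar}{2}Q$ in the $\tilde{\mathbf{q}}$-variables acting on $F_t$, matching the quadratic contribution from $\tfrac{\hbar}{2}\dot V_{R_t}(\d_\mathbf{q})$ term-by-term. Once this index-juggling is carried out, $G$ and $H$ satisfy the same first-order linear ODE with common initial datum $\Psi$, so evaluation at $t = 1$ completes the proof.
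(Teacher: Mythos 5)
The paper itself states ``The lower-triangular case has an analogous proposition, but we omit the proof in this case,'' so there is no paper proof to compare directly; the relevant template is the proof of Proposition~\ref{LT}. Your proposal correctly executes that analogy: the one-parameter family $R_t=\exp(tB)$, the decomposition of $\hat B$ into a constant-coefficient quadratic part $\tfrac{\hbar}{2}Q(\d_{\mathbf q})$ and an Euler part $D$, the matching of $D$ with the transport term coming from $\tfrac{d}{dt}(R_t^{-1}\mathbf q)=-BR_t^{-1}\mathbf q$ (this works precisely because $B$ commutes with $R_t$, so conjugating the Euler vector field by $R_t$ leaves it unchanged), and the formula
\[
\tfrac{d}{dt}\sum_{k,\ell}(-1)^{k+\ell}V_{k\ell}(t)w^kz^\ell \;=\; R_t^*(w)\,\frac{B(z)-B(-w)}{z+w}\,R_t(z)
\]
are all right. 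The appeal to polynomial $t$-dependence plus agreement at $t=0$ to conclude equality is exactly the closing argument in the paper's proof of Proposition~\ref{LT}; the only stylistic difference is that the paper works with $\log G$ (which makes the $W_t$-contribution additive), whereas you work with $G$ directly and invoke commutativity of $V_{R_t}(\d)$ with $\dot V_{R_t}(\d)$ --- both are legitimate.

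The one place your write-up is misleading is the final ``index-juggling'' paragraph. You say that the chain-rule conjugation of $Q$ by two copies of $R_t^{-1}$ ``is precisely the inverse of the $R_t^*,R_t$ sandwich appearing in $\dot V_{R_t}$, so the two cancel and leave $\tfrac{\hbar}{2}Q$ in the $\tilde{\mathbf q}$-variables.'' There is no cancellation, and the conjugated operator does \emph{not} revert to $Q$: what is actually needed --- and what is true --- is that the chain-rule-conjugated $Q$ \emph{equals} $\dot V_{R_t}$ (which differs from $Q$ for $t\ne 0$). The reason this works is the identity $R_t^*(w)=R_t^{-1}(-w)$, an immediate consequence of the infinitesimal symplectic relation $B^*(z)=-B(-z)$: the two Jacobian factors $R_t^{-1}$ produced by the chain rule are the same as the $R_t^*,R_t$ factors in the generating series for $\dot V_{k\ell}(t)$ once the sign convention $(-1)^{k+\ell}$ in the definition of $V_{k\ell}$ is accounted for. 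So replace ``cancel and leave $Q$'' with ``turn the conjugated $Q$ into $\dot V_{R_t}$, via $R_t^*(w)=R_t^{-1}(-w)$ and the built-in signs $(-1)^{k+\ell}$.'' With that correction the argument is complete and correct.
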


\section{Convergence}
\label{convergence}
In Chapter 1, we expressed the quantization of an arbitrary symplectic transformation by decomposing it into a product of upper-triangular and lower-triangular transformations, each of whose quantizations was known.  In the infinite-dimensional setting, however, such a decomposition is problematic, because composing a series containing infinitely many nonnegative powers of $z$ with one containing infinitely many nonpositive powers will typically yield a divergent series.  This is why we have only defined quantization for upper-triangular or lower-triangular operators, not products thereof.

It is possible to avoid unwanted infinities if one is vigilant about each application of a quantized operator to an element of $\mathscr{H}$.  For example, while the symplectomorphism $S \circ R$ may not be defined, it is possible that $(\hat{S} \circ \hat{R})\Psi$ makes sense for a given $\Psi \in \mathscr{H}$ if $\hat{S} (\hat{R}\Psi)$ has a convergent contribution to each power of $z$.  This verification can be quite complicated; see Chapter 9, Section 3 of \cite{LeePandh} for an example.

\section{Feynman diagrams and integral formulas revisited}

In this section, we will attempt to generalize the integral formulas and their resulting Feynman diagram expansions computed in Chapter \ref{finite} to the infinite-dimensional case.  This is only interesting in the lower-triangular case, as the Feynman amplitude of any graph with at least one edge vanishes for upper-triangular operators.

To start, we must compute the analogues of the matrices $A$, $C$, and $D$ that describe the transformation in Darboux coordinates.  Suppose that
\[
R = \sum_{m \geq 0} R_m z^m
\]
is a lower-triangular symplectomorphism.  If
\[
e^{k, \alpha} = \phi^{\alpha}(-z)^{-1-k}, \;\;\;\; e^{\ell}_{\alpha} = \phi_{\alpha}z^{\ell},
\]
then it is easily check that
\[
\widetilde{e}^{k, \alpha} = R\cdot e^{k,\alpha} = \sum_{k' \geq 0}(-1)^{k-k'} (R_{k-k'})^{\alpha}_{\gamma}e^{k', \gamma} + \sum_{\ell' \geq 0}(-1)^{-1-k} (R_{\ell'+k+1})^{\alpha\gamma}e^{\ell'}_{\gamma},
\]
\[
\widetilde{e}^{\ell}_{\beta} = R(e^{\ell}_{\beta}) = \sum_{\ell' \geq 0} (R^*_{\ell'-\ell})_{\beta}^{\gamma} e^{\ell'}_{\gamma}.
\]

Let $\widetilde{p}_{k,\alpha}$ and $\widetilde{q}^{\beta}_{\ell}$ be defined by
\[
\sum_{k \geq 0} p_{k,\alpha} e^{k,\alpha} + \sum_{\ell \geq 0} q_{\ell}^{\beta} e^{\ell}_{\beta} = \sum_{k \geq 0} \widetilde{p}_{k,\alpha} \widetilde{e}^{k,\alpha} + \sum_{\ell \geq 0} \widetilde{q}_{\ell}^{\beta} \widetilde{e}^{\ell}_{\beta}.
\]
Then the relations among these coordinates are:
\begin{align*}
p_{k,\alpha} &= \sum_{k' \geq 0} (-1)^{k'-k} (R_{k'-k})^{\gamma}_{\alpha} \widetilde{p}_{k',\gamma},\\
q_{\ell}^{\beta} &= \sum_{k' \geq 0} (-1)^{-1-k'}(R_{\ell + k' + 1})^{\gamma\beta} \widetilde{p}_{k',\gamma} + \sum_{\ell' \geq 0} (R^*_{\ell - \ell'})_{\gamma}^{\beta} \widetilde{q}^{\gamma}_{\ell'}.
\end{align*}
That is, if we define matrices $\mathscr{A}$, $\mathscr{C}$, and $\mathscr{D}$ by
\[
(\mathscr{A}^*)_{(k,\alpha)}^{(k',\gamma)} = (-1)^{k' - k} (R_{k'-k})^{\gamma}_{ \alpha},
\]
\[
\mathscr{C}_{(\ell,\beta), (k', \gamma)} = (-1)^{-1-k'} (R_{\ell + k' + 1})^{\gamma\beta},
\]
\[
\mathscr{D}^{(\ell, \beta)}_{(\ell',\gamma)} = (R^*_{\ell - \ell'})_{\gamma}^{\beta},
\]
then the coordinates are related by
\begin{align*}
\mathbf{p} &= \mathscr{A} \widetilde{\mathbf{p}}\\
\mathbf{q} &= \mathscr{C}\widetilde{\mathbf{p}} + \mathscr{D}\widetilde{\mathbf{q}}.
\end{align*}
These matrices have rows and columns indexed by \[(k, \alpha) \in \Z^{\geq 0} \times \{1, \ldots, d\},\] but the entries vanish when $k$ is sufficiently large.

As in Section \ref{integrals} \ref{finite}, the integral formula will be expressed in terms of a function $\phi: \mathbb{H} \rightarrow \R$ defined by
\[\phi(q,p') = (\mathscr{D}^{-1}q) \cdot p' - \frac{1}{2}(\mathscr{D}^{-1}\mathscr{C}p') \cdot p'.\]
It is not necessary to invert $\mathscr{D}$, as one has $\mathscr{D}^{-1} = \mathscr{A}^T$ in this case.  Thus, the above gives an explicit formula for $\phi$.
%Inverting $\mathscr{D}$ is tedious but straightforward; the points is that, by definition
%\begin{equation}
%\label{Dinverse}
%\sum_{m, \gamma} (R_{\ell - m})_{\gamma}^{\;\;\beta} (\mathscr{D}^{-1})^{(m,\gamma)}_{\;\;\;\; (k, \alpha)} = \delta^{(\ell,\beta)}_{(k, \alpha)}.
%\end{equation}
%Since $R_ 0 = I$ and $R_m = 0$ when $m < 0$, we can apply (\ref{Dinverse}) inductively on $\ell$.  The result is:
%\[(\mathscr{D}^{-1})^{(\ell, \beta)}_{\;\;\;\;(k, \alpha)} = \sum_{a_1 + \cdots + a_r = \ell - k} (-1)^{r+1}(T_{a_1} \cdots T_{a_r})_{\alpha}^{\;\;\beta}.\]
%Thus, we have an explicit, albeit complicated, formula for $\phi$.

Equipped with this, we would like to define
\begin{equation}
\label{infiniteintegral}
(U_R\psi)(q) = \lambda \int e^{\frac{1}{\hbar}(\phi(q,p') - q' \cdot p')} \psi(q') dq'  dp',
\end{equation}
where $\lambda$ is an appropriate normalization constant.  The problem with this, though, is that the domain of the variables $q = q_{\ell}^{\beta}$ and $p' = p'_{k,\alpha}$ over which we integrate is an infinite-dimensional vector space.  We have not specified a measure on this space, so it is not clear that (\ref{infiniteintegral}) makes sense.

Our strategy for making sense of (\ref{infiniteintegral}) will be to {\it define} it by its Feynman diagram expansion, as given by (\ref{Feynmanfinal}).  Modulo factors of $2\pi$, which are irrelevant because $U_R$ is defined only up to a real multiplicative constant, the answer is:
\[(U_R\psi)(q) = \frac{1}{\sqrt{\det(\mathscr{D}^{-1}\mathscr{C})}} \sum_{\mathbf{n} = (n_0, n_1, \ldots, )} \sum_{\Gamma \in G'(\mathbf{n})} \frac{\hbar^{-\chi_{\Gamma}}}{|\Aut(\Gamma)|} F_{\Gamma}(q),\]
where, as before, $G'(\mathbf{n})$ is the set of isomorphism classes of genus-labeled Feynman diagrams, and $F_{\Gamma}(q)$ is the genus-modified Feynman amplitude given by placing the $m$-tensor
\[\left.\sum_{\substack{\mathbf{m}\\ \sum m_{\ell,\beta} = m}} \frac{m!}{\prod m_{\ell,\beta}!} \frac{\d^m \mathcal{F}_g(s)}{\prod (\d s^{\ell}_{\beta})^{m_{\ell,\beta}}}\right|_{s = \mathscr{D}^{-1}q} \cdot \prod (s^{\beta}_{\ell})^{m_{\ell,\beta}}\]
at each $m$-valent vertex of genus $g$ and taking the contraction of tensors using the bilinear form $-\mathscr{D}^{-1}\mathscr{C}$.  Here, as before, $\mathcal{F}_g$ is defined by the expansion
\[\psi(q) = e^{\sum_{g \geq 0}\hbar^{g-1} \mathcal{F}_g(q)}.\]
Note that $\det(\mathscr{D}^{-1} \mathscr{C})$ is well-defined because, although these matrices have indices ranging over an infinite indexing set, they are zero outside of a finite range.

\section{Semi-classical limit}
\label{semiclassical}

The most important feature of quantization for our purposes that it relates higher genus information to genus-zero information.  We will return to this principle in the next chapter in the specific context of Gromov-Witten theory.  Before we do so, however, let us give a precise statement of this idea in the abstract setting of symplectic vector spaces.

Let $\mathbb{H}$ be a symplectic vector space, finite- or infinite-dimensional.  Fix a polarization $\mathbb{H} = \mathbb{H}_+ \oplus \mathbb{H}_-$, and suppose that for each $g \geq 0$,
\[\mathcal{F}_X^g: \mathbb{H}_+ \rightarrow \R, \;\; \mathcal{F}_Y^g: \mathbb{H}_+ \rightarrow \R\]
are functions on $\mathbb{H}_+$.  Package each of these two collections into the total descendent potentials:
\[\mathcal{D}_X = \exp\left(\sum_{g \geq 0} \hbar^{g-1} \mathcal{F}_X^g\right), \;\;\; \mathcal{D}_Y = \exp\left(\sum_{g \geq 0} \hbar^{g-1} \mathcal{F}_Y^g \right).\]
Let $\mathcal{L}_X$ and $\mathcal{L}_Y$ be the Lagrangian subspaces of $\mathbb{H}$ that, under the identification of $\mathbb{H}$ with $T^*\mathbb{H}_+$, coincide with the graphs of $d\mathcal{F}_X^0$ and $d\mathcal{F}_Y^0$, respectively.  That is,
\[\mathcal{L}_X = \{(p,q) \;| \; p = d_q \mathcal{F}_X^0\}\subset \mathbb{H},\]
and similarly for $\mathcal{L}_Y$.

\begin{thm}
Let $T$ be a symplectic transformation such that
\[
U_T \mathcal{D}_X = \mathcal{D}_Y.
\]
Then
\[T(\mathcal{L}_X) = \mathcal{L}_Y.\]
(The passage from $D^X$ to $\mathcal{L}_X$ is sometimes referred to as a {\bf semi-classical limit}.)

\begin{proof}
We will prove this statement in the finite-dimensional setting, but all of our arguments should carry over with only notational modifications to the infinite-dimensional case.  To further simplify, it suffices to prove the claim when $T$ is of one of the three basic types considered in Section 1.2.

{\bf Case 1}: $T = \left(\begin{array}{cc} I & B\\ 0 & I \end{array}\right)$.  Using the explicit formula for $U_T$ obtained in Chapter 1, the assumption $U_T \mathcal{D}_X = \mathcal{D}_Y$ in this case can be written as
\[
\exp\left(\sum_{g \geq 0} \hbar^{g-1}\mathcal{F}_Y^g\right) = \exp\left(\frac{1}{2\hbar} B_{\alpha\beta} q^{\alpha}q^{\beta}\right)\exp\left(\sum_{g \geq 0} \hbar^{g-1} \mathcal{F}^g_X\right).
\]
Taking logarithms of both sides, picking out the coefficient of $\hbar^{-1}$, and taking derivatives with respect to $q$, we find
\begin{equation}
\label{known}
d_q \mathcal{F}_Y^0 = Bq + d_q \mathcal{F}_X^0.
\end{equation}

Now, choose a point $(\overline{p},\overline{q}) \in \mathcal{L}_X$, so that $d_{\overline{q}} \mathcal{F}_X^0 = \overline{p}$.  Explicitly, the point in question is $\overline{p}_{\alpha}e^{\alpha} + \overline{q}^{\alpha}e_{\alpha}$, so its image under $T$ is
\[\overline{p}_{\alpha}\widetilde{e^{\alpha}} + \overline{q}^{\alpha} \widetilde{e_{\alpha}} = (\overline{p} + B\overline{q})_{\alpha}e^{\alpha} + \overline{q}^{\alpha} e_{\alpha},\]
using the expressions for $\widetilde{e^{\alpha}}$ and $\widetilde{e_{\alpha}}$ in terms of the $e$ basis obtained in Chapter 1.  Thus, the statement that $T(\overline{p}, \overline{q}) \in \mathcal{L}_Y$ is equivalent to
\[d_{\overline{q}}\mathcal{F}_Y^0 = \overline{p} + B\overline{q}.\]
Since $\overline{p} = d_{\overline{q}} \mathcal{F}_X^0$ by assumption, this is precisely equation (\ref{known}).  This proves that $T(\mathcal{L}_X) \subset \mathcal{L}_Y$, and the reverse inclusion follows from the analogous claim applied to $T^{-1}$.

{\bf Case 2}: $T = \left(\begin{array}{cc} A & 0 \\ 0 & A^{-T} \end{array}\right)$.  This case is very similar to the previous one, so we omit the proof.

{\bf Case 3}: $T = \left(\begin{array}{cc} I & 0\\ C & I \end{array}\right)$.  Consider the Feynman diagram expression for $U_T\mathcal{D}_X$ obtained in Section 1.5.  Up to a constant factor, the assumption $U_T \mathcal{D}_X = \mathcal{D}_Y$ in this case becomes
\begin{equation}
\label{Feyn}
\exp\left(\sum_{g \geq 0} \hbar^{g-1} \mathcal{F}_Y^g(q)\right) = \sum_{\Gamma} \frac{\hbar^{-\chi_{\Gamma}}}{|\text{Aut}(\Gamma)|} F_{\Gamma}^X(q),
\end{equation}
where $F_{\Gamma}^X(q)$ is the Feynman amplitude given by placing the $m$-tensor
\[\sum_{|\mathbf{m}|= m!} \frac{1}{m_1! \cdots m_n!}\left. \frac{\d^m \mathcal{F}_g^X}{(\d q_1')^{m_1} \cdots (\d q_n')^{m_n}}\right|_{q' = q} (q_1')^{m_1} \cdots (q_n')^{m_n}\]
at each $m$-valent vertex of genus $g$ in $\Gamma$ and taking the contraction of tensors using the bilinear form $-C$.

Recall that if one takes the logarithm of a sum over Feynman diagrams, the result is a sum over connected graphs only, and in this case, $-\chi_{\Gamma} = g-1$.  Thus, if one takes the logarithm of both sides of (\ref{Feyn}) and picks out the coefficient of $\hbar^{-1}$, the result is
\[
\mathcal{F}_Y^0(q) = \sum_{\Gamma \text{ connected, genus } 0} \frac{F_{\Gamma}(q)}{|\text{Aut}(\Gamma)|}.
\]

Let us give a more explicit formulation of the definition of $F_{\Gamma}(q)$.  For convenience, we adopt the notation of Gromov-Witten theory and write
\[\mathcal{F}_0^X(q) = \langle \; \rangle + \langle e_{\alpha} \rangle^X q^{\alpha} + \frac{1}{2}\langle e_{\alpha}, e_{\beta} \rangle^X q^{\alpha}q^{\beta} + \cdots = \langle \langle \; \rangle \rangle^X(q),\]
where $\langle \langle \phi_1, \ldots, \phi_n \rangle \rangle^X(q) := \sum_{k \geq 0} \frac{1}{k!} \langle \phi_1, \ldots, \phi_n, q, \ldots, q \rangle^X$ ($k$ copies of $q$) and the brackets are defined by the above expansion.  Then derivatives of $\mathcal{F}^0_X$ are given by adding insertions to the double bracket.  It follows that
\[F_{\Gamma}^X(q) = \sum_{\{i_h\}} \prod_{v \in V(\Gamma)} \left\langle \left\langle \prod_{h \in H(v)} e_{i_h} \right\rangle\right\rangle^X(q) \prod_{e = (a,b) \in E(\Gamma)}(-C^{i_a i_b}).\]
Here, $V(\Gamma)$ and $E(\Gamma)$ denote the vertex sets and edge sets of $\Gamma$, respectively, while $H(v)$ denotes the set of half-edges associated to a vertex $v$.  The summation is over all ways to assign an index $i_h \in \{1, \ldots, d\}$ to each half-edge $h$, where $d$ is equal to the dimension of $\mathbb{H}_+$.  For an edge $e$, we write $e = (a,b)$ if $a$ and $b$ are the two half-edges comprising $e$.

Thus, we have re-expressed the relationship between $\mathcal{F}_0^Y$ and $\mathcal{F}_0^X$ as
\[\mathcal{F}_0^Y(q) = \sum_{\Gamma \text{ connected, genus } 0} \frac{1}{|\text{Aut}(\Gamma)|}  \sum_{\{i_h\}} \prod_{v \in V(\Gamma)} \left\langle \left\langle \prod_{h \in H(v)} e_{i_h} \right\rangle\right\rangle^X(q) \prod_{e = (a,b) \in E(\Gamma)}(-C^{i_a i_b}).\]

Now, to prove the claim, choose a point $(\overline{p}, \overline{q}) \in \mathcal{L}_Y$.  We will prove that $T^{-1}(\overline{p}, \overline{q}) \in \mathcal{L}_X$.  Applying the same reasoning used in Case 1 to the inverse matrix $T^{-1} = \left(\begin{array}{cc} I & 0 \\ -C & I \end{array}\right)$, we find that
\[T^{-1}(\overline{p}, \overline{q}) = \overline{p}_{\alpha}e^{\alpha} + (-C \overline{p} + \overline{q})^{\alpha} e_{\alpha}.\]
Therefore, the claim is equivalent to
\[d_{\overline{q}} \mathcal{F}_0^Y = d_{-Cd_{\overline{q}} \mathcal{F}_0^Y + \overline{q}} \mathcal{F}_0^X.\]

From the above, one finds that the $i$th component of the vector $d_{\overline{q}}\mathcal{F}_Y^0$ is equal to
\[ \sum_{\Gamma} \frac{1}{|\text{Aut}(\Gamma)|}  \sum_{\{i_h\}, w \in V(\Gamma)} \left\langle\left\langle e_i \prod_{h \in H(w)} e_{i_h} \right\rangle\right\rangle^X(\overline{q}) \prod_{v \neq w \in V(\Gamma)} \left\langle \left\langle \prod_{h \in H(v)} e_{i_h} \right\rangle\right\rangle^X(\overline{q}) \prod_{e = (a,b) \in E(\Gamma)}(-C^{i_a i_b}).\]
On the other hand, the same equation shows that the $i$th component of the vector $d_{-Cd_{\overline{q}}\mathcal{F}_0^Y + \overline{q}} \mathcal{F}_X^0$ is equal to
\begin{align*}
&\langle \langle e_i \rangle \rangle^X(-Cd_{\overline{q}}\mathcal{F}_0^Y + \overline{q})\\
=&\langle\langle e_i \rangle \rangle^X\left(\sum_{\Gamma} \frac{1}{|\text{Aut}(\Gamma)|}  \sum_{\{i_h\}, w \in V(\Gamma),j,k} \left\langle\left\langle e_j \prod_{h \in H(w)} e_{i_h} \right\rangle\right\rangle^X(\overline{q}) \prod_{v \neq w \in V(\Gamma)} \left\langle \left\langle \prod_{h \in H(v)} e_{i_h} \right\rangle\right\rangle^X(\overline{q})\right.\\
&\hspace{5cm}\left.\prod_{e = (a,b) \in E(\Gamma)}(-C^{i_a i_b})(-C)^{kj}e_k + q^{\ell}e_{\ell}\right)\\
&=\sum_{\Gamma_1, \ldots, \Gamma_n} \frac{1}{n! |\text{Aut}(\Gamma_1)| \cdots |\text{Aut}(\Gamma_n)|} \sum_{\substack{\{i_h\}\\w_1 \in V(\Gamma_1), \ldots, w_n \in V(\Gamma_n)\\j_1, \ldots, j_n\\k_1, \ldots, k_n\\\ell_1, \ldots, \ell_m}} \prod_{c=1}^n \left\langle \left\langle  e_{j_c}\prod_{h \in H(w_c)} e_{i_h}\right\rangle\right\rangle^X(\overline{q})\\
&\hspace{0.5cm} \prod_{v \neq w_1, \ldots, w_n} \left\langle \left\langle \prod_{h \in H(v)} e_{i_h} \right\rangle\right\rangle^X(\overline{q})\prod_{e=(a,b)} (-C^{i_ai_b}) (-C)^{k_c j_c} \frac{\langle e_i, e_{k_1}, \ldots, e_{k_n}, q^{\ell_1}e_{\ell_1}, \ldots, q^{\ell_m}e_{\ell_m}\rangle}{m!}
\end{align*}

\begin{align*}
&=\sum_{\Gamma_1, \ldots, \Gamma_n} \frac{1}{n! |\text{Aut}(\Gamma_1)| \cdots |\text{Aut}(\Gamma_n)|} \sum_{\substack{\{i_h\}\\w_1 \in V(\Gamma_1), \ldots, w_n \in V(\Gamma_n)\\j_1, \ldots, j_n\\k_1, \ldots, k_n}} \prod_{c=1}^n \left\langle \left\langle  e_{j_c}\prod_{h \in H(w_c)} e_{i_h}\right\rangle\right\rangle^X(\overline{q})\\
&\hspace{2cm} \prod_{v \neq w_1, \ldots, w_n} \left\langle \left\langle \prod_{h \in H(v)} e_{i_h} \right\rangle\right\rangle^X(\overline{q})\prod_{e=(a,b)} (-C^{i_ai_b}) (-C)^{k_c j_c}\langle \langle e_i, e_{k_1}, \ldots, e_{k_n}\rangle\rangle(\overline{q})
\end{align*}
Upon inspection, this is equal to the sum of all ways of starting with a distinguished vertex (where $e_i$ is located) and adding $n$ spokes labeled $k_1, \ldots, k_n$, then attaching $n$ graphs $\Gamma_1, \ldots, \Gamma_n$ to this vertex via half-edges labeled $j_1, \ldots, j_n$.  This procedures yields all possible graphs with a distinguished vertex labeled by $e_i$---the same summation that appears in the expression for $d_{\overline{q}}\mathcal{F}_0^Y$.  Each total graph appears in multiple ways, corresponding to different ways of partitioning it into subgraphs labeled $\Gamma_1, \ldots, \Gamma_n$.  However, it is a combinatorial exercise to verify that, with this over-counting, the automorphism factor in front of each graph $\Gamma$ is precisely $\frac{1}{|\text{Aut}(\Gamma)|}$.

Thus, we find that $d_{-Cd_{\overline{q}}\mathcal{F}_0Y + \overline{q}}\mathcal{F}_0^X = d_{\overline{q}} \mathcal{F}_0^Y$, as required.
\end{proof}
\end{thm}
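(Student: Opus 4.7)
The plan is to work in the finite-dimensional setting (the infinite-dimensional argument is the same modulo notational changes) and exploit the factorization of any symplectic $T$ into the three elementary types used in Chapter~1, namely purely upper-triangular ($T = \left(\begin{smallmatrix}I & B\\ 0 & I\end{smallmatrix}\right)$), block-diagonal ($T = \left(\begin{smallmatrix}A & 0\\ 0 & A^{-T}\end{smallmatrix}\right)$), and purely lower-triangular ($T = \left(\begin{smallmatrix}I & 0\\ C & I\end{smallmatrix}\right)$). Since $U_{T_1 T_2} = U_{T_1} U_{T_2}$ up to a scalar (which is killed by $d_q$ after taking logarithms) and Lagrangian transport is functorial, it suffices to treat the three cases separately.

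For the upper-triangular case I would apply the formula $(U_T \psi)(q) = \exp\bigl(\tfrac{1}{2\hbar} B_{\alpha\beta} q^\alpha q^\beta\bigr)\psi(q)$ directly to $\mathcal{D}_X$, take logarithms, pick out the coefficient of $\hbar^{-1}$, and differentiate in $q$ to obtain $d_q \mathcal{F}^0_Y = Bq + d_q \mathcal{F}^0_X$. Comparing with the explicit action of $T$ on a point $(\overline p,\overline q) \in \mathcal{L}_X$ in the $\tilde e$ basis yields $T(\overline p,\overline q) \in \mathcal{L}_Y$. The diagonal case is entirely analogous, using $(U_T\psi)(q) = \psi(A^T q)$ up to a scalar and a chain-rule computation.

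The main work, and the main obstacle, is the lower-triangular case $T = \left(\begin{smallmatrix}I & 0\\ C & I\end{smallmatrix}\right)$. Here $U_T$ acts by the differential operator $\exp(-\tfrac{\hbar}{2} C^{\alpha\beta}\partial_\alpha\partial_\beta)$, whose Feynman diagram expansion (from Section~\ref{ltc}) gives
\[
U_T \mathcal{D}_X = \sum_{\Gamma} \frac{\hbar^{-\chi_\Gamma}}{|\Aut(\Gamma)|} F^X_\Gamma(q).
\]
Taking logarithms restricts the sum to connected graphs, and extracting $\hbar^{-1}$ restricts further to genus-zero connected graphs, for which $-\chi_\Gamma = -1$. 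The identification $U_T \mathcal{D}_X = \mathcal{D}_Y$ therefore translates into
\[
\mathcal{F}^0_Y(q) = \sum_{\Gamma \text{ conn., genus } 0} \frac{1}{|\Aut(\Gamma)|} F^X_\Gamma(q),
\]
where each vertex carries a tensor of derivatives of $\mathcal{F}^0_X$ at $q$ and each edge contributes $-C$. Pick $(\overline p,\overline q)\in\mathcal{L}_Y$; applying $T^{-1}$ sends it to $(\overline p,\, -C\overline p + \overline q)$, so the claim $T^{-1}(\overline p,\overline q)\in \mathcal{L}_X$ is the identity
\[
d_{\overline q}\mathcal{F}^0_Y = d_{-C\, d_{\overline q}\mathcal{F}^0_Y + \overline q}\mathcal{F}^0_X.
\]

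The hard part is the combinatorial verification of this identity. The plan is to differentiate the left-hand side once, producing a sum over connected genus-zero graphs with one distinguished vertex carrying an extra insertion $e_i$. On the right-hand side, I would expand $d_{-C\, d\mathcal{F}^0_Y + \overline q}\mathcal{F}^0_X$ as a Taylor series in the shift $-C\, d_{\overline q}\mathcal{F}^0_Y$, recursively substitute the graph-sum formula for $d\mathcal{F}^0_Y$, and regroup: each of the $n$ copies of $d\mathcal{F}^0_Y$ becomes a rooted subtree glued by a new edge (carrying $-C$) to the distinguished vertex labeled $e_i$. This yields every connected genus-zero graph with a distinguished vertex, but each appears multiple times according to how one decomposes it into the subtrees $\Gamma_1,\dots,\Gamma_n$; the key combinatorial lemma to verify is that the prefactor $\frac{1}{n!\,|\Aut(\Gamma_1)|\cdots|\Aut(\Gamma_n)|}$ summed over all such decompositions collapses to $\frac{1}{|\Aut(\Gamma)|}$ via an orbit–stabilizer argument on the set of rooted-edge labelings. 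Once that combinatorial bookkeeping is in place, the two sides match tensor-by-tensor, and the reverse inclusion $T(\mathcal{L}_X)\supseteq\mathcal{L}_Y$ follows by applying the same argument to $T^{-1}$.
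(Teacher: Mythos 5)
Your proposal follows essentially the same route as the paper's proof: reducing to the three elementary block types, handling the upper-triangular and block-diagonal cases via the explicit quantization formulas and a coefficient-of-$\hbar^{-1}$ extraction, and handling the lower-triangular case by Feynman-diagram expansion, logarithm, restriction to connected genus-zero (tree) graphs, and the same regrafting/orbit-stabilizer combinatorics to show $d_{\overline q}\mathcal{F}^0_Y = d_{-C\, d_{\overline q}\mathcal{F}^0_Y + \overline q}\mathcal{F}^0_X$. The only differences are cosmetic (e.g.\ you note explicitly that the contributing graphs are trees), so nothing further to add.
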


\chapter{Applications of quantization to Gromov-Witten theory} \label{GW}

In this final chapter, we will return to the situation in which $\mathbb{H} = H^*(X; \Lambda)((z^{-1}))$ for $X$ a projective variety.  In Section \ref{equations}, we show that many of the basic equations of Gromov-Witten theory can be expressed quite succinctly as equations satisfied by the action of a quantized operator on the total descendent potential.  More strikingly, according to Givental's conjecture, there is a converse in certain special cases to the semi-classical limit statement explained in Section \ref{semiclassical}; we discuss this in Section \ref{Givental} below.  In Section \ref{twistedtheory}, we briefly outline the machinery of twisted Gromov-Witten theory developed by Coates and Givental.  This is a key example of the way quantization can package complicated combinatorics into a manageable formula.

Ultimately, these notes only scratch the surface of the applicability of the quantization machinery to Gromov-Witten theory.  There are many other interesting directions in this vein, so we conclude the book with a brief overview of some of the other places in which quantization arises.  The interested reader can find much more in the literature.

\section{Basic equations via quantization}
\label{equations}

Here with give a simple application of quantization as a way to rephrase some of the axioms of Gromov-Witten theory.  This section closely follows Examples 1.3.3.2 and 1.3.3.3 of \cite{Coates}.

\subsection{String equation}

Recall from (\ref{trunc}) that the string equation can be expressed as follows:
\begin{align*}
\sum_{g,n,d} \frac{Q^d\hbar^{g-1}}{(n-1)!}\langle 1, \mathbf{t}(\psi), \ldots, \mathbf{t}(\psi)\rangle_{g,n,d}^X &= \sum_{g,n,d}  \frac{Q^d\hbar^{g-1}}{(n-1)!}\left\langle \left[ \frac{\mathbf{t}(\psi)}{\psi}\right]_+ \hspace{-0.25cm},\mathbf{t}(\psi), \ldots, \mathbf{t}(\psi) \right\rangle_{g,n,d}^X\\
&+\frac{1}{2\hbar}\langle t_0, t_0 \rangle.
\end{align*}
Applying the dilaton shift (\ref{dilatonshift}), this is equivalent to
\[-\frac{1}{2\hbar}(q_0, q_0) - \sum_{g,n,d} \frac{Q^d\hbar^{g-1}}{(n-1)!} \left\langle \left[ \frac{\mathbf{q}(\psi)}{\psi}\right]_+, \mathbf{t}(\psi), \ldots, \mathbf{t}(\psi) \right\rangle^X_{g,n,d}=0.\]
From Example \ref{Bzm} with $m=-1$, one finds
\begin{align*}
\widehat{\frac{1}{z}} &= -\frac{1}{2\hbar}\langle q_0, q_0\rangle - \sum_k q^{\alpha}_{k+1} \frac{\d}{\d q^{\alpha}_k}\\
&=-\frac{1}{2\hbar} \langle q_0, q_0 \rangle - \d_{1/z}.
\end{align*}
Thus, applying (\ref{qplus}), it follows that the string equation is equivalent to
\[\widehat{\frac{1}{z}} \mathcal{D}_X = 0.\]

%We also expressed this in the following differential equations
%\[
%\frac{\partial \F_X^g}{\partial t_0^1}=\sum_{i,j}t^i_{j+1}\frac{\partial \F_X^g}{\partial t_j^i}+\sum_{k,l}\delta_{0,g}t_0^k g_{kl}t_0^l,
%\]
%which after applying thle Dilaton shift and combining all of the equations into one single potential function we obtain
%\[
%\left(\sum_{i,j}t^i_{j+1}\frac{\partial }{\partial t_j^i}+\sum_{k,l}\delta_{0,g}t_0^k g_{kl}t_0^l\right)\D_X=0.
%\]
%
%From example \ref{Bzm} with $m=-1$, we see that this is in fact 
%\[
%\widehat{\left(\frac{1}{z}\right)} \mathcal{D}_X = 0.
%\]

%We obtain
%\[-\frac{1}{2\hbar}q_0^{\alpha}g_{\alpha\beta}q_0^{\beta} - \sum_{g,n,d} \frac{Q^d}{\hbar^{g-1}}{(n-1)!} \left\langle \left[\frac{\mathbf{t}(\psi) - \psi}{\psi}\right]_+, \mathbf{t}(\psi), \ldots, \mathbf{t}(\psi) \right\rangle_{g,n,d}^X = 0.\]
%The summation here is equal to
%\[\sum_{g,n,d} \frac{Q^d\hbar^{g-1}}{n!}\cdot n \left\langle \left[\frac{\mathbf{q}(\psi)}{\psi}\right]_+, \mathbf{t}(\psi), \ldots, \mathbf{t}(\psi)\right\rangle_{g,n,d}^X.\]
%Using Equation (\ref{qplus}) and the Leibniz rule for the differential operator $\d_{1/z}$ defined by (\ref{dA}), this can be expressed as $\d_{1/z}\mathcal{D}_X$.  Thus, we have shown that the string equation is equivalent to
%\[-\frac{1}{2\hbar}q_0^{\alpha}g_{\alpha\beta}q_0^{\beta} - \d_{1/z} \mathcal{D}_X = 0,\]
%or in other words, by Example \ref{Bzm}, to
%\[\widehat{\left(\frac{1}{z}\right)} \mathcal{D}_X = 0.\]

\subsection{Divisor equation}

In a similar fashion, the divisor equation can be expressed in terms of a quantized operator.  Summing over $g,n,d$ and separating the exceptional terms, equation (\ref{divisoreqn}) can be stated as
%\begin{align*}
%\langle \mathbf{t}(\psi), \ldots, \mathbf{t}(\psi), \rho\rangle_{g,n+1,d}^X = %&(\rho, d) \langle \mathbf{t}(\psi), \ldots, \mathbf{t}(\psi)\rangle_{g,n,d}^X\\
% &+ (n)\left\langle \left[\frac{\rho\mathbf{t}(\psi)}{\psi}\right]_+, \mathbf{t}%(\psi), \ldots, \mathbf{t}(\psi)\right\rangle_{g,n,d}^X.
%\end{align*}
\begin{align*}
\sum_{g,n,d}\frac{Q^d\hbar^{g-1}}{(n-1)!} \langle \mathbf{t}(\psi), \ldots, \mathbf{t}(\psi), \rho\rangle_{g,n,d}^X
&=\sum_{g,n,d} \frac{Q^d\hbar^{g-1}}{n!} (\rho,d) \langle \mathbf{t}(\psi), \ldots, \mathbf{t}(\psi)\rangle_{g,n,d}\\
\hspace{4.5cm}&+\sum_{g,n,d} \frac{Q^d\hbar^{g-1}}{(n-1)!}\left\langle \left[ \frac{\rho \mathbf{t}(\psi)}{\psi}\right]_+, \mathbf{t}(\psi), \ldots, \mathbf{t}(\psi) \right\rangle_{g,n,d}^X
\end{align*}
\begin{equation}
\label{div}
\hspace{5cm}+\frac{1}{2\hbar}\langle \mathbf{t}(\psi), \mathbf{t}(\psi), \rho\rangle_{0,3,0} + \langle \rho \rangle_{1,1,0}.
\end{equation}

The left-hand side and the second summation on the right-hand side combine to give
\[
- \sum_{g,n,d} \frac{Q^d\hbar^{g-1}}{(n-1)!} \left\langle \left[\frac{\rho(\mathbf{t}(\psi) - \psi)}{\psi}\right]_+, \mathbf{t}(\psi), \ldots, \mathbf{t}(\psi) \right\rangle_{g,n,d}^X.
\]
After the dilaton shift, this is equal to $\d_{\rho/z}(\sum \hbar^{g-1} \mathcal{F}_g^X)$.

As for the first summation, let $\tau_1, \ldots, \tau_r$ be a choice of basis for $H_2(X;\Z)$, which yields a set of generators $Q_1, \ldots, Q_r$ for the Novikov ring.  Write
\[\rho = \sum_{i=1}^r \rho_i \tau^i\]
in the dual basis $\{\tau^i\}$ for $\{\tau_i\}$.  Then the first summation on the right-hand side of (\ref{div}) is equal to
\[
\sum_{i=1}^r \rho_i Q_i \frac{\d}{\d Q_i} \left(\sum_g\hbar^{g-1} \mathcal{F}_g^X\right).
\]

The first exceptional term is computed as before:
\[\langle \mathbf{t}(\psi), \mathbf{t}(\psi), \rho\rangle_{0,3,0} = (q_0\rho, q_0).\]
The second exceptional term is more complicated in this case.  We require the fact that
\[\M_{1,1}(X, 0) \cong X \times \M_{1,1},\]
and that under this identification
\[[\M_{1,1}(X,d)]^{\text{vir}} = e(T_X \times \mathcal{L}_1^{-1}) = e(T_X) - \psi_1 c_{D-1}(T_X),\]
where $\mathcal{L}_1$ is the cotangent line bundle (whose first Chern class is $\psi_1$) and $D = \text{dim}(X)$.  Thus,
\[
\langle \rho \rangle_{1,1,0}^X = \int_X e(T_X)\cdot \rho - \int_{\M_{1,1}} \psi_1 \int_X c_{D-1}(T_X) \cdot \rho = -\frac{1}{24} \int_X c_{D-1}(T_X) \cdot \rho.
\]

Putting all of these pieces together, we can express the divisor equation as
\begin{align*}
-\frac{1}{2\hbar}(q_0\rho, q_0) - \d_{\rho/z} &\left(\sum_g\hbar^{g-1} \mathcal{F}_g^X\right)\\
&= \left(\sum_i \rho_i Q_i \frac{\d}{\d Q_i} - \frac{1}{24}\int_X c_{D-1}(T_X) \cdot \rho\right)\left(\sum_g\hbar^{g-1} \mathcal{F}_g^X\right),
\end{align*}
or in other words, as
\[
\widehat{\left(\frac{\rho}{z}\right)}\cdot\D_X = \left(\sum_i \rho_i Q_i \frac{\d}{\d Q_i} - \frac{1}{24}\int_X c_{D-1}(T_X) \cdot \rho\right)\D_X.
\]
It should be noted that the left-hand side of this equality makes sense because multiplication by $\rho$ is a self-adjoint linear transformation on $H^*(X)$ under the Poincar\'e pairing, and hence multiplication by $\rho/z$ is an infinitesimal symplectic transformation.

\section{Givental's conjecture}
\label{Givental}

The material in this section can be found in \cite{GiventalSemiSimple} and \cite{Lee}.

Recall from Section \ref{axiomatic} that an axiomatic genus zero theory is a symplectic vector space $\mathbb{H} = H((z^{-1}))$ together with a formal function $G_0(\mathbf{t})$ satisfying the differential equations corresponding to the string equation, dilaton equation, and topological recursion relations in genus zero.

The {\bf symplectic} (or {\bf twisted}) {\bf loop group} is defined as the set $\{M(z)\}$ of $\text{End}(H)$-valued formal Laurent series in $z^{-1}$ satisfying the symplectic condition $M^*(-z)M(z) = I$.  There is an action of this group on the collection of axiomatic genus zero theories.  To describe the action, it is helpful first to reformulate the definition of an axiomatic theory in a more geometric, though perhaps less transparent, way. 

Associated to an axiomatic genus zero theory is a Lagrangian subspace
\[\mathcal{L} = \{(\mathbf{p}, \mathbf{q}) \; | \; \mathbf{p} = d_{\mathbf{q}} G_0 \} \subset \mathbb{H},\]
where $(\mathbf{p}, \mathbf{q})$ are the Darboux coordinates on $\mathbb{H}$ defined by (\ref{darboux}) and we are identifying $\mathbb{H} \cong T^*\mathbb{H}_+$ by way of this polarization.  Note, here, that $G_0(\mathbf{t})$ is identified with a function of $\mathbf{q} \in \mathbb{H}_+$ via the dilaton shift, and that this is the same Lagrangian subspace discussed in Section \ref{semiclassical}.

According to Theorem 1 of \cite{GiventalFrobenius}, a function $G_0(\mathbf{t})$ satisfies the requisite differential equations if and only if the corresponding Lagrangian subspace $\mathcal{L}$ is a Lagrangian {\it cone} with the vertex at the origin satisfying
\[\mathcal{L} \cap T_{\mathbf{f}} \mathcal{L} = z T_{\mathbf{f}} \mathcal{L}\]
for each $\mathbf{f} \in \mathcal{L}$.

The symplectic loop group can be shown to preserve these properties.  Thus, an element $T$ of the symplectic loop group acts on the collection of axiomatic theories by sending the theory with Lagrangian cone $\mathcal{L}$ to the theory with Lagrangian cone $T(\mathcal{L})$.

There is one other equivalent formulation of the definition of axiomatic genus-zero theories, in terms of abstract Frobenius manifolds.  Roughly speaking, a Frobenius manifold is a manifold equipped with a product on each tangent space that gives the tangent spaces the algebraic structure of Frobenius algebras.  A Frobenius manifold is called {\bf semisimple} if, on a dense open subset of the manifold, these algebras are semisimple.  This yields a notion of semisimplicity for axiomatic genus zero theories.  Given this, we can formulate the statement of the symplectic loop group action more precisely:

\begin{thm} \cite{GiventalFrobenius}
The symplectic loop group acts on the collection of axiomatic genus-zero theories.  Furthermore, the action is transitive on the semisimple theories of a fixed rank $N$.
\end{thm}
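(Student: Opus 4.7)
The plan is to split the theorem into its two assertions and handle them separately. For the well-definedness of the action, I would check that if $T$ is any element of the twisted loop group and $\mathcal{L}\subset\mathbb{H}$ is an overruled Lagrangian cone with vertex at the origin, then $T(\mathcal{L})$ has the same properties. Linearity of $T$ takes care of the cone and origin-preserving conditions, and the relation $T^*(-z)T(z)=I$ says exactly that $T$ is a symplectic linear isomorphism of $\mathbb{H}$, so Lagrangian subspaces go to Lagrangian subspaces. The only nontrivial point is that the overruled condition $\mathcal{L}\cap T_f\mathcal{L} = zT_f\mathcal{L}$ is preserved; this follows because an $\End(H)$-valued Laurent series in $z$ commutes with scalar multiplication by $z$ on $\mathbb{H}$, so that $T(zT_f\mathcal{L}) = z\,T(T_f\mathcal{L}) = z\,T_{T(f)}T(\mathcal{L})$, and the intersection $\mathcal{L}\cap T_f\mathcal{L}$ transforms under the bijection $T$ to $T(\mathcal{L})\cap T_{T(f)}T(\mathcal{L})$.

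The second assertion, transitivity on semisimple theories of fixed rank $N$, is where all the real work lies. The strategy is to fix a reference theory and show that every semisimple rank-$N$ theory can be reached from it by a loop group element. The natural reference is the direct sum $\mathcal{L}^{N}_{\mathrm{pt}}$ of the Lagrangian cones of $N$ trivial point theories, which is explicitly computable from the elementary potential $\tfrac{1}{6}(q_0)^3+\cdots$ of a point. The key input is then the reconstruction theorem for semisimple Frobenius manifolds, due to Dubrovin and recast in loop-group language by Givental: at any semisimple point one can extract canonical coordinates $u^1,\ldots,u^N$, a constant orthogonal transition matrix $\Psi$ from the flat frame to the normalized idempotent frame, and a formal $R$-matrix $R(z) = I + R_1 z + R_2 z^2 + \cdots$ obeying $R^*(-z)R(z)=I$, such that $\mathcal{L} = R\cdot\Psi\cdot\mathcal{L}^{N}_{\mathrm{pt}}$ after the appropriate translation on the base.

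The main obstacle is establishing this factorization, which breaks into two pieces: (a) verifying that the matrix $R(z)$ produced by the isomonodromic deformation equations of the semisimple Frobenius structure actually lies in the twisted loop group, and (b) verifying that its action on $\mathcal{L}^{N}_{\mathrm{pt}}$ produces exactly the given cone $\mathcal{L}$. The first is a calculation using the Frobenius identities together with the flatness of the pencil of connections associated with the Frobenius manifold, which forces $R^*(-z)R(z)=I$ order-by-order in $z$. The second requires matching, to all orders in $z$ and in the flat coordinates, the derivatives of $G_0$ encoding $\mathcal{L}$ with the transported derivatives of the $N$-point potential; this is the analytic heart of the Dubrovin--Givental reconstruction. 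Once both are in place, transitivity is immediate: given two semisimple rank-$N$ theories with cones $\mathcal{L}_X = T_X\cdot\mathcal{L}^{N}_{\mathrm{pt}}$ and $\mathcal{L}_Y = T_Y\cdot\mathcal{L}^{N}_{\mathrm{pt}}$, the element $T_Y T_X^{-1}$ of the twisted loop group carries $\mathcal{L}_X$ to $\mathcal{L}_Y$.
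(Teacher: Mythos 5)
The paper does not actually prove this theorem: it is quoted from \cite{GiventalFrobenius} without an argument, so there is no internal proof to compare against. What you have written is a reasonable high-level outline of Givental's own proof in that reference, and a few remarks are in order.

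Your first paragraph (well-definedness) is clean and correct: linearity handles the cone/vertex conditions, the symplectic identity $T^*(-z)T(z)=I$ gives that Lagrangian subspaces map to Lagrangian subspaces, and since $T$ is $\End(H)$-valued in $z$ it commutes with multiplication by $z$, so $T(zT_f\mathcal{L}) = zT_{T(f)}T(\mathcal{L})$ and $T(\mathcal{L}\cap T_f\mathcal{L}) = T(\mathcal{L})\cap T_{T(f)}T(\mathcal{L})$ give the overruled condition at $T(f)$.

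On transitivity two points deserve more care. First, the $R$-matrix $R(z)=I+R_1 z+R_2 z^2+\cdots$ has infinitely many nonnegative powers of $z$, whereas the twisted loop group as defined in the paper consists of $\End(H)$-valued \emph{Laurent series in $z^{-1}$}, i.e.\ with only finitely many positive powers of $z$. Strictly, the group that acts transitively here is a completion allowing such $R$-type elements; this is a standard, if often unremarked-upon, enlargement, but your proof should acknowledge it rather than treat $R(z)$ as literally an element of the group already defined. Second, the clause ``after the appropriate translation on the base'' is doing a lot of unexplained work. The factorization $\mathcal{L}=R\,\Psi\,\mathcal{L}^N_{\mathrm{pt}}$ that comes out of the isomonodromic/canonical-coordinate analysis is established near a single semisimple point $\tau$, i.e.\ it matches one ruled slice $zT_f\mathcal{L}$ with a corresponding slice of $\mathcal{L}^N_{\mathrm{pt}}$. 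To conclude that the \emph{entire} cones agree, you need the reconstruction lemma that an overruled Lagrangian cone is determined by any one of its ruled subspaces (equivalently, that SE, DE, and TRR reconstruct $G_0$ from its small-phase-space restriction). Without stating that lemma, matching at a single $\tau$ does not obviously propagate to all of $\mathcal{L}$. With those two points made explicit, your sketch is faithful to Givental's argument.
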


Here, the {\bf rank} of a theory is the rank of $H$.

The genus-zero Gromov-Witten theory of a collection of $N$ points gives a semisimple axiomatic theory of rank $N$, which we denote by $H_{N}$.  The theorem implies that any semisimple axiomatic genus-zero theory $T = (\mathbf{H}, G_0)$ can be obtained from $H_N$ by the action of an element of the twisted loop group.   Via the process of Birkhoff factorization, such a transformation can be expressed as $S \circ R$ in which $S$ is upper-triangular and $R$ is lower-triangular.

\begin{df}
The {\bf axiomatic $\tau$-function} of an axiomatic theory $T$ is defined by
\[\tau^T_G = \hat{S} (\hat{R} \mathcal{D}_{N}),\]
where $S \circ R$ is the element of the symplectic loop group taking the theory $H_N$ of $N$ points to $T$, and $\mathcal{D}_N$ is the total descendent potential for the Gromov-Witten theory of $N$ points.
\end{df}

If $T$ is in fact the genus-zero Gromov-Witten theory of a space $X$, then we have two competing definitions of the higher-genus potential: $\mathcal{D}_X$ and $\tau^T_G$.  Givental's conjecture is the statement that these two agree:

\begin{conj}[Givental's conjecture \cite{GiventalSemiSimple}] \label{giventalconj}  If $T$ is the semisimple axiomatic theory corresponding to the genus-zero Gromov-Witten theory of a projective variety $X$, then $\tau^T_G = \mathcal{D}_X$.
\end{conj}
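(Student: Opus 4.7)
The plan is to exploit the semi-classical limit result of Section \ref{semiclassical}, which reduces the conjecture (in genus zero) to a tautology, and then to invoke Teleman's reconstruction theorem for semisimple cohomological field theories to handle all higher genera.

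First, I would use the definition of $\tau_G^T$ together with transitivity of the twisted loop group action on semisimple theories of rank $N$ to see that $S \circ R$ is, by construction, a symplectomorphism carrying the Lagrangian cone $\mathcal{L}_{H_N}$ of the $N$-point theory to the Lagrangian cone $\mathcal{L}_X$ of the genus-zero Gromov--Witten theory of $X$. The semi-classical limit theorem of Section \ref{semiclassical} then yields $\mathcal{L}_{\tau_G^T} = \mathcal{L}_X$, so that the $\hbar^{-1}$ coefficients of $\log \tau_G^T$ and $\log \mathcal{D}_X$ differ by at most an additive constant; matching the specializations at $\mathbf{q}=0$ pins this constant down. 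Thus the conjecture holds in genus zero essentially by definition.

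Second, I would invoke Teleman's classification: a semisimple CohFT is reconstructed uniquely from its underlying Frobenius manifold together with a canonical lower-triangular $R$-matrix determined by the asymptotic solutions of the quantum differential equation at the semisimple points. Translated into the language of these notes, this reconstruction is precisely the statement that, for \emph{any} semisimple theory $T$ obtained from $H_N$ by the symplectic loop group action $S \circ R$, the higher-genus potential is given by $\hat{S}(\hat{R}\mathcal{D}_N)$. Applying this to the Gromov--Witten side, where the input Frobenius manifold is that of $X$ and the canonical $R$-matrix is Teleman's, we obtain $\mathcal{D}_X = \hat{S}(\hat{R}\mathcal{D}_N) = \tau_G^T$. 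The remaining check is a compatibility verification: that the Birkhoff factorization $S \circ R$ arising abstractly from the loop group action matches Teleman's canonical data. This comes from comparing canonical coordinates on the Frobenius manifold with the formal solutions entering both constructions.

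The main obstacle is Teleman's reconstruction step, which is by far the deepest ingredient. Its proof requires the full $2$d semisimple TQFT machinery, including the Madsen--Weiss theorem and a delicate analysis of the stable cohomology of $\overline{\mathcal{M}}_{g,n}$; within the expository framework of these notes it can only be cited, not reproved. A secondary technical obstacle concerns convergence, as flagged in Section \ref{convergence}: the composite $\hat{S}\hat{R}$ involves an upper-triangular operator applied after a lower-triangular one, and one must verify term by term that $\hat{S}(\hat{R}\mathcal{D}_N)$ yields a well-defined element of the Fock space for the specific $S$ and $R$ produced by $X$. Finally, one should note that the semisimplicity hypothesis is essential in both steps: it is needed for transitivity of the loop group action, and it is what makes Teleman's classification apply.
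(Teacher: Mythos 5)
Your proposal is essentially the same as what the paper itself does: the paper does not prove Givental's Conjecture internally but states it as a conjecture and immediately cites Teleman's theorem (``Givental's conjecture holds for any semisimple axiomatic theory'') as its resolution. You correctly identify that the genus-zero part is essentially tautological given the semi-classical limit theorem of Section~\ref{semiclassical} and transitivity of the loop-group action (since $S\circ R$ was \emph{chosen} to carry $\mathcal{L}_{H_N}$ to $\mathcal{L}_X$), and that the entire higher-genus content resides in Teleman's reconstruction theorem, which these notes can only cite. Your flagging of the convergence issue for $\hat{S}\hat{R}$ (cf.\ Section~\ref{convergence}) and the need to match the abstract Birkhoff factorization with Teleman's canonical $R$-matrix are legitimate technical points that the notes also implicitly defer; your account is, if anything, slightly more explicit about these than the paper itself.
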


In other words, the conjecture posits that in the semisimple case, if an element of the symplectic loop group matches two genus zero theories, then its quantization matches their total descendent potentials.  Because the action of the symplectic loop group is transitive on semisimple theories, this amounts to a classificaiton of all higher-genus theories for which the genus-zero theory is semisimple.

Givental proved his conjecture in case $X$ admits a torus action and the total descendent potentials are taken to be the equivariant Gromov-Witten potentials.  In 2005 Teleman announced a proof of the conjecture in general:

\begin{thm}\cite{Teleman}
Givental's conjecture holds for any semisimple axiomatic theory.
\end{thm}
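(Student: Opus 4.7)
The plan is to recast the problem in the language of \emph{cohomological field theories} (CohFTs), namely systems of classes $\Lambda_{g,n}\colon H^{\otimes n}\to H^{\ast}(\M_{g,n})$ satisfying the standard gluing axioms along boundary strata. The Gromov--Witten theory of $X$ gives rise to such a CohFT whose degree-zero part recovers the Frobenius manifold $T$, and whose integration against monomials in $\psi$-classes reconstructs $\D_X$. Meanwhile, $\tau_G^T=\hat{S}\hat{R}\,\D_N$ is built by quantizing the loop-group element that carries the trivial Frobenius manifold (attached to the theory of $N$ points) to $T$. Reducing to a statement about CohFTs, it suffices to show that these two CohFT lifts of the same semisimple Frobenius manifold coincide.

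\textbf{Main structural step.} The core of the argument is a classification result: every semisimple CohFT is \emph{uniquely} recovered from its underlying semisimple Frobenius manifold by the action of a canonical pair $(S,R)$ in the symplectic loop group, applied to the topological (degree-zero) CohFT attached to the semisimple Frobenius algebra. Working in canonical coordinates in which the Frobenius multiplication is diagonal, I would construct the lower-triangular part $R$ as the unique formal solution of a first-order ODE determined by the Euler field and the commutation of $R$ with the canonical grading, checking the symplectic condition $R^{\ast}(-z)R(z)=I$ order by order in $z$. The upper-triangular piece $S$ is then dictated by the requirement that $SR$ match the genus-zero data of $T$.

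\textbf{Propagation to higher genus.} With the canonical $(S,R)$ in hand, the next step is to verify by induction on $(g,n)$ that the $\hat{R}\hat{S}$-transform of the point CohFT reproduces the Gromov--Witten CohFT of $X$. The key input is that, in the semisimple setting, the tautological ring of $\M_{g,n}$ contains enough relations (Getzler's, Ionel's, and more generally the relations arising from Teleman's analysis of the stable graph stratification) to pin down any CohFT lift of a fixed semisimple Frobenius manifold: a putative difference of two such lifts would define a tautological class that is forced to vanish by these relations. Finally, one matches the Givental quantization formulas for $\hat{S}$ and $\hat{R}$, obtained in Propositions~\ref{LT} and~\ref{UT}, with the corresponding CohFT operations, so that the equality of CohFTs implies the desired equality $\D_X=\tau_G^T$.

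\textbf{Where the difficulty lies.} The decisive obstacle is the classification/reconstruction theorem. Its proof requires a delicate combination of three ingredients: (i) showing that the required relations among boundary strata in $\M_{g,n}$ actually exist, which rests on Mumford-type formulas for the Hodge classes and on a careful Gysin calculus along the boundary; (ii) using semisimplicity in a genuinely essential way to diagonalize the problem and to force the vanishing of obstructions to extending an $R$-matrix from lower to higher genus; and (iii) identifying the CohFT-level action of the twisted loop group with the Fock-space action of its quantization, which demands precise bookkeeping of the cocycle that appeared in equation \eqref{cocycle}. All three ingredients are needed simultaneously, and it is their compatibility that makes the argument hard.
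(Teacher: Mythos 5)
The paper does not prove this theorem; it simply states it and cites \cite{Teleman}, so there is no in-text proof to compare against. Your proposal is therefore being judged against Teleman's actual argument, and while you correctly identify the broad strategy --- recast $\D_X$ and $\tau^T_G$ as cohomological field theories, show that a semisimple CohFT is determined by its underlying Frobenius manifold together with a canonical $R$-matrix, and then translate Givental's Fock-space quantization of $S$ and $R$ into CohFT language --- the decisive uniqueness step is not what you describe. Teleman's proof does not pin down the higher-genus extension by appealing to Getzler's, Ionel's, or boundary-stratum tautological relations on $\M_{g,n}$. Instead, the essential input is Harer stability together with the Madsen--Weiss theorem (Mumford's conjecture) identifying the stable cohomology of the mapping class group with a free polynomial algebra on the $\kappa$-classes; this is what forces any two CohFT lifts of the same semisimple Frobenius manifold to agree in the stable range, and Teleman then bootstraps to all $(g,n)$ using the CohFT gluing axioms and the Euler field grading. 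Without Madsen--Weiss your induction on $(g,n)$ has no base case and no mechanism to kill a putative difference of lifts: a difference of two CohFTs is a collection of classes compatible with gluing, but ``compatible with gluing'' alone does not force vanishing, and the named tautological relations are not strong enough in high genus.

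A second, smaller imprecision: your construction of the $R$-matrix from ``a first-order ODE determined by the Euler field'' is correct in spirit (this is the asymptotic fundamental solution at the irregular singularity $z=0$ in canonical coordinates), but it produces only a formal solution and its uniqueness requires the homogeneity constraint coming from the grading, which you should state; moreover the symplectic condition $R^*(-z)R(z)=I$ is not ``checked order by order'' as an afterthought but is forced by the skew-symmetry of the Euler field's grading operator. Finally, the roles you assign to $S$ and $R$ are swapped relative to the standard Givental--Teleman conventions: $R$ carries the higher-genus content and is fixed by the Frobenius structure and Euler field, while $S$ is the fundamental solution near $z=\infty$ encoding the calibration/genus-zero descendants. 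Correct these two points --- replace the tautological-relations argument with Madsen--Weiss stability, and tighten the $R$-matrix discussion --- and your outline matches Teleman's proof.
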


One important application of Givental's conjecture is the proof of the Virasoro conjecture in the semisimple case.  The conjecture states:

\begin{conj}
For any projective manifold $X$, there exist ``Virasoro operators" $\{\widehat{L^X_m}\}_{m\geq -1}$ satisfying the relations
\begin{equation}
\label{virasororel}
[\widehat{L^X_m}, \widehat{L^X_n}] = (m-n)\widehat{L^X_{m+n}},
\end{equation}
such that
\[\widehat{L^X_m}\mathcal{D}_X = 0\]
for all $m \geq -1$.
\end{conj}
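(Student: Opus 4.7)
The plan is to prove the conjecture in the \emph{semisimple} case by combining Theorem \ref{giventalconj} (Givental--Teleman) with the Witten--Kontsevich theorem, using the quantization formalism of Chapter \ref{infinite} to reduce the annihilation statement to one about a collection of points. The non-semisimple case I would regard as outside the reach of this strategy.

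First I would write down explicit candidates for the operators, following the Eguchi--Hori--Xiong prescription: they are quantizations of infinitesimal symplectic endomorphisms of $\mathbb{H} = H^*(X;\Lambda)((z^{-1}))$ built from $z\,d/dz$, the grading operator $\mu \in \End(H^*(X))$ (shifted cohomological degree), and quantum multiplication by $c_1(X)$. Schematically,
\[
L^X_m \;=\; z^{-1/2}\,(z\,d/dz)^{m+1}\,z^{1/2} \;+\; \text{corrections involving }\mu\text{ and }c_1(X)\star,
\]
and one checks directly in $\End(\mathbb{H})$ that these satisfy $[L^X_m, L^X_n] = (m-n) L^X_{m+n}$. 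Quantizing via the quadratic-Hamiltonian prescription of Section \ref{Ham} transfers the bracket to $\widehat{L^X_m}$, producing \eqref{virasororel} up to the cocycle $\mathcal{C}$ of \eqref{cocycle}; one verifies that the cocycle contributions cancel pairwise in the commutators $[\widehat{L^X_m},\widehat{L^X_n}]$ because of the controlled upper/lower-triangular decomposition of each $L^X_m$.

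The heart of the argument is the annihilation $\widehat{L^X_m}\D_X = 0$. By Theorem \ref{giventalconj}, we may write $\D_X = \hat{S}\hat{R}\D_N$ where $T = S\circ R$ is a factorization of the twisted loop group element carrying the $N$-point theory to the genus-zero theory of $X$. Thus the target equation is equivalent to
\[
\bigl(\hat{R}^{-1}\hat{S}^{-1}\widehat{L^X_m}\hat{S}\hat{R}\bigr)\D_N \;=\; 0.
\]
Because the Virasoro generators are canonically attached to the underlying Frobenius-manifold data, I would verify at the classical level the conjugation identity $T^{-1} L^X_m T = L^N_m$, where $L^N_m$ is the Virasoro generator of the semisimple $N$-point theory. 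Under quantization this yields $\hat{T}^{-1}\widehat{L^X_m}\hat{T} = \widehat{L^N_m}$ modulo an additive scalar that must be shown to annihilate $\D_N$. The equality $\widehat{L^N_m}\D_N = 0$ for all $m\geq -1$ then reduces, via the product structure $\D_N = \D_{\text{pt}}^{\otimes N}$, to the classical Virasoro constraints for a point, i.e.\ the Witten conjecture proved by Kontsevich.

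The hard part is the classical conjugation identity $T^{-1}L^X_m T = L^N_m$. The operators $L^X_m$ carry fractional prefactors $z^{\pm 1/2}$, the grading $\mu$, and $c_1(X)\star$, none of which is transported in an elementary way by an arbitrary element of the twisted loop group. Control over this transport must come from the fine structure of the semisimple case: in Dubrovin's canonical coordinates, the Frobenius manifold factors into one-dimensional pieces, and the $R$-component of $T$ admits a normalized asymptotic expansion intertwining the $X$-side grading with the diagonal grading on $H_N$, while the $S$-component accounts for the shift induced by $c_1(X)$. Matching these intertwiners exactly, tracking every $z^{1/2}$ through the quantization, and showing that the residual cocycle from \eqref{cocycle} reproduces precisely the central anomaly already present on the $N$-point side, is the real combinatorial and analytic burden of the proof. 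Any attempt to push beyond the semisimple regime would need new tools, since transitivity of the loop group action (and hence the reduction to points) is only available there.
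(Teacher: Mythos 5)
Your proposal follows essentially the same route the paper sketches for the semisimple case: reduce to the Gromov-Witten theory of $N$ points via Givental--Teleman, conjugate the point's Virasoro operators $\widehat{L_m}$ (built from $D = z^2\frac{d}{dz}+z$) by $\widehat{S}\,\widehat{R}$, and invoke Witten--Kontsevich to kill $\mathcal{D}_N$. What you identify as the ``heart of the argument'' --- the classical conjugation identity $T^{-1}L^X_m T = L^N_m$ and the bookkeeping of the cocycle from \eqref{cocycle} --- is exactly the content the paper compresses into its footnote that the conjugated operators must be checked to agree with the standard Virasoro operators.
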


In the case where $X$ is a collection of $N$ points, the conjecture holds by setting $\widehat{L^X_m}$ equal to the quantization of
\[L_m := -z^{-1/2}D^{m+1} z^{-1/2},\]
where
\[D:= z\left(\frac{d}{dz}\right)z = z^2\frac{d}{dz} + z.\]
The resulting operators $\{\widehat{L_m}\}_{m \geq -1}$ are the same as $N$ copies of those used in Witten's conjecture  \cite{Witten}, and the relations (\ref{virasororel}) indeed hold for these operators.

Thus, by Witten's conjecture, the Virasoro conjecture holds for any semisimple Gromov-Witten theory by setting
\[\widehat{L^X_m} = \widehat{S} (\widehat{R} \widehat{L_m} \widehat{R}^{-1}) \widehat{S}^{-1} \]
for the transformation $S \circ R$ taking the theory of $N$ points to the Gromov-Witten theory of $X$.\footnote{In fact one must check that $\widehat{L^X_m}$ defined this way agrees with the Virasoro operators of the conjecture, but this can be done.}

\section{Twisted theory}
\label{twistedtheory}

The following is due to Coates and Givental; we refer the reader to the exposition presented in \cite{Coates}.

Let $X$ be a projective variety equipped with a holomorphic vector bundle $E$.  Then $E$ induces a $K$-class on $\M_{g,n}(X,d)$,
\[E_{g,n,d} = \pi_*f^*E \in K^0(\M_{g,n}(X,d)),\]
where
\[\xymatrix{
\mathscr{C} \ar[r]^{f} \ar[d]_{\pi} & X\\
\M_{g,n}(X,d), & 
}\]
is the univeral family over the moduli space.  Consider an invertible multiplicative characteristic class
\[
c: K^0(\M_{g,n}(X,d)) \rightarrow H^*(\M_{g,n}(X,d)).
\]
Any such class can be written in terms of Chern characters
\[c(\cdot) = \exp\left(\sum_{k \geq 0} s_k \text{ch}_k(\cdot)\right),\]
for some parameters $s_k$.

A {\bf twisted Gromov-Witten invariant} is defined as
\[\langle \tau_1(\gamma_1) \cdots \tau_n(\gamma_n) ; c(E_{g,n,d}) \rangle^X_{g,n,d} = \int_{[\M_{g,n}(X, d)]^{\text{vir}}} \hspace{-1.5cm}ev_1^*(\gamma_1) \psi_1^{a_1} \cdots ev_n^*(\gamma_n) \psi_n^{a_n} c(E_{g,n,d}).\]
These fit into a twisted genus-$g$ potential $\mathcal{F}^g_{c, E}$ and a twisted total descendent potential $\mathcal{D}_{c,E}$ in just the way that the usual Gromov-Witten invariants do.

There is also a Lagrangian cone $\mathcal{L}_{c,E}$ associated to the twisted theory, but a bit of work is necessary in order to define it.  The reason for this is that the Poincar\'e pairing on $H(X; \Lambda)$ should be given by three-point correlators.  As a result, when we replace Gromov-Witten invariants by their twisted versions we must modify the Poincar\'e pairing, and hence the symplectic structure on $\mathbb{H}$, accordingly.  Denote this modified symplectic vector space by $\mathbb{H}_{c,E}$.  There is a symplectic isomorphism
\[\mathbb{H}_{c,E} \rightarrow \mathbb{H}\]
\[x \mapsto \sqrt{c(E)} x.\]
We define the Lagrangian cone $\mathcal{L}_{c,E}$ of the twisted theory by
\[\mathcal{L}_{c,E} = \sqrt{c(E)} \cdot \{ (\mathbf{p}, \mathbf{q}) \; | \; \mathbf{p} = d_{\mathbf{q}} \mathcal{F}^0_{c,E}\} \subset \mathbb{H},\]
where we use the usual dilaton shift to identify $\mathcal{F}^0_{c,E}(\mathbf{t})$ with a function of $\mathbf{q} \in (\mathbb{H}_{c,E})_+$.

The quantum Riemann-Roch theorem of Coates-Givental gives an expression for $\mathcal{D}_{c,E}$ in terms of a quantized operator acting on the untwisted Gromov-Witten descendent potential $\mathcal{D}_X$ of $X$: 

\begin{thm}\cite{Coates}
\label{twisted}
The twisted descendent potential is related to the untwisted descendent potential by
\begin{align*}
&\exp\left(-\frac{1}{24}\sum_{\ell > 0} s_{\ell -1} \int_X \text{ch}_{\ell}(E) c_{D-1}(T_X)\right)\exp\left(\frac{1}{2}\int_X e(X) \wedge \left(\sum_{j \geq 0} s_j \text{ch}_j(E)\right)\right) \mathcal{D}_{c,E}\\
=&\exp\left(\sum_{\substack{m > 0\\ \ell \geq 0}} s_{2m-1 + \ell} \frac{B_{2m}}{(2m)!} (\text{ch}_{\ell}(E)z^{2m-1})^{\wedge}\right) \exp\left(\sum_{\ell > 0} s_{\ell-1} (\text{ch}_{\ell}(E)/z)^{\wedge}\right) \mathcal{D}_X.
\end{align*}
Here $B_{2m}$ are the Bernoulli numbers.
\end{thm}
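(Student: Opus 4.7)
The natural approach is to treat the parameters $s_k$ as flow variables, derive a system of first-order partial differential equations satisfied by both sides in these variables, and check that the initial conditions at $s=0$ match (where $c(E_{g,n,d})=1$ and both sides reduce to $\D_X$). The identification of the flow itself comes from Grothendieck--Riemann--Roch applied to the forgetful morphism from the universal curve.

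Concretely, differentiating the left-hand side with respect to $s_k$ brings down an insertion of $\text{ch}_k(E_{g,n,d}) = \text{ch}_k(\pi_* f^* E)$, which by GRR equals
\[
\pi_*\bigl(\text{ch}(f^* E) \cdot \text{Td}(T_\pi)\bigr)_k,
\]
where $T_\pi$ is the relative tangent sheaf of $\pi : \mathscr{C} \to \M_{g,n}(X,d)$.  I would then analyze this class by separating the contributions of the smooth locus from those of the nodes of the universal curve.  The smooth-locus contributions, via the sections $s_i$ corresponding to the marked points, yield single-point insertions of the form $\text{ev}_i^*(\text{ch}_\ell(E)) \psi_i^j$, while the nodal contributions produce boundary insertions, one for each node, with each node carrying the two cotangent lines $\psi_+, \psi_-$ at its two branches.

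The key step is to recognize that the resulting differential operators in $\mathbf{q}$ are exactly those of the quantized operators appearing on the right.  The single-point contributions, summed over marked points, correspond via the calculation in Example~\ref{Bzm} and Example~\ref{lower-quad} to the linear part $\d_A$ of $\hat A$ for $A = \text{ch}_\ell(E) z^{m}$, producing both the $(\text{ch}_\ell(E)/z)^\wedge$ operator and the linear part of $(\text{ch}_\ell(E)z^{2m-1})^\wedge$.  The nodal contributions involve $\psi_+^a \psi_-^b$ summed against tensors from $E$; using the Todd-class expansion
\[
\text{Td}(L) = 1 + \tfrac{1}{2}c_1(L) + \sum_{m \geq 1} \frac{B_{2m}}{(2m)!} c_1(L)^{2m},
\]
together with the identity $\sum_{k=0}^{m-1}(-1)^k z_+^k z_-^{m-1-k} = (z_+^m + (-1)^{m-1} z_-^m)/(z_+ + z_-)$ used in equation \eqref{plusminus}, matches the boundary terms precisely with the quadratic part of $(\text{ch}_\ell(E) z^{2m-1})^\wedge$ described in Proposition~\ref{LT}.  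The Bernoulli numbers $B_{2m}$ enter through the Todd class, explaining their appearance in the statement.  The two exceptional prefactors on the left arise from the low-dimensional moduli spaces $\M_{0,2}(X,0)$ and $\M_{1,1}(X,0) \cong X \times \M_{1,1}$, using $\int_{\M_{1,1}} \psi_1 = \tfrac{1}{24}$, exactly as the analogous exceptional terms appeared in the derivation of the divisor equation above.

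The principal obstacle is the bookkeeping in the boundary step: one must show that summing the nodal contributions from GRR, taken across all degenerations and with the correct signs from $\psi_\pm$ expansions, reproduces the Feynman-diagram-style quadratic differential operators of Proposition~\ref{LT} and the upper-triangular formula of Proposition~\ref{UT}.  This is essentially a combinatorial identity between a geometric sum over strata of $\M_{g,n}(X,d)$ and an algebraic expansion of $\exp(\hat A)$, and it is where the precise numerics (signs, Bernoulli factors, symmetrization over $z_+, z_-$) must be verified.  Once this matching is in place, both sides of the claimed identity satisfy the same first-order system in $s$ with the same value at $s=0$, so they coincide as formal series.
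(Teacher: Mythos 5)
The paper does not prove this theorem; it cites \cite{Coates} and sketches only the starting point (apply Grothendieck--Riemann--Roch to $R\pi_* f^* E$, express $\text{Td}^{\vee}(T_\pi)$ in terms of $\psi$-classes on the strata of the moduli space, and call the rest ``a difficult combinatorial exercise''). Your proposal follows that same skeleton and supplies more structure than the paper does: treating the $s_k$ as flow variables and reducing to a first-order system with the initial condition at $s=0$, splitting the GRR pushforward into marked-point contributions (giving the linear part $\d_A$ of the quantized operator) and nodal contributions (giving the quadratic part), recognizing the Todd expansion as the source of the Bernoulli numbers, and identifying the exceptional prefactors with $\M_{0,2}(X,0)$ and $\M_{1,1}(X,0)$ via $\int_{\M_{1,1}}\psi_1=\tfrac1{24}$. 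This is the correct route and is consistent with what the paper and \cite{Coates} actually do.

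Two points to tighten. First, the paper's GRR formula uses $\text{Td}^{\vee}(T_\pi)$, not $\text{Td}(T_\pi)$. These differ exactly in the sign of the linear term: $\text{Td}(L)=1+\tfrac12 c_1(L)+\sum_{m\geq1}\tfrac{B_{2m}}{(2m)!}c_1(L)^{2m}$ while $\text{Td}^{\vee}(L)=1-\tfrac12 c_1(L)+\sum_{m\geq1}\tfrac{B_{2m}}{(2m)!}c_1(L)^{2m}$; that $\pm\tfrac12 c_1$ term is precisely what feeds the $\tfrac12\int_X e(X)\wedge(\cdot)$ prefactor, so the sign must be tracked. Second, your candid naming of the remaining obstacle --- showing that the sum of nodal GRR contributions over all boundary strata, with the $\psi_\pm$ expansions and the identity \eqref{plusminus}, reproduces the quadratic differential part of $\exp(\hat A)$ in the form of Propositions \ref{LT} and \ref{UT} --- is exactly the part the paper also leaves to the reference. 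A self-contained proof would still need to carry out that matching; as an outline, though, your proposal captures the argument faithfully.
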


The basic idea of this theorem is to write
\begin{align*}
c(E_{g,n,d}) &= \exp\left(\sum_{k \geq 0} s_k \text{ch}_k(R\pi_*f^*E)\right)\\
 &= \exp\left(\sum_{k \geq 0} s_k \big(\pi_*(\text{ch}(f^*E) \text{Td}^{\vee}(T_{\pi}))\big)_k \right),
\end{align*}
using the Grothendieck-Riemann-Roch formula.  A geometric theorem expresses $\text{Td}^{\vee}(T_{\pi})$ in terms of $\psi$ classes on various strata of the moduli space, and the rest of the proof of Theorem \ref{twisted} is a difficult combinatorial exercise in keeping track of these contributions.

Taking a semi-classical limit and applying the result discussed in Section \ref{semiclassical} of the previous chapter, we obtain:

\begin{crl}
The Lagrangian cone $\mathcal{L}_{c,E}$ satisfies
\[\mathcal{L}_{c,E} = \exp\left(\sum_{m \geq 0} \sum_{0 \leq \ell \leq D} s_{2m-1+\ell} \frac{B_{2m}}{(2m)!} \text{ch}_{\ell}(E) z^{2m-1}\right)\mathcal{L}_X.\]
\end{crl}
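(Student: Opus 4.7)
The plan is to apply the semi-classical limit theorem of Section \ref{semiclassical} to the quantum Riemann--Roch formula of Theorem \ref{twisted}. First, I would observe that the two prefactors multiplying $\mathcal{D}_{c,E}$ on the left-hand side of Theorem \ref{twisted} are scalars independent of both $\mathbf{q}$ and $\hbar$. Taking logarithms and extracting the coefficient of $\hbar^{-1}$, they contribute only an additive constant to $\mathcal{F}^0_{c,E}$, and a constant has trivial differential, so it does not affect the associated Lagrangian cone. Thus, up to a scalar factor that is inert for the semi-classical limit, Theorem \ref{twisted} reads $\hat{T}\,\mathcal{D}_X = \mathcal{D}_{c,E}$, where $\hat{T}$ is the composition of the two quantized exponentials on the right-hand side.

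Next, I would verify that the underlying classical symplectomorphism $T$ is exactly the single exponential appearing in the corollary. Each factor of $\hat{T}$ is the quantization of multiplication by an $H^*(X)$-valued Laurent series in $z$; since $H^*(X)$ is a commutative algebra, any two such multiplication operators commute as endomorphisms of $\mathbb{H}$. Hence
\begin{align*}
T &= \exp\!\left(\sum_{m\geq 1,\;\ell\geq 0} s_{2m-1+\ell}\frac{B_{2m}}{(2m)!}\,\text{ch}_\ell(E)\, z^{2m-1}\right)\exp\!\left(\sum_{\ell\geq 1} s_{\ell-1}\,\text{ch}_\ell(E)/z\right)\\
&= \exp\!\left(\sum_{m\geq 0,\;0\leq\ell\leq D} s_{2m-1+\ell}\frac{B_{2m}}{(2m)!}\,\text{ch}_\ell(E)\, z^{2m-1}\right),
\end{align*}
absorbing the second factor into the $m=0$ piece via $B_0=1$; the bound $\ell\leq D$ is automatic because $\text{ch}_\ell(E)\in H^{2\ell}(X)=0$ for $\ell>D$. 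The exponents $z^{2m-1}$ are all odd, which is precisely the parity required by (\ref{infsympl}) for each summand to be infinitesimally symplectic (multiplication by $\text{ch}_\ell(E)$ being self-adjoint with respect to the Poincar\'e pairing). The scalar ambiguity between $\exp(\hat{A})\exp(\hat{B})$ and $\widehat{\exp(A+B)}$ coming from the projective quantization cocycle is once again invisible to the cone.

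Finally, I would invoke the semi-classical limit theorem of Section \ref{semiclassical} to conclude $T(\mathcal{L}_X)=\mathcal{L}_{c,E}$. The main obstacle I anticipate is bookkeeping around the $\sqrt{c(E)}$ twist built into the definition of $\mathcal{L}_{c,E}$: the semi-classical limit in its stated form produces the graph of $d\mathcal{F}^0_{c,E}$ inside $\mathbb{H}$, and one must confirm that under the stated identification this graph is $\mathcal{L}_{c,E}$ itself rather than $\sqrt{c(E)}^{-1}\mathcal{L}_{c,E}$. This reduces to pinning down precisely how $\mathcal{D}_{c,E}$ is being interpreted as a function on $\mathbb{H}_+=(\mathbb{H}_{c,E})_+$ in Theorem \ref{twisted} (the two ambient symplectic vector spaces share the same underlying linear space but carry different symplectic forms); once that normalization is fixed, the corollary follows directly.
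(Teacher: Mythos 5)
Your proposal is correct and follows essentially the same route as the paper, which derives the corollary in one line by taking the semi-classical limit of Theorem \ref{twisted} and invoking the theorem of Section \ref{semiclassical}; the details you supply (inert scalar prefactors, commuting multiplication operators merged via $B_0=1$, infinitesimal symplecticity from self-adjointness of $\text{ch}_\ell(E)$ and odd $z$-powers) are the right ones. The $\sqrt{c(E)}$ normalization you flag is a genuine subtlety that the paper also elides; it is absorbed into the definition of $\mathcal{L}_{c,E}$ and into the way Theorem \ref{twisted} already interprets $\mathcal{D}_{c,E}$ as a function on the untwisted $\mathbb{H}_+$.
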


This theorem and its corollary are extremely useful even when one is only concerned with the genus zero statement.  For example, it is used in the proof of the Landau-Ginzburg/Calabi-Yau correspondence in \cite{ChR}.  In that context, the invariants under consideration, known as FJRW invariants, are given by twisted Gromov-Witten invariants {\it only} in genus zero.  Thus, Theorem \ref{twisted} actually says nothing about higher-genus FJRW invariants.  Nevertheless, an attempt to directly apply Grothendieck-Riemann-Roch in genus zero to obtain a relationship between FJRW invariants and untwisted invariants is combinatorially unmanageable; thus, the higher-genus statement, while not directly applicable, can be viewed as a clever device for keeping track of the combinatorics of the Grothendieck-Riemann-Roch computation.

\section{Concluding remarks}

There are a number of other places in which quantization proves useful for Gromov-Witten theory.  For example, it was shown in \cite{30}, \cite{31}, \cite{14} that relations in the so-called tautological ring, an important subring of $H^*(\M_{g,n}(X, \beta))$, are invariant under the action of the symplectic loop group.  This was used to give one proof of Givental's Conjecture in genus $g \leq 2$ (see \cite{10} and \cite{34}), and can also be used to derive tautological relations (see \cite{29}, \cite{1}, and \cite{2}).  We refer the interested reader to \cite{Lee} for a summary of these and other applications of quantization with more complete references.

\bibliographystyle{abbrv}
\nocite{*}
\bibliography{biblio}

\end{document}